\documentclass{amsart}
\usepackage{all2017} 

\usepackage{tree-commands2} 
\usepackage[colorlinks, citecolor={black}, linkcolor={{black}}, urlcolor={blue}]{hyperref} 
\usepackage{adjustbox}

\usepackage[T1]{fontenc}
\usepackage[utf8]{inputenc}
\usepackage[english]{babel}
 
\makeatletter
\newcommand{\defPartILabel}[2]{\@namedef{r@I-#1}{{#2}{1}{}{cite.dv20}{}}}
\defPartILabel{section-proof-main-result}{5.4}
\defPartILabel{lemma-formula-rphi}{13}
\defPartILabel{lemma-riemann-hurwitz}{15}
\defPartILabel{lemma-combinatorial-type-tmor}{24}
\defPartILabel{lemma-dangling-no-glue}{53}
\defPartILabel{lem-rphi-nd}{54}
\defPartILabel{cor-no-return}{56}
\defPartILabel{prop-local}{57}
\defPartILabel{lemma-pass-once}{60}
\defPartILabel{prop-adm-matrix}{61}
\defPartILabel{lemma-limit-dangling-no-glue}{66}
\defPartILabel{lemma-limit-matrix-change}{69}
\defPartILabel{lemma-limit-full-rank}{70}
\defPartILabel{lemma-limit-pass-once}{71}
\defPartILabel{lemma-limit-no-return}{73}
\defPartILabel{lemma-loop-bridge}{79}
\defPartILabel{lemma-vertices-in-GqA0}{87}
\defPartILabel{subsection-case-work}{6.7}
\defPartILabel{sub-deg2nd3}{6.7.4}
\defPartILabel{eq-star}{{$\star$}}
\makeatother

\usepackage{csquotes}

\DeclareMathAlphabet{\mathpzc}{OT1}{pzc}{m}{it}

\calclayout

\title[Catalan-many tropical morphisms to trees]{Catalan-many tropical morphisms to trees;
	\\ \small{Part II: A space and a count}}
\author{Alejandro Vargas}

\begin{document}

\begin{abstract}
In their work on Brill--Noether theory, Eisenbud and Harris established the geometry of the universal parameter space of linear series over curves, proving that for even genus $g$ and degree $d = g/2 + 1$, the projection to the moduli space of curves is a finite cover of degree equal to the Catalan number $C_{g/2} = \frac{1}{g/2+1}\binom{g}{g/2}$. In this paper, we construct the tropical counterpart of this universal family: a polyhedral cone complex $\TM d g$ parametrizing degree-$d$ tropical morphisms from genus-$g$ metric graphs to metric trees. For even $g$ and $d = g/2 + 1$, we prove that the forgetful projection $\Pi \colon \TM d g \to \MTrop g$ is a branched cover of degree $C_{g/2}$ equipped with natural determinantal multiplicities. We compute this degree by showing that on caterpillars of loops the morphisms are in bijection with ballot sequences, and we establish its global invariance across $\MTrop g$ via a tropical balancing condition across codimension-$1$ walls. Via deformation and path lifting, this yields an effective method to construct Catalan-many gonality-witnessing maps for any generic metric graph, establishing that the tree gonality of any genus-$g$ metric graph is at most $\lceil g/2 \rceil + 1$.
\end{abstract}

\maketitle

\section*{Introduction} 
We investigate \emph{tree gonality} of metric graphs. We envision the outcome as a rank-1 tropical version of Theorem~1 in \cite{eh87}. Fix natural numbers $g$, $r$, and $d$, and define the \emph{Brill--Noether number} as $\rho(g, r, d) = g - (r+1)(g-d+r)$. By the Brill--Noether theorem~\cite{gh80}, the variety $G^r_d(X)$ of linear series of degree $d$ and dimension~$r$ on a smooth genus-$g$ curve $X$ with general moduli is smooth of dimension~$\rho$. Let $C(g,r,d)$ be the number of points in $G^r_d(X)$ when $\rho = 0$. Castelnuovo~\cite{cas89} predicted via a classical count that for $r=1$ and even genus $g=2g'$, this number is the $g'$-th Catalan number $C_{g'} = \frac{1}{g'+1}\binom{2g'}{g'}$. More generally, Kempf~\cite{kem71} and Kleiman--Laksov~\cite{kl72} established the determinantal degree formula
\begin{align} \label{eq:kempf-kleiman-laksov}
C(g,r,d) = g! \prod_{i=0}^r \dfrac{i!}{(g-d+r+i)!}.
\end{align}
Theorem~1 in \cite{eh87} asserts the existence of smooth families $\pi \colon \mathfrak X \to B$ of smooth genus-$g$ curves such that $\mathcal{G}^r_d(\mathfrak X/B)$ is smooth and connected, and the fibers of the natural map $\mathcal{G}^r_d(\mathfrak X/B) \to B$ have dimension $\rho$, with degree given by $C(g,r,d)$ when $\rho = 0$. Recall that basepoint-free linear series of degree $d$ and dimension~$r$ on a smooth curve~$X$ correspond to morphisms $X \to \PP^r$. In essence, this classical result describes the geometry of a universal parameter space of morphisms.

Here we construct the tropical analogue of this universal family for pencils of degree $d = g' + 1$ on metric graphs of even genus $g = 2g'$: a generalized cone complex $\TM d g$ parametrizing tropical morphisms to metric trees. The tropical objects playing the role of algebraic curves and of the projective line are metric graphs and metric trees, respectively. The metric graph morphisms that we use have properties which make them discrete analogues of algebro-geometric morphisms; such \emph{tropical morphisms} have been studied in \cite{bn09, cap14, mik07, bbm11, cha13, abbr15, cmr16}, among others. In this setting, tropical morphisms to metric trees lead naturally to the concept of \emph{tree gonality} of a metric graph $\mG$, defined as the minimum degree of a tropical morphism from any tropical modification of $\mG$ to any metric tree. A tropical modification is an operation which retracts or attaches edges ending in a valency-1 vertex.
	 
While in Part~I~\cite{dv20} we addressed a question of Cools and Draisma~\cite{cd18} by constructing morphisms within top-dimensional trivalent cells of $\MTrop g$, our goal here is to construct the global parameter space $\TM d g$ across all of $\MTrop g$. As a set, $\TM d g$ consists of pairs of a genus-$g$ metric graph $\mH$ and a tropical morphism of degree~$d$ from a tropical modification of $\mH$ to a metric tree. Topologically, $\TM d g$ is constructed by gluing together finitely many cones of dimension $3g-3$ that are specified by combinatorial data, and identifying points that describe the same map. Informally, the morphisms parametrized by $\TM d g$ move in families of dimension $3g-3$, the dimension of the moduli space of metric graphs $\MTrop g$. We show that for even $g=2g'$, $\TM {g'+1} g$ surjects onto the moduli space of metric graphs $\MTrop g$. Our main result is:

\begin{thm} \label{thm}
	Let $g=2g'$ be even and $d = g' + 1$. The canonical projection
	\[
	\Pi \colon \TM d g \longrightarrow \MTrop g, \qquad (\mH, \tmor) \longmapsto \mH,
	\]
	is a surjective branched cover of generalized cone complexes of degree $\deg \Pi = C_{g'}$, the $g'$-th Catalan number. Moreover, $\TM d g$ is connected through codimension~1.
\end{thm}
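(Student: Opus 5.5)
The proof has four parts: set up the structure of $\Pi$, compute the degree on one generic cell, show the degree is the same everywhere, and deduce surjectivity and connectivity from that.

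\textbf{Structure of $\Pi$.} First I would establish that $\Pi$ is a morphism of generalized cone complexes. Each cone of $\TM d g$ is indexed by the combinatorial type of a tropical morphism, which is a finite object by Lemma~\ref{I-lemma-combinatorial-type-tmor}. On a given cone, the map to $\MTrop g$ sends the edge lengths of the target tree and the local degrees to the edge lengths of the source graph. This map is linear, say $\ell \mapsto A\ell$, with an integer matrix $A$. The multiplicity of a cone mapping onto a $(3g-3)$-dimensional cell is the determinantal multiplicity $|\det A|$. Here Proposition~\ref{I-prop-adm-matrix} and Lemma~\ref{I-lemma-limit-full-rank} should guarantee that $A$ is square and of full rank whenever the cone has dimension $3g-3$. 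With this set up, $\Pi$ is a branched cover exactly when the weighted count $\sum_{\text{cones over } \sigma} |\det A|$ is independent of the top-dimensional cell $\sigma$ of $\MTrop g$.

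\textbf{The count on caterpillars.} Next I would compute this number on one generic cell, the trivalent caterpillar of loops. Using the local analysis from Part~I (Proposition~\ref{I-prop-local}, Lemmas~\ref{I-lemma-pass-once} and~\ref{I-cor-no-return}), a degree-$(g'+1)$ morphism on a chain of loops is determined loop by loop. Reading the loops in order, each loop either raises or lowers a running "sheet count", and the constraint is that this count never goes negative. I would then show that every admissible morphism has $|\det A| = 1$. The valid up/down words are ballot sequences of length $2g'$, so the total is $C_{g'}$.

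\textbf{Invariance across walls.} The main step is showing the count does not change when crossing a codimension-$1$ wall $\tau$ of $\MTrop g$. Such a wall is where one edge contracts and a $4$-valent vertex appears; it is adjacent to three top cells, corresponding to the three resolutions of that vertex. I would prove a balancing condition for each codimension-$1$ cone $\kappa$ of $\TM d g$ lying over $\tau$:
\[
\sum_{\kappa \subset \delta} \bigl(|\det A_\delta|\ \text{counted on each side of } \tau\bigr)
\]
must agree between the two sides of every pair of adjacent top cells. The method is to classify the local pictures at $\kappa$: the $4$-valent vertex may map to a vertex of the tree, into the interior of an edge, onto contracted or dangling pieces, and so on. For each picture, I would list all one-parameter resolutions and compare determinants via the matrix-change lemma (Lemma~\ref{I-lemma-limit-matrix-change}) together with Riemann--Hurwitz (Lemma~\ref{I-lemma-riemann-hurwitz}). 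This case analysis is where I expect the real difficulty. Degenerate phenomena such as dangling trees and loops or bridges collapsing (Lemmas~\ref{I-lemma-limit-dangling-no-glue} and~\ref{I-lemma-loop-bridge}) can create cones with no partner on the other side, and these must be shown to contribute zero.

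\textbf{Conclusion.} Since the top cells of $\MTrop g$ are connected through codimension~$1$, the degree $C_{g'}$ computed on the caterpillar cell carries over to every cell. Lower-dimensional strata of $\MTrop g$ then follow by taking limits of morphisms, using properness of $\Pi$ on each cone. Surjectivity comes from this nonzero degree combined with path lifting: starting at a caterpillar, I would follow a generic path to any target graph, lift a morphism along it (which is possible because balancing prevents lifts from disappearing at walls), and take the limit at the endpoint. Finally, connectivity of $\TM d g$ through codimension~$1$ follows by path lifting as well. The lifts connect every top cone to one of the caterpillar cones. On the caterpillar, any two ballot sequences are related by elementary swaps, and each swap is realized by a wall-crossing in which the morphisms merge across a single codimension-$1$ cone.
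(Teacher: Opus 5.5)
Your outline matches the paper's (ballot-sequence count on caterpillars, invariance along generic paths, connectivity of $\MTrop g$ through codimension~1, path lifting, ballot swaps), but the weight you attach to cones is wrong, and the invariance argument fails with it. The edge-length matrix is not integral: its entries are $a_{ht}=\sum 1/m(e)$. Its raw determinant is not invariant either.
\begin{itemize}
\item In Example~\ref{ex-genus2}, every generic genus-$2$ graph has a single morphism over it. Yet $|\det A^{(1)}|=8$ on the theta cone and $|\det A^{(2)}|=2$ on the dumbbell cone. So your count jumps across exactly the kind of $4$-valent wall you plan to analyse. Lemma~\ref{lm:change-comb-type}(1) only gives $k_1^{(q)}\det A^{(q)}=k_1^{(q')}\det A^{(q')}$, here $\tfrac12\cdot 8=2\cdot 2$.
\item On the caterpillar, $A_\dtmor$ is diagonal with entries $2$ (loops), $\tfrac12$ (bridges) and $1/s_i$ (path edges). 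Hence $|\det A_\dtmor|=4\prod_i s_i^{-1}$, which depends on the ballot sequence: for $g=4$ it is $1$ and $\tfrac13$. It is not $1$, contrary to your claim.
\end{itemize}
The missing ingredient is the normalisation $\Mult\dtmor=\frac{D_\dtmor}{2^{l(T)}}\det A_\dtmor$, where $D_\dtmor$ is the product of the least common denominators of the rows. These denominators are computed in Lemma~\ref{lemma-edge-deno}, which rests on Proposition~\ref{proposition-at-most-two-weights}: along an edge of $H(\dtmor)$ the indices take at most two consecutive values, monotonically. This normalisation makes every caterpillar morphism have multiplicity $1$. It also absorbs the coefficients $k_1$, $k\pm1,\dots$ in the column relations, turning them into balancing identities.

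Second, you only treat walls of $\MTrop g$. A full-dimensional cone maps onto $A_\dtmor(\RRo^{E(T)})$, which in general is a proper subcone of the cell $C_{H(\dtmor)}$. So the fibre changes inside a top cell, and your reduction to ``the weighted count over cones over $\sigma$ is independent of $\sigma$'' is not the right criterion. Codimension-$1$ faces $C_{\dtmor_0}$ with $H(\dtmor_0)$ still trivalent map into the interior of cells: a target edge shrinks while the source type is unchanged. These are where $\Pi$ actually branches. In Example~\ref{ex-multiset-star}, the wall $y_3=y_1+y_2$ lies inside one cell, with one realization on one side and two realizations of the same combinatorial type on the other. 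Invariance across such walls needs the signed balancing $\sum_{\dtmor\in\cstar{\dtmor_0}}\Mult\dtmor=0$ over a multiset. This is Proposition~\ref{prop-signed-mult}(1), proved from the trivalent cases in Appendix~\ref{appendix-trivalent-deformation}. Your path-lifting arguments for surjectivity and connectivity (``lifts cannot disappear at walls'') depend on it as well. At the $4$-valent walls you do treat, the paper needs no two-sided cancellation. There, all morphisms specializing to $\dtmor_0$ have equal signed multiplicity and are equidistributed among the three resolutions, Types I--III.
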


In particular, for every $\mH \in \MTrop g$, the fiber over $\mH$ yields Catalan-many tropical morphisms of degree $g'+1$ to metric trees when counted with natural multiplicity:
\begin{equation} \label{eq:fiber-count}
\sum_{\tmor \in \inv \Pi(\mH)} \absMult \tmor = C_{g'} = \dfrac{1}{g'+1}\binom{2g'}{g'}.
\end{equation}
Furthermore, on a dense open locus of $\MTrop g$, these fibers capture all degree-$d$ tropical morphisms to metric trees. 

Because $\Pi$ is surjective, every genus-$g$ metric graph admits a tropical morphism to a metric tree of degree at most $\lceil g/2 \rceil + 1$, yielding an effective proof of the tree gonality bound:
\begin{cor} \label{theorem-gonality}
	The tree gonality of any genus-$g$ metric graph $\mG$ is at most $\lceil g/2 \rceil + 1$.
\end{cor}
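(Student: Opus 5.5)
The corollary should follow from the surjectivity of $\Pi$ in Theorem~\ref{thm}, plus a reduction from arbitrary genus to even genus. We split by the parity of $g$.

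\emph{Even genus.} If $g=2g'$, then $\lceil g/2\rceil+1=g'+1=d$. Let $\mG$ be a genus-$g$ metric graph and let $\mH\in\MTrop g$ be its stable model, obtained by retracting trees and smoothing bivalent vertices. These operations are tropical modifications, or at worst reparametrizations that do not change the metric graph up to modification. By Theorem~\ref{thm}, $\Pi$ is surjective, so there is some $(\mH,\tmor)\in\inv\Pi(\mH)$. By definition of $\TM d g$, this $\tmor$ is a degree-$d$ tropical morphism from a tropical modification of $\mH$ to a metric tree. A tropical modification of $\mH$ is also a tropical modification of $\mG$, because both are obtained from the common core by attaching and retracting trees. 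So the tree gonality of $\mG$ is at most $d$.

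Small genera need separate handling, since $\MTrop g$ is only defined for $g\ge 2$. For $g=0$, $\mG$ is a tree and the identity map has degree $1$. For $g=1$, a cycle maps with degree $2$ to an interval by folding it.

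\emph{Odd genus.} If $g=2g'-1$, then $\lceil g/2\rceil+1=g'+1$. The plan is to raise the genus by one without changing the metric of $\mG$. Choose a point $p$ of $\mG$ and attach a small loop there (a cycle of length $\varepsilon$ joined by a short bridge). This gives a graph $\mG'$ of genus $2g'$. Apply the even case to get a degree-$(g'+1)$ tropical morphism $\tmor'$ from a modification of $\mG'$ to a tree $T$. Then restrict to $\mG$: delete the open loop and the open bridge, and re-attach trees so the map stays harmonic of the same degree.

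This restriction is the main obstacle. Removing the loop lowers the local degree at the attachment point, so harmonicity must be repaired there. The approach I would try first:
\begin{itemize}
\item Replace the deleted part of the source by a copy of the tree $\tmor'(\text{loop}\cup\text{bridge})\subset T$, attached at $p$.
\item Map this copy isometrically onto its image, with dilation factors matching those of $\tmor'$ on the deleted edges.
\item Because the image is a tree, local degrees at each point of $T$ are preserved, so the modified map is harmonic of the same degree.
\end{itemize}
In effect this unfolds the loop's contribution into a tree hanging off $p$, which is a tropical modification of $\mG$.

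A cleaner alternative avoids surgery altogether. Take a degree-$g'$ map for a degree-$g'$ problem in genus $2g'-2$, then add the extra cycle with one extra sheet.

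Either way, the degree obtained is at most $\lceil g/2\rceil+1$.
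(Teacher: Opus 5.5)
Your even case, the remarks on $g\le 1$, and the reduction for odd $g$ (attach a loop through a bridge at $p$, then apply the even case to the genus-$(g+1)$ graph) agree with the paper. The gap is in the restriction step, and the surgery you propose does not work as stated.

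The loop maps into a tree, so every point of its image is covered with total multiplicity at least $2$, because both arcs lie over the same target edges. A single slope-one copy of the image therefore lowers the local degree. The phrase ``dilation factors matching those of $\tmor'$'' is not well defined, since one copy edge must replace several deleted edges lying over the same target edge. Suppose instead you give each copy edge the total multiplicity $n\ge 2$ as its index. Then balancing holds, but the Riemann--Hurwitz condition fails at every copy vertex $x$ over a vertex $w$ with $\vale w\ge 3$, since $r(x)=(\vale w-2)(1-n)<0$. So ``because the image is a tree'' does not justify anything, and you never check the condition at $p$. The ``cleaner alternative'' is too vague to fill the gap: how the extra sheet is glued, and why lengths are compatible, is exactly the difficulty.

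What is missing is control over how the lollipop maps. The paper gets it from Lemma~\ref{lm:bridge-and-loop}. Points of $\TM d g$ come from change-minimal full-rank morphisms, so the lollipop maps onto a length-$2$ path ending at a leaf of $\mT_b$. The bridge has index $2$, the middle image vertex is divalent, and the bridge and loop are the only non-dangling elements over these two edges. One then deletes these two edges of $\mT_b$ together with their fibres, which are the lollipop plus dangling trees. Balancing and degree are unaffected. At each preimage $x$ of the attachment vertex, $r$ increases by $m(x)$ minus the number of deleted edges at $x$, which is non-negative. Given this structure, your single copy also works: an index-$2$ path over a path whose interior vertex is divalent and which ends at a leaf.

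If you prefer to avoid the lemma, there is a general repair. Let $L$ be the deleted part, excluding $p$. Harmonicity at all points of $L$ shows that its fibre count $n$ is constant on each branch $K$ of the tree at $\tmor'(p)$. Attach $n_K$ slope-one copies of each branch $K$ at $p$, rather than one copy of the image. New vertices then have $r=0$ and all local degrees are preserved. At $p$, $r$ increases by $2N'+m(e_b)-1\ge 0$, where $N'$ is the total multiplicity of $L$ over $\tmor'(p)$ and $e_b$ is the bridge edge at $p$.
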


While Theorem~\ref{thm} is formulated for even genus, Corollary~\ref{theorem-gonality} holds for arbitrary $g$: as in Part~I~\cite{dv20}, attaching a bridge and a self-loop reduces the odd case $g$ to $g+1$. Furthermore, because the proof of Theorem~\ref{thm} does not depend on algebro-geometric machinery and is purely effective, it yields a concrete combinatorial algorithm to construct degree $\lceil g/2 \rceil + 1$ rank-1 divisors on any metric graph by walking along deformation paths in $\TM d g$ from the caterpillar of loops (Proposition~\ref{prop-divisors-on-chain}).
		
It is an open question, for $d \le \lceil g /2 \rceil$, whether $\TM {d} g$ is the tropicalization of the Hurwitz space of degree-$d$ covers of $\PP^1$ (whose image in $\MTrop g$ would tropicalize the $d$-gonal locus of $\calM g$). For even genus $g=2g'$ and $d = g'+1$, the question asks whether $\TM d g$ tropicalizes the universal parameter space of linear series studied by Eisenbud and Harris~\cite{eh87}, and whether the algebraic cycle multiplicities coincide with our determinantal multiplicities $\absMult \dtmor$. That $\Pi \colon \TM d g \to \MTrop g$ is a branched cover of generalized cone complexes provides compelling evidence for such an underlying non-Archimedean tropicalization.

\subsection*{Proof idea} 
The proof of Theorem~\ref{thm} proceeds via a continuous deformation argument across the moduli space:
\begin{enumerate}
	\item \textbf{Base count on caterpillars of loops:} We introduce a distinguished class of metric graphs $\HCL g$, termed \emph{caterpillars of loops}, consisting of a caterpillar graph that at the leaves has loops. For a generic caterpillar of loops $\mH_L$, the edge-length matrix $A_\dtmor$ (which expresses source edge lengths in terms of target edge lengths) is a diagonal matrix, each multiplicity $\absMult \dtmor$ equals~1, and the tropical morphisms in $\TM d g$ over $\mH_L$ are in bijection with length-$g$ Dyck paths (or ballot sequences). This establishes the base count of $C_{g'}$ tropical morphisms on the caterpillar locus.
	\item \textbf{Invariance within trivalent cells (Part~I):} Within a top-dimensional cell of $\MTrop g$, deformations vary target edge lengths while the combinatorial type of the source remains fixed. Across codimension-1 trivalent limits $\dtmor_0$, the signed multiplicities satisfy the local balancing relation
	\[ \sum_{\dtmor \in \cstar{\dtmor_0}} \Mult \dtmor = 0, \]
	which ensures that the total count is constant within each cell.
	\item \textbf{Chamber transitions across non-trivalent walls (\emph{Walking Through II}):} To deform between different trivalent cells of $\MTrop g$, edge lengths shrink to zero, producing non-trivalent limits with vertices of valency $\ge 4$ or contracted loops. Resolving the 8 boundary casework scenarios shows that every codimension-1 limit extends into adjacent full-dimensional chambers with equal multiplicity ($\Mult \dtmor = \Mult{\dtmor'}$), preserving the count across cell boundaries.
	\item \textbf{Global codimension-1 connectivity:} Because the moduli space $\MTrop g$ is connected through codimension~1, any generic metric graph $\mH$ can be joined to a generic caterpillar of loops $\mH_L$ by a continuous piecewise-linear path that avoids codimension-2 strata. The invariance of the count along this path implies that $\sum \absMult \tmor = C_{g'}$ holds for all generic metric graphs.
\end{enumerate}

\subsection*{Organization of the paper} 
In~Section~\ref{section-background} we review metric graphs, tropical modifications, and the moduli space~$\MTrop g$. 
In~Section~\ref{section-properties-of-tropical-morphisms} we summarize the properties of discrete tropical morphisms, edge-length matrices, and full-dimensional cones from~\cite{dv20}. 
In~Section~\ref{section-a-space-of-tropical-morphisms} we construct the moduli space~$\TM d g$, analyze its symmetries, define the projection~$\Pi$, and compute the Catalan base count on caterpillars of loops. 
In~Section~\ref{sec-deformation-invariance} we establish the invariance of the count via continuous deformation, prove the tropical balancing condition, and complete the proof of Theorem~\ref{thm}. 
Section~\ref{sec-constructions} contains the exhaustive casework resolving the boundary limits. 
Finally, Appendix~\ref{appendix-trivalent-deformation} summarizes the trivalent deformation~cases.

\subsection*{Related work}
In a recent pair of articles, Robayo~\cite{rob24, rob25} develops a sophisticated tropical intersection theory tailored to tropical moduli spaces. As an application, he proves that loci of curves of fixed gonality form a tropical cycle. However, establishing the balancing condition in that framework relies on input from algebraic geometry (via Jun Li's degeneration formula), and is therefore neither purely combinatorial nor constructively effective. In degree $d = g' + 1$, he extends our caterpillar of loops construction to accommodate marked legs, thereby recovering and extending our Catalan count. In contrast, our approach provides a self-contained, constructive deformation theory directly on the parameter space $\TM d g$, establishing the count and the tree gonality bound through purely tropical and combinatorial methods.

\section*{Acknowledgements} 
I express my deep admiration and gratitude to Jan Draisma for years of mentorship and mathematics. I also warmly thank Erwan Brugall\'e, Melody Chan, Karl Christ, Livio Ferretti, Hannah Markwig, Dhruv Ranganathan, Diego Robayo, and Sam Payne for enlightening conversations and working out examples together. A portion of this research was conducted during the semester ``Tropical Geometry, Amoebas and Polytopes'' at the Institut Mittag-Leffler (Djursholm, Sweden), whose hospitality and stimulating research environment are gratefully acknowledged. This research was supported by the Swiss National Science Foundation, grant number 200142, and by the UK Engineering and Physical Sciences Research Council (EPSRC) grant number EP/X02752X/1.

\renewcommand{\thesubsection}{\thesection.\arabic{subsection}}
\numberwithin{thm}{section}
\setcounter{thm}{0}

\section{Metric graphs and moduli} \label{section-background}

Definitions and conventions follow \cite{dv20}. We briefly fix notation for metric graphs, tropical modifications, and the moduli space $\MTrop g$.

\subsection{Graphs} \label{subsection-graphs}
All graphs $G = (V(G), E(G))$ are finite and connected, with loops and multiple edges permitted. The \emph{genus} of $G$ is $g(G) = |E(G)| - |V(G)| + 1$; a \emph{tree} is a graph of genus 0, and its monovalent vertices are \emph{leaves}. For a vertex $A \in V(G)$, we write $\vale A$ for its valency (loops counting twice), and $\Neigh A \subset E(G)$ for the edges incident to $A$. A graph with minimum valency at least 3 is called a \emph{combinatorial type}; if every vertex has valency 3, it is \emph{trivalent}. Edge deletions, contractions, and subdivisions are defined in the standard way. A \emph{graph morphism} $\gamma: G \to G'$ preserves incidences and may contract edges: $\gamma(V(G)) \subseteq V(G')$, and for each edge $e$ with endpoints $A, B$, either $\gamma(e) \in E(G')$ with endpoints $\gamma(A), \gamma(B)$, or $\gamma(e) = \gamma(A) = \gamma(B) \in V(G')$. Fixing an edge ordering identifies the real vector space $\RR^{E(G)}$ of edge functions with $\RR^{|E(G)|}$.

\subsection{Metric graphs} \label{section-metric-graphs}
A \emph{metric graph} $\mG$ is a compact, connected metric space obtained from a graph $G$ by assigning positive lengths $\ell: E(G) \to \RRo$ to its edges and equipping the resulting one-dimensional CW-complex with the shortest-path metric. We call $(G, \ell)$ a \emph{realization} and $G$ a \emph{model} of $\mG$. The genus is $g(\mG) = b_1(\mG) = g(G)$.

A metric graph possesses infinitely many models via edge subdivisions. A finite subset $S \subset \mG$ is called a \emph{vertex set} if $\mG \setminus S$ is a disjoint union of open intervals; each $S$ induces a realization $(G_S, \ell_S)$ whose vertices are $S$ and whose edges are the connected components of $\mG \setminus S$. A point $x \in \mG$ is an \emph{essential vertex} if no open neighborhood of $x$ is isometric to an open interval $(-\varepsilon, \varepsilon)$, i.e., $x$ has valency $\vale x \ne 2$.

\begin{lm} \label{lemma-model-metric-graph}
	Let $\mG$ be a metric graph of genus $g \ge 2$, and let $\calE_\mG \subset \mG$ be its set of essential vertices. Then $\calE_\mG$ is non-empty and finite, and a finite subset $S \subset \mG$ is a vertex set if and only if $\calE_\mG \subseteq S$.\qed
\end{lm}

The realization $(\calE_\mG, \ell_{\calE_\mG})$ induced by $\calE_\mG$ is the \emph{essential realization}, and the graph $\calE_\mG$ is the \emph{essential model}. The only metric graphs without essential vertices are metric loops ($g = 1$, i.e., cycles), which we exclude. Identifying vertices of $\calE_\mG$ with their underlying points in $\mG$, we also write $\calE_\mG \subset \mG$ for the set of essential vertices when no ambiguity arises. The essential model is minimal: every model of $\mG$ is obtained from $\calE_\mG$ by edge subdivisions.

\subsection{Tropical modification} \label{subsection-tropical-modification}
\emph{Tropical modification} is the equivalence relation on metric graphs generated by attaching or removing trees. To treat this operation combinatorially, we use the notion of \emph{dangling elements} introduced in \cite{dv20}:

\begin{de} \label{de-dangling-char}
	Let $G$ be a graph, and $\mG = (G, \ell)$ a metric graph.
	An edge $e \in E(G)$ is \emph{dangling} if $G \setminus \{e\}$ has a connected component that is a tree; a vertex $A \in V(G)$ is \emph{dangling} if every edge incident to $A$ is dangling. A point $x \in \mG$ is \emph{dangling} if it corresponds to a dangling vertex or lies in the interior of a dangling edge. We denote by $\Neighnd A$ the \emph{non-dangling edge neighbourhood} of $A$ (the set of edges incident to $A$ that are not dangling), and by $\ndval A = |\Neighnd A|$ its \emph{non-dangling valency}.
\end{de}

Deleting all dangling points of $\mG$ produces a canonical metric graph $\mH$ with $g(\mH) = g(\mG)$. Its essential model $H$ has minimum valency at least 3, and is called the \emph{combinatorial type} of $\mG$, while $\mH$ is the \emph{deletion of dangling trees} of $\mG$.

\begin{lm}[\cite{dv20}] \label{lemma-combinatorial-type}
	Let $\mG$ be a metric graph of genus $g \ge 2$. If $\mG'$ is equivalent to $\mG$ under tropical modification and has a model $G'$ with minimum valency at least 3, then $\mG'$ is isometric to $\mH$ and $G'$ is isomorphic to $H$.\qed
\end{lm}

\subsection{Moduli space of metric graphs} \label{section-moduli-space-of-metric-graphs}
We denote by $\mathbb G_g$ the finite set of genus-$g$ trivalent combinatorial types. 

\begin{lm} \label{lemma-trivalent}
	Let $g \ge 2$. For every genus-$g$ combinatorial type $H_0$, there exists $H \in \mathbb G_g$ and a sequence of edge contractions of $H$ yielding a graph isomorphic to $H_0$.\qed
\end{lm}

For a graph $G$, the cone of length functions is the positive orthant $C_G = \RRo^{E(G)}$. We partially compactify $C_G$ to $\overline{C}_G \subset \RRgo^{E(G)}$ by allowing edge lengths to be zero: given $\ell \in \overline{C}_G$, we contract all edges with $\ell(e) = 0$, requiring that all cycles of $G$ retain strictly positive length so that the genus remains $g$.

\begin{de}
	The moduli space of genus-$g$ metric graphs is the topological space
	\[\MTrop g = \left(\bigsqcup_{H \in \mathbb G_g} \overline{C}_H \right) \Big / \cong, \]
	where $\cong$ identifies points that define isometric metric graphs, equipped with the quotient topology.
\end{de}

By Lemmas~\ref{lemma-combinatorial-type} and \ref{lemma-trivalent}, every genus-$g$ metric graph has a unique representative in $\MTrop g$ up to tropical modification and isometry. Since trivalent graphs have $|E(H)| = 3g - 3$, $\MTrop g$ has pure dimension $3g - 3$. Unlike the extended moduli spaces of \cite{bmv11, cap12, acp15}, $\MTrop g$ here carries no marked points, infinite legs, or vertex weights. While one could consider vertex weights to fully compactify the space by contracting cycles, this would introduce unnecessary technical overhead: our continuous deformation argument operates entirely within the locus where all $g$ cycles retain positive length.

\subsection{Symmetry in \texorpdfstring{$\MTrop g$}{MTrop g}} \label{subsection-symmetry-in-mtropg}
For a combinatorial type $H$, the automorphism group $\Aut H$ acts on the open cone $C_H$ by length pullback: $\gamma \cdot \ell = \ell \circ \inv \gamma$ for $\gamma \in \Aut H$.

\begin{lm} \label{lemma-isometry-to-isomorphism}
	Let $\Aq \mG 1, \Aq \mG 2$ be metric graphs, $S$ a vertex set of $\Aq \mG 1$, and $\Psi : \Aq \mG 1 \to \Aq \mG 2$ a continuous map. Then $\Psi$ is an isometry if and only if $\Psi(S)$ is a vertex set of $\Aq \mG 2$, the induced map $\gamma_\Psi$ is a graph isomorphism, and $\Aq \ell 2 = \Aq \ell 1 \circ \inv \gamma_\Psi$.\qed
\end{lm}

Applying Lemma~\ref{lemma-isometry-to-isomorphism} to essential vertex sets $\calE_{\Aq \mG 1}, \calE_{\Aq \mG 2}$ shows that two points in $C_H$ define isometric metric graphs if and only if they lie in the same $\Aut H$-orbit. Consequently, a metric graph $\mG$ with combinatorial type $H$ and trivial isometry group is represented exactly $\#\Aut H$ times in $C_H$.

\subsection{Notation} \label{sec-notation}
Throughout, $\mG$ denotes a metric graph of genus $g \ge 2$, $\mT$ a metric tree of genus $0$, and $\mH$ the deletion of dangling elements of $\mG$. For combinatorial models, $G$ denotes a graph of genus $g \ge 2$, $T$ a tree, and $H$ a combinatorial type (minimum valency $\ge 3$). The essential model of $\mG$ (and its set of essential vertices) is denoted by $\calE_\mG$, $\Neighnd A$ denotes the non-dangling edge neighbourhood of $A$, and $\ndval A = |\Neighnd A|$ its non-dangling valency. Discrete graph morphisms are denoted by~$\dtmor$, and tropical morphisms of metric graphs by~$\tmor$. We write $[d] = \{1, 2, \dots, d\}$, and $x \in G$ means $x \in V(G) \cup E(G)$.


\section{Properties of tropical morphisms} \label{section-properties-of-tropical-morphisms}

In this section we review the properties of discrete tropical morphisms and tropical morphisms to trees from \cite{dv20}. We introduce the cone of sources and its edge-length matrix, characterize full-rank morphisms, and identify the local combinatorial structures that forbid a morphism from being full-rank.

\subsection{Tropical morphisms} \label{subsection-tropical-morphisms}
A tropical morphism is a continuous piecewise linear map between metric graphs with integer slopes that satisfies the harmonic balancing condition at every vertex, tropicalizing ramified covering maps of algebraic curves \cite{cap14}. Its underlying combinatorial structure is encoded by a discrete tropical morphism. Following \cite{dv20}, all graphs considered here are unweighted.

\begin{de}[discrete tropical morphism] \label{def-discrete-tropical-morphism}
	Let $\dtmor \colon G \to G'$ be a morphism of finite connected graphs, and let $m_\dtmor \colon G \to \ZZgo$ be a map.
	\begin{enumerate}[(a)] 
		\item $m_\dtmor$ is an \emph{index map} for $\dtmor$ if for every $e \in E(G)$, $\dtmor(e) \in V(G')$ if and only if $m_\dtmor(e) = 0$.
		\item $\dtmor$ is \emph{harmonic} with index map $m_\dtmor$ if for every $A \in V(G)$ and every edge $e' \in E(G')$ incident to $\dtmor(A)$, the \emph{balancing condition} holds:
		\[ m_\dtmor(A) = \sum_{\substack{e \in \Neigh A \\ \dtmor(e) = e'}} m_\dtmor(e).\]
		In particular, this sum is independent of the choice of $e'$.
		\item $\dtmor$ is \emph{non-degenerate} if $m_\dtmor(x) \ge 1$ for all $x \in G$ (so no edges are contracted).
		\item $\dtmor$ satisfies the \emph{Riemann--Hurwitz condition} (RH-condition) if for every $A \in V(G)$:
		\[ r_\dtmor(A) = (\vale A - 2) - m_\dtmor(A) (\vale \dtmor(A) - 2) \ge 0. \]
		\item A \emph{discrete tropical morphism} (\DTmorp) is a pair $(\dtmor, m_\dtmor)$ consisting of a non-degenerate harmonic morphism $\dtmor$ with index map $m_\dtmor$ satisfying the RH-condition. We abbreviate $m_\dtmor$ to $m$ when no confusion arises.
	\end{enumerate}
\end{de}

\begin{lm}[{\cite[Lemma~2.4]{cap14}}, degree of $\dtmor$] \label{lm:degree-of-a-dtmor}
	Let $\dtmor \colon G \to G'$ be a \DTmorp. The fiber count with multiplicities,
	\[\deg \dtmor = \sum_{x \in \inv \dtmor(x')} m(x),\]
	is independent of the choice of $x' \in G'$ and is called the \emph{degree} of $\dtmor$. \qed
\end{lm}

\begin{lm}[{\cite[Lemma~\ref{I-lemma-formula-rphi}]{dv20}}, formula for $r_\varphi$] \label{lm:formula-rphi} 
	Let $\dtmor \colon G \to G'$ be a \DTmorp. Then for every $A \in V(G)$,
	\[\pushQED{\qed}
	r_\dtmor(A) = 2(m(A) - 1) - \sum_{e \in \Neigh A} (m(e) - 1).\qedhere\popQED
	\]
\end{lm}

\begin{lm}[{\cite[Lemma~\ref{I-lemma-riemann-hurwitz}]{dv20}}, Riemann--Hurwitz formula] \label{lm:riemann-hurwitz} 
	Let $\dtmor \colon G \to G'$ be a \DTmorp. Then
	\[\pushQED{\qed}
	2g(G) - 2 = \deg \dtmor \cdot (2g(G')-2) + \sum_{A \in V(G)} r_\dtmor(A).\qedhere\popQED
	\] 
\end{lm}

Since $r_\dtmor(A) \ge 0$ for all $A$, Lemma~\ref{lm:riemann-hurwitz} implies $g(G) \ge g(G')$. We call $G$ the \emph{source} and $G'$ the \emph{target} of $\dtmor$. Choosing metric data on the target yields a tropical morphism:

\begin{de}[tropical morphism] \label{definition-tropical-morphism}
	Let $\dtmor \colon G \to G'$ be a \DTmor and $\ell_{G'} \in C_{G'}$. The corresponding \emph{tropical morphism} is the unique continuous map $\tmor \colon \mG \to \mG'$ from the metric graph $\mG = (G, e \mapsto \ell_{G'}(\varphi(e))/m_\dtmor(e))$ to $\mG' = (G', \ell_{G'})$ whose restriction to each edge segment $\mG_e$ is an affine bijection onto $\mG'_{\dtmor(e)}$ of slope $m_\dtmor(e)$, and that agrees with $\dtmor$ on vertices.
\end{de}

We call $\dtmor$ a \emph{model} and $(\dtmor, \ell_{G'})$ a \emph{realization} of $\tmor$. The fiber count with slopes and vertex multiplicities equals $\deg \dtmor$ everywhere by Lemma~\ref{lm:degree-of-a-dtmor}, so $\tmor$ is a tropical morphism in the sense of \cite{cd18}. Recalling that $\calE_\mG$ and $\calE_{\mG'}$ denote the essential models of $\mG$ and $\mG'$ (and their sets of essential vertices), every model of a tropical morphism $\tmor \colon \mG \to \mG'$ is determined by choosing a vertex set $\mathcal{S}' \subset \mG'$ containing $\tmor(\mathcal{E}_\mG) \cup \mathcal{E}_{\mG'}$ and setting $\mathcal{S} = \inv\tmor(\mathcal{S}')$.

\begin{cons} \label{construction-canonical-model}
	Let $\tmor \colon \mG \to \mG'$ be a tropical morphism, and let $\mathcal{S}' \subset \mG'$ be a finite subset such that $\tmor(\mathcal{E}_{\mG}) \cup \mathcal{E}_{\mG'} \subseteq \mathcal{S}'$. By Lemma~\ref{lemma-model-metric-graph}, the sets $\mathcal{S} = \inv \tmor(\mathcal{S}')$ and $\mathcal{S}'$ are vertex sets inducing realizations $\mG = (G_\mathcal{S}, \ell_\mathcal{S})$ and $\mG' = (G_{\mathcal{S}'}, \ell_{\mathcal{S}'})$.
	Define $\dtmor_{\mathcal{S}'} \colon G_\mathcal{S} \to G_{\mathcal{S}'}$ by sending each vertex $x \in \mathcal{S}$ to $\tmor(x)$, and each edge $e \in E(G_\mathcal{S})$ to the edge $e' \in E(G_{\mathcal{S}'})$ containing $\tmor(e \setminus \mathcal{S})$, with index map $m_{\dtmor_{\mathcal{S}'}}(e) = \ell_{\mathcal{S}'}(\dtmor_{\mathcal{S}'}(e)) / \ell_\mathcal{S}(e)$. 
	By construction, $\dtmor_{\mathcal{S}'}$ is a \DTmor and $(\dtmor_{\mathcal{S}'}, \ell_{\mathcal{S}'})$ realizes $\tmor$. When $\mathcal{S}' = \tmor(\mathcal{E}_{\mG}) \cup \mathcal{E}_{\mG'}$, the induced map is denoted $\dtmor_\ess$ and called the \emph{essential model}; it is minimal in the sense that every model $\dtmor_{\mathcal{S}'}$ arises as an edge subdivision of $\dtmor_\ess$.
\end{cons}

\begin{de}[isomorphism of morphisms] \label{definition-tropical-iso} 
	Let $\dtmor_1 \colon G_1 \to G'_1$ and $\dtmor_2 \colon G_2 \to G'_2$ be graph morphisms. An isomorphism from $\dtmor_1$ to $\dtmor_2$ is a pair of graph isomorphisms $(\gamma, \gamma')$ making Diagram~\ref{diagram-iso}(a) commute. If $\dtmor_1, \dtmor_2$ are \DTmorsp, $(\gamma,\gamma')$ is an isomorphism of \DTmors if the index map $m_{\dtmor_2}$ pulls back to $m_{\dtmor_1}$ via $\gamma$ (Diagram~\ref{diagram-iso}(b)). An isomorphism between tropical morphisms $\tmor_1, \tmor_2$ is a pair of isometries $(\Psi, \Psi')$ making Diagram~\ref{diagram-iso}(c) commute.
	
	\begin{minipage}[b]{0.95\textwidth}
		\centering
		\begin{minipage}{0.3\textwidth}
			\centering
			\[
			\begin{tikzcd}
			G_1 \arrow[r,"\dtmor_1"] \arrow[d,swap,"\gamma"] &
			G'_1 \arrow[d,"\gamma'"] \\
			G_2 \arrow[r,"\dtmor_2"] & G_2'
			\end{tikzcd}	
			\]
			(a)
		\end{minipage}
		\begin{minipage}{0.3\textwidth}
			\centering
			\[ 
			\begin{tikzcd}
			G_1 \arrow[d,swap,"\gamma"] \arrow{dr}{m_{\dtmor_1}} \\
			G_2 \arrow[r,swap,"m_{\dtmor_2}"] & \mathbb Z_{\ge 0}
			\end{tikzcd} 
			\]
			(b)
		\end{minipage}
		\begin{minipage}{0.3\textwidth}
			\centering
			\[ 
			\begin{tikzcd}
			\mG_1 \arrow[r,"\tmor_1"] \arrow[d,swap,"\Psi"] &
			\mG'_1 \arrow[d,"\Psi'"] \\
			\mG_2 \arrow[r,"\tmor_2"] & \mG_2'
			\end{tikzcd} 
			\]
			(c)
		\end{minipage}
		\newcounter{diagram}
		\refstepcounter{thm}
		\label{diagram-iso}
		Diagram \ref{diagram-iso}
	\end{minipage}
\end{de}

For $g(G) \ge 2$, a fibre $\inv \dtmor(x')$ is a \emph{dangling fibre} if all its elements are dangling in $G$. Let $\hat E' \subset E(G')$ be the set of edges whose fibre is not dangling, $\hat G' \subseteq G'$ the subgraph induced by $\hat E'$, and $\hat G$ the unique connected component of $\inv \dtmor(\hat G')$ of positive genus. The \emph{deletion of dangling fibres} of $\dtmor$ is the restriction $\hat \dtmor = \dtmor|_{\hat G} \colon \hat G \to \hat G'$. Under tropical modification, we identify $\dtmor$ with the essential model $\hat \dtmor_\ess$, and $\tmor = (\dtmor, \ell)$ with $\hat \tmor = (\hat \dtmor, \ell|_{\hat G'})$.

\begin{de} \label{definition-combinatorial-type-tmor}
	Let $\tmor$ be a tropical morphism and $\hat \tmor$ its deletion of dangling fibres. The \emph{combinatorial type} of $\tmor$ is the essential model $\hat \dtmor_\ess$ of $\hat \tmor$.
\end{de}

\begin{de} \label{definition-combinatorial-type}
	A \emph{combinatorial type of \DTmorsp} is a \DTmor $\dtmor \colon G \to G'$ without dangling fibres such that $\sum_{A \in \inv \dtmor(v)} r_\dtmor(A) \ge 1$ for every divalent vertex $v \in V(G')$. By \cite[Lemma~\ref{I-lemma-combinatorial-type-tmor}]{dv20}, if a tropical modification $\bar \tmor \colon \bar \mG \to \bar \mG'$ of $\tmor$ has a model $\bar \dtmor$ that is a combinatorial type, then $\bar \tmor \cong \hat \tmor$ and $\bar \dtmor \cong \hat \dtmor_\ess$, so the combinatorial type is invariant under tropical modifications.
\end{de}

Given a non-loop edge $e' \in E(G')$, contracting $e'$ in $G'$ and the edges $\inv \dtmor(e')$ in $G$ yields graphs $G'_0$ and $G_0$, and an induced morphism $(\dtmor_0, m_{\dtmor_0}) \colon G_0 \to G'_0$, called the \emph{limit at} $e'$. It satisfies conditions (a)--(c) of Definition~\ref{def-discrete-tropical-morphism}.

\begin{prop}[$r_\varphi$ under contraction] \label{prop-rphi-under-contraction}
	Let $e' \in E(G')$ be a non-loop edge, let $\dtmor_0 \colon G_0 \to G'_0$ be the limit at $e'$, and let $w_0 \in V(G'_0)$ be the vertex to which $e'$ contracts. For each $A_0 \in \inv \dtmor_0(w_0)$, let $G_{A_0} \subseteq G$ be the subgraph contracting to $A_0$. Then:
	\[\pushQED{\qed}
	2g(G_{A_0}) + r_{\dtmor_0}(A_0) = \sum_{A \in V(G_{A_0})} r_\dtmor(A). \qedhere\popQED
	\]
\end{prop}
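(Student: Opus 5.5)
The identity is purely local, so I will compute both sides with the formula of Lemma~\ref{lm:formula-rphi} and compare. Write $G_{A_0}$ for the connected subgraph of $G$ that contracts to $A_0$. Its edges are exactly the edges of $\inv\dtmor(e')$ lying in the component that collapses to $A_0$, and its vertices are the vertices of $G$ mapping to that component. Every vertex $A\in V(G_{A_0})$ maps to an endpoint of $e'$, so $A$ is incident to at least one edge over $e'$ by harmonicity and non-degeneracy; hence $G_{A_0}$ is connected and contains all of these vertices. For each such $A$,
\[ r_\dtmor(A) = 2(m(A)-1) - \sum_{e\in\Neigh A}(m(e)-1). \]

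\textbf{Step 1: sum over $V(G_{A_0})$.} Split the edge sum into edges internal to $G_{A_0}$ and edges leaving it. An internal edge is counted twice, or once as a loop contributing to valency twice; with the convention that $\Neigh A$ counts loops twice, each internal edge contributes $2(m(e)-1)$. Every edge leaving $G_{A_0}$ is not contracted, so it survives in $G_0$ as an edge of $\Neigh{A_0}$ with the same index. This gives
\[ \sum_A r_\dtmor(A) = 2\sum_A (m(A)-1) - 2\sum_{e\in E(G_{A_0})}(m(e)-1) - \sum_{e\in\Neigh{A_0}}(m(e)-1). \]

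\textbf{Step 2: the index $m_{\dtmor_0}(A_0)$.} Harmonicity at the contracted edge $e'$ gives $m(A)=\sum_{e\in\Neigh A,\ \dtmor(e)=e'} m(e)$. Summing over the vertices of $G_{A_0}$ lying over one endpoint $w$ of $e'$ yields $\sum_{e\in E(G_{A_0})} m(e)$, since each edge over $e'$ has exactly one end over $w$. Call this common value $M$; it is the same for either endpoint. The limit morphism has $m_{\dtmor_0}(A_0)=M$, because an edge $f'\ne e'$ at $w_0$ is incident to exactly one endpoint $w$ of $e'$. Its preimages at $A_0$ are the edges over $f'$ at vertices over $w$, and by harmonicity these sum to $\sum_{A\mapsto w} m(A) = M$.

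\textbf{Step 3: assemble.} Write $V_w$ and $V_{w'}$ for the vertices of $G_{A_0}$ lying over the two endpoints, so $|V(G_{A_0})|=|V_w|+|V_{w'}|$. Then
\[ \sum_A (m(A)-1) = 2M - |V(G_{A_0})|, \qquad \sum_{e\in E(G_{A_0})}(m(e)-1) = M - |E(G_{A_0})|. \]
Substituting into Step 1,
\[ \sum_A r_\dtmor(A) = 2M - 2|V| + 2|E| - \sum_{e\in\Neigh{A_0}}(m(e)-1) = 2(M-1) - \sum_{e\in\Neigh{A_0}}(m(e)-1) + 2\bigl(|E|-|V|+1\bigr). \]
The first two terms are $r_{\dtmor_0}(A_0)$ by Lemma~\ref{lm:formula-rphi} applied to $\dtmor_0$. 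That lemma is derived only from harmonicity and the definition of $r$, and $\dtmor_0$ satisfies (a)--(c) of Definition~\ref{def-discrete-tropical-morphism}. The last term is $2g(G_{A_0})$ because $G_{A_0}$ is connected.

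\textbf{Main obstacle.} The bookkeeping conventions need care, since the algebra itself is routine. First, loops in $G_{A_0}$ cannot occur, because $e'$ is not a loop and so preimage edges join vertices over distinct endpoints; edges of $\Neigh{A_0}$ that become loops in $G_0$ still count twice on both sides. Second, I need $\vale w_0 = \vale w + \vale{w'} - 2$ only implicitly, since Lemma~\ref{lm:formula-rphi} bypasses valency entirely.
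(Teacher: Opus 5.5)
Your proof is correct. The paper gives no proof of this proposition here; it is recalled from Part~I and stated without argument, so there is nothing in the text to compare against. Your route is the natural direct one:
\begin{itemize}
\item sum the formula of Lemma~\ref{lm:formula-rphi} over $V(G_{A_0})$;
\item identify $m_{\dtmor_0}(A_0)$ with $M=\sum_{e\in E(G_{A_0})}m(e)$, using harmonicity over each endpoint of $e'$;
\item recognise $2(|E(G_{A_0})|-|V(G_{A_0})|+1)$ as $2g(G_{A_0})$.
\end{itemize}
The bookkeeping in Steps~1 and~3 checks out, including the treatment of non-contracted edges that become loops at $A_0$.

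Two conventions in Step~2 need a sentence each.
\begin{itemize}
\item An edge $f'\neq e'$ at $w_0$ can meet \emph{both} endpoints of $e'$ if it is parallel to $e'$ in $G'$. It then becomes a loop at $w_0$. Run your harmonicity argument half-edge by half-edge: each half-edge of $f'$ at $w_0$ lies at exactly one endpoint, and each again gives $M$. The literal edge-wise balancing condition of Definition~\ref{def-discrete-tropical-morphism} would double-count here.
\item If $w_0$ has no incident edge in $G'_0$ (that is, $G'$ is the single edge $e'$), harmonicity does not determine $m_{\dtmor_0}(A_0)$. The identity then holds exactly under the convention $m_{\dtmor_0}(A_0)=M$.
\end{itemize}
Neither situation arises for tree targets with at least two edges, which is the setting in which the paper uses the proposition.
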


If $g(G_{A_0}) = 0$ for all $A_0 \in \inv \dtmor_0(w_0)$, no cycles of $G$ are contracted, which for tree targets is equivalent to $g(H(\dtmor)) = g(H(\dtmor_0))$. In this case, Proposition~\ref{prop-rphi-under-contraction} ensures that $\dtmor_0$ is again a \DTmorp.

\subsection{Cone of sources} \label{sub-cone-of-sources} 
Given a \DTmor $\dtmor \colon G \to G'$, let $C_\dtmor$ denote the parameter cone of sources of all tropical morphisms with model $\dtmor$. For a vertex $A \in V(G)$, its \emph{non-dangling edge neighbourhood} $\Neighnd A \subseteq \Neigh A$ is the set of edges incident to $A$ that are not dangling, and its \emph{non-dangling valency} is $\ndval A = |\Neighnd A|$. To describe $C_\dtmor$ as a rational polyhedral cone, we realize the combinatorial type of $G$ via the following graph:

\begin{cons} \label{construction-Hphi}
	The graph $H(\dtmor)$ has vertex set $\{A \in V(G) \mid \ndval A \ge 3\}$; its edges are the maximal paths of $G$ whose endpoints lie in $V(H(\dtmor))$ and whose internal vertices have non-dangling valency $\ndval A = 2$.
\end{cons}

For a length function $\ell_{G'} \in C_{G'}$, the induced length function on $H(\dtmor)$ is
\[ y(h) = \sum_{e \in h} \frac{\ell_{G'}(\dtmor(e))}{m_\dtmor(e)}. \]
This defines a linear map $A_\dtmor \colon \RR^{E(G')} \to \RR^{E(H(\dtmor))}$, called the \emph{edge-length map}. In standard coordinates, $A_\dtmor$ is represented by the \emph{edge-length matrix}, with rows indexed by $E(H(\dtmor))$ and columns by $E(G')$. Its entries are:
\begin{align} \label{eq-aht} \tag{aht}
a_{ht} = \sum_{\substack{e \in h \\ \dtmor(e) = t}} \frac{1}{m_\dtmor(e)},
\end{align}
so that
\begin{align} \label{eq-yh} \tag{yh}
y(h) = \sum_{t \in E(G')} a_{ht} \ell_{G'}(t).
\end{align}
By construction, $(H(\dtmor), A_\dtmor(\ell_{G'}))$ is the deletion of dangling trees of the source of $(\dtmor, \ell_{G'})$. Since $H(\dtmor)$ has minimum valency at least 3, Lemma~\ref{lemma-combinatorial-type} identifies $H(\dtmor)$ with the combinatorial type of $G$. Thus, $C_\dtmor$ is parametrized by the rational polyhedral cone $A_\dtmor(\RRo^{E(G')})$, with which we identify it.

\begin{prop}[dimension formula] \label{proposition-dim-formula} 
	Let $\dtmor \colon G \to G'$ be a \DTmorp. Then
	\[|E(G')| + \sum_{v\in V(G')} (\ch v + \vale v - 3) = 2g(G) - g(G')(2d-3) + 2d - 5,\]
	where $\ch v = \sum_{A \in \inv \dtmor(v)} r_\dtmor(A)$. \qed
\end{prop}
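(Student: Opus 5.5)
The identity is a rearrangement of the Riemann--Hurwitz formula (Lemma~\ref{lm:riemann-hurwitz}) combined with two elementary counting facts about the target graph $G'$. No structural information about $\dtmor$ is needed beyond the fact that it is a \DTmor of degree $d=\deg\dtmor$ (Lemma~\ref{lm:degree-of-a-dtmor}). So the plan is to split the left-hand side into three sums and evaluate each separately:
\[
|E(G')| + \sum_{v\in V(G')}\ch v + \sum_{v\in V(G')}\vale v - 3|V(G')|.
\]

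First, the fibres $\inv\dtmor(v)$ for $v\in V(G')$ partition $V(G)$, since $\dtmor$ maps vertices to vertices and is non-degenerate. Hence
\[
\sum_{v}\ch v=\sum_{A\in V(G)} r_\dtmor(A)=2g(G)-2-d\,(2g(G')-2)
\]
by Lemma~\ref{lm:riemann-hurwitz}.

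Second, the handshake lemma gives $\sum_v \vale v = 2|E(G')|$, with loops counted twice, consistent with the convention for $\vale$. Third, the definition of genus gives $|V(G')| = |E(G')| - g(G') + 1$. Substituting these, the edge terms cancel:
\[
|E(G')|+2|E(G')|-3|E(G')| = 0,
\]
and the remaining contribution is $3g(G')-3$.

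Adding everything gives
\[
2g(G)-2-2d\,g(G')+2d+3g(G')-3 = 2g(G)-g(G')(2d-3)+2d-5,
\]
which is the claimed formula.

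There is no real obstacle. The only point needing care is that $\ch v$ sums over exactly the vertex fibres, so that summing over $v$ recovers $\sum_{A\in V(G)} r_\dtmor(A)$ with each vertex of $G$ counted once. This holds because a non-degenerate morphism sends every vertex of $G$ to a vertex of $G'$.
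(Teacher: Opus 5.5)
Your proof is correct and is essentially the intended argument: the paper imports this formula from Part~I without proof, and it follows exactly as you do it, from Riemann--Hurwitz (Lemma~\ref{lm:riemann-hurwitz}) for $\sum_v \ch v$, the handshake lemma, and $|V(G')| = |E(G')| - g(G') + 1$; your arithmetic checks out. One small correction: the vertex fibres partition $V(G)$ for any graph morphism, because vertices always map to vertices, so non-degeneracy is not what gives the partition; it only guarantees that no contracted edges lie in the fibres $\inv\dtmor(v)$.
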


The quantity $\ch v$ detects ramification above $v$. In particular, if $\ch v = 0$ at a divalent vertex $v$ with incident edges $t_1, t_2 \in E(G')$, then $a_{ht_1} = a_{ht_2}$ for every $h \in E(H(\dtmor))$, so the columns of $A_\dtmor$ incident to $v$ coincide, preventing full rank. Consequently, combinatorial types satisfy:

\begin{lm} \label{lm:combi-type-greater-than3}
	If $\dtmor$ is a combinatorial type, then $\ch v + \vale v \ge 3$ for all $v \in V(G')$. \qed
\end{lm}

We say that $\dtmor$ (or $C_\dtmor$) is \emph{full-rank} if the edge-length matrix $A_\dtmor$ is injective. In this case, $\dim C_\dtmor = |E(G')|$, and a metric graph $\mG \in C_\dtmor$ uniquely determines the realization $(\dtmor, \inv A_\dtmor(\ell_H))$.

\subsection{Maps to trees} \label{subsection-maps-to-trees}
When the target is a tree $T$, so that $g(T) = 0$, the dimension formula (Proposition~\ref{proposition-dim-formula}) simplifies to $2g + 2d - 5$. While full-rankness is a global condition on $A_\dtmor$, it imposes necessary local conditions that can be tracked during edge contractions and deformations.

\begin{de} \label{def-auxiliary-conditions} Let $\dtmor \colon G \to T$ be a \DTmor to a tree.
	\begin{itemize}
		\item A vertex $v \in V(T)$ is \emph{change-minimal} if $\ch v + \vale v = 3$. 
		\item A vertex $A \in V(G)$ satisfies the \emph{no-return} condition if $|\dtmor(\Neighnd A)| \ge 2$.
		\item An edge $h = \langle A_0, e_1, \dots, e_\mu, A_\mu \rangle \in E(H(\dtmor))$ satisfies the \emph{pass-once} condition if $\dtmor$ restricted to $\{e_i \in h \mid \dtmor(e_i) \text{ is not incident to a leaf}\}$ is injective.
	\end{itemize}
\end{de} 

\begin{de} \label{def-conditions} Let $\dtmor \colon G \to T$ be a \DTmorp.
	\begin{itemize}
		\item $\dtmor$ is \emph{change-minimal} if every vertex of $T$ is change-minimal.
		\item $\dtmor$ satisfies the \emph{dangling-no-glue} condition if every dangling element has multiplicity 1, so that $m(x) = 1$ for all dangling $x \in G$.
		\item $\dtmor$ satisfies the \emph{no-return} condition if every non-dangling vertex $A \in V(G)$ with $\dtmor(A)$ not a leaf satisfies no-return.
		\item $\dtmor$ satisfies the \emph{pass-once} condition if all edges of $H(\dtmor)$ satisfy pass-once.
	\end{itemize}
\end{de} 

If $\dtmor_0$ is the limit of $\dtmor$ under contracting an edge $t \in E(T)$ with ends $u, v$ such that $g(H(\dtmor)) = g(H(\dtmor_0))$, then by Lemma~\ref{lm:combi-type-greater-than3}:
\[ \ch w_0 + \vale w_0 - 3 = (\ch u + \vale u - 3) + (\ch v + \vale v - 3) + 1 \ge 1. \]
Hence, change-minimal \DTmors cannot arise as limits of one another.

\begin{lm}[{\cite[Lemmas~\ref{I-lemma-dangling-no-glue}, \ref{I-cor-no-return}, \ref{I-lemma-pass-once}]{dv20}}] \label{lm:properties}
	If $\dtmor$ is full-rank and change-minimal, then it satisfies the dangling-no-glue, no-return, and pass-once conditions. \qed
\end{lm} 

\begin{de}[top-dimensional cones] \label{definition-top-dimensional}
	A \DTmor $\dtmor \colon G \to T$ is \emph{top-dimensional} if its cone attains the maximum dimension given by Proposition~\ref{proposition-dim-formula}: $\dim C_\dtmor = 2g(G) + 2\deg \dtmor - 5$.
\end{de}

\begin{de} \label{def-possibly-full-rank}
	Motivated by Lemma~\ref{lm:properties}, a \DTmor $\dtmor$ is \emph{possibly full-rank} if it satisfies the dangling-no-glue and no-return conditions.
\end{de}

\begin{lm}[{\cite[Lemma~\ref{I-lem-rphi-nd}]{dv20}}, non-dangling $r_\dtmor$ formula] \label{lem-rphi-nd} 
	If $\dtmor$ satisfies dangling-no-glue, then for every $A \in V(G)$:
	\[\pushQED{\qed}
	r_\varphi(A) = \nddeg A - 2 + 2m(A) - \sum_{e \in \Neighnd A} m(e).\qedhere\popQED
	\]
\end{lm}

\begin{prop}[{\cite[Proposition~\ref{I-prop-local}]{dv20}}, local properties] \label{prop-local} 
	Let $\dtmor$ be a change-minimal \DTmor satisfying dangling-no-glue, and let $A \in V(G)$ satisfy $\nddeg A \le 3$. Then exactly one of the following holds:  
	\begin{itemize}
		\item[(r0)] If $r_\dtmor(A) = 0$, then $\dtmor|_{\Neighnd A}$ is injective, so $\nddeg A \le \vale \dtmor(A)$.
		\item[(r1)] If $r_\dtmor(A) = 1$, then $\dtmor(A)$ is divalent, $\nddeg A \in \{2, 3\}$, and $\vale A = 3$. If $\Neigh A = \{e, e', e''\}$ with $m(e) \ge m(e') \ge m(e'')$, then $m(e) = m(A) = m(e') + m(e'')$.
		\item[(r2)] If $r_\dtmor(A) = 2$, then $\dtmor(A)$ is a leaf, $\nddeg A = 2$, and $\vale A = 2$. If $\Neigh A = \{e, e'\}$, then $m(e) = m(e') = 1$.
		\item[(d)] If $G_{\mathrm{d}} \subseteq G$ is a connected subgraph whose edges are all dangling, then $\dtmor|_{G_{\mathrm{d}}}$ is injective. \qed
	\end{itemize}
\end{prop}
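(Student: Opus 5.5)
The plan is to derive all four items from Lemma~\ref{lem-rphi-nd} together with the balancing condition. The two inputs I rely on are change-minimality, meaning $\ch v + \vale v = 3$ at every $v \in V(T)$, and dangling-no-glue.

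\textbf{Items (r0)--(r2).} First I bound $\sum_{e\in\Neighnd A} m(e)$ from below. Harmonicity gives, for each edge $t$ of $T$ at $\dtmor(A)$, that $\sum_{e\in\Neigh A,\ \dtmor(e)=t} m(e) = m(A)$. Summing over $t$ yields
\[ \sum_{e\in\Neigh A} m(e) = m(A)\vale \dtmor(A). \]
Dangling edges have multiplicity $1$, so
\[ \sum_{e\in\Neighnd A} m(e) = m(A)\vale\dtmor(A) - (\vale A - \nddeg A). \]
Substituting into Lemma~\ref{lem-rphi-nd} recovers Definition~\ref{def-discrete-tropical-morphism}(d), so this alone gives nothing new.

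The real constraint comes from change-minimality. Since $\ch v \ge r_\dtmor(A)$, we get $\vale \dtmor(A) \le 3 - r_\dtmor(A)$.
\begin{itemize}
\item If $r_\dtmor(A)=2$, then $\dtmor(A)$ is a leaf. The formula $0 \le r = (\vale A -2) - m(A)(1-2)$ gives $\vale A = 4 - m(A)$. Each edge maps to the single leaf edge, so $\vale A \ge 1$ and $\sum m(e) = m(A)$. Lemma~\ref{lem-rphi-nd} with $\nddeg A\le 3$ and the non-dangling sum $\le m(A)$ then forces $\nddeg A = 2$ and all $m(e) = 1$; hence $\vale A = 2$.
\item If $r_\dtmor(A)=1$, then $\vale\dtmor(A)\le 2$. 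A leaf would give $r = \vale A - 2 + m(A)$, which is $\ge 1$ only when $\vale A + m(A) = 3$, i.e.\ $\vale A = 2$ and $m(A) = 1$; in that case both edges map to the same leaf edge, which is impossible with total multiplicity $1$. So $\dtmor(A)$ is divalent and $\vale A = 3$. Balancing on the two target edges splits $\{e,e',e''\}$ as $\{e\}$ and $\{e',e''\}$, giving $m(e) = m(A) = m(e') + m(e'')$. Finally, $\nddeg A \ge 2$ because otherwise $A$ would be dangling with a glued fibre.
\item If $r_\dtmor(A)=0$: whenever two non-dangling edges at $A$ share an image $t$, one can contract or smooth to produce a positive contribution at $A$. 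I plan to argue directly that the count $\sum_{e\in\Neighnd A}(m(e)-1) = 2(m(A)-1)$ together with $\nddeg A \le 3$ forces injectivity.
\end{itemize}

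\textbf{Item (d).} For a connected dangling subgraph, all multiplicities are $1$ and $r \ge 0$. Any two edges of $G_{\mathrm{d}}$ with the same image at a shared vertex $A$ would give $m(A) \ge 2$, contradicting local degree $1$. Global injectivity then follows because $G_{\mathrm{d}}$ is a tree mapping to a tree with a local-injectivity property.

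\textbf{Main obstacle.} I expect (r0) to be the hardest case. Injectivity on $\Neighnd A$ needs to combine change-minimality at $\dtmor(A)$ with the no-cancellation structure, and I would try the case split $\nddeg A \in \{2,3\}$ by hand.
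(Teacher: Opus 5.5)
Your strategy is the right one: use change-minimality only to get $\vale\dtmor(A)\le 3-r_\dtmor(A)$, then combine Lemma~\ref{lem-rphi-nd} with balancing. There is no in-paper proof to compare against, since the paper imports this statement from Part~I. As written, however, the proposal leaves (r0) unproved and has gaps in (r1), (r2) and (d).

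\textbf{(r0).} You only announce a plan for (r0), and you misjudge it: change-minimality plays no role, and the identity you wrote already suffices. Suppose $e_1\ne e_2$ in $\Neighnd A$ have the same image. Balancing over that edge gives $m(e_1)+m(e_2)\le m(A)$, and every other $e$ satisfies $m(e)\le m(A)$. Hence
\[
\sum_{e\in\Neighnd A}(m(e)-1)\;\le\;(\nddeg A-1)\,m(A)-\nddeg A .
\]
The right-hand side equals $m(A)-2$ or $2m(A)-3$ for $\nddeg A=2,3$. In both cases it is strictly less than $2(m(A)-1)$, which contradicts $r_\dtmor(A)=0$.

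\textbf{(r1) and (r2).} Both silently use the fact that no vertex has $\nddeg A=1$, and the inequalities you cite do not give it.
\begin{itemize}
\item In (r2), take $\vale A=m(A)=2$ with one dangling and one non-dangling edge, both of index $1$. This satisfies $r_\dtmor(A)=2$, Lemma~\ref{lem-rphi-nd}, and your bound on the non-dangling sum. So ``forces $\nddeg A=2$'' does not follow.
\item In (r1), a non-dangling $e$ of index $2$ with dangling $e',e''$ is likewise numerically consistent. So ``otherwise $A$ would be dangling'' is unjustified: a vertex with $\nddeg A=1$ is not dangling by definition.
\end{itemize}
Prove the fact directly. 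Since $g\ge 2$, dangling edges are bridges. Suppose $\Neighnd A=\{e\}$.
\begin{itemize}
\item If $e$ is a bridge, each dangling edge at $A$ cuts off a tree on the side away from $A$, since the far side of $e$ is not a tree. So the component of $G\setminus e$ containing $A$ is a tree, and $e$ would be dangling.
\item If $e$ is not a bridge, a cycle through $e$ leaves $A$ along a second non-bridge, hence non-dangling, edge. Here $G$ has no loops, because $\dtmor$ is non-degenerate onto a tree.
\end{itemize}
Also, in the leaf case of (r1), $\vale A+m(A)=3$ has a second solution $\vale A=1$, $m(A)=2$, which you omit. Exclude it by dangling-no-glue: the unique edge at a valency-$1$ vertex is dangling but would have index $2$.

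\textbf{(d).} You use $m=1$ at the vertices of $G_{\mathrm d}$. That is, you read $G_{\mathrm d}$ as consisting of dangling elements, possibly plus an attachment vertex met by a single edge of $G_{\mathrm d}$. Say so explicitly, because under the literal reading the claim is false. If $G_{\mathrm d}$ may contain a non-dangling vertex with two of its edges, injectivity fails. For example, when $m(B_i)\ge 4$, two of the $m(B_i)-2$ dangling paths attached at $B_i$ in Proposition~\ref{prop-caterpillar-ballot} have the same image. Under your reading, you still need to justify two steps:
\begin{itemize}
\item $G_{\mathrm d}$ is a tree, because its edges are bridges.
\item Local injectivity suffices, because non-backtracking paths map to non-backtracking paths in $T$, and such paths in a tree have distinct endpoints.
\end{itemize}
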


\begin{lm}[inheritance at limits] \label{lm:properties-inherit}
	Full-rankness, dangling-no-glue, no-return, and the pass-once condition are inherited by limits $\dtmor_0$ satisfying $g(H(\dtmor)) = g(H(\dtmor_0))$ {\cite[Lemmas~\ref{I-lemma-limit-dangling-no-glue}, \ref{I-lemma-limit-full-rank}, \ref{I-lemma-limit-no-return}, \ref{I-lemma-limit-pass-once}]{dv20}}. \qed
\end{lm} 

\begin{de}[labelling] \label{def-labelling}
	For $q=1,2$, let $\Aq \dtmor q \colon \Aq G q \to \Aq T q$ be \DTmors with edge labellings $\Aq {\lambda_T} q \colon E(\Aq T q) \to [|E(\Aq T q)|]$ and $\Aq {\lambda_H} q \colon E(H(\Aq \dtmor q)) \to [|E(H(\Aq \dtmor q))|]$. Let $\Aq {\dtmor_0} q$ be the limit at $t_k = \inv{(\Aq {\lambda_T} q)}(k)$, with induced labellings $\Aq {\lambda_{T,0}} q$ and $\Aq {\lambda_{H,0}} q$. If $|E(H(\Aq {\dtmor_0} q))| \ge |E(H(\Aq \dtmor q))| - 1$, the labellings are \emph{compatible at $t_k$} if there is an isomorphism $(\gamma, \tau) \colon \Aq \dtmor 1 _0 \to \Aq \dtmor 2 _0$ such that $\Aq {\lambda_{T,0}} 2 = \Aq {\lambda_{T,0}} 1 \circ \inv \tau$ and $\Aq {\lambda_{H,0}} 2 = \Aq {\lambda_{H,0}} 1 \circ \inv \wgamma$, where $\wgamma \colon H(\Aq {\dtmor_0} 1) \to H(\Aq {\dtmor_0} 2)$ is induced by $\gamma$.
\end{de}

\begin{lm}[{\cite[Lemma~\ref{I-lemma-limit-matrix-change}]{dv20}}, limit matrix] \label{lm:limit-matrix-change} 
	Let $\Aq \dtmor 1, \Aq \dtmor 2$ be \DTmors with labellings compatible at $t_k$, and edge-length matrices $(a_{ij}), (b_{ij})$ respectively. Then $a_{ij} = b_{ij}$ for all $j \ne k$. \qed
\end{lm}

\subsection{Graphical representation}
Graphically, we represent a \DTmor $\dtmor \colon G \to T$ following the graphical representation of \cite{dv20}: $G$ is viewed as the result of taking $\deg \dtmor$-many copies of $T$ and identifying vertices and edges. The index map $m_\dtmor(e)$ records the number of copies glued along $e$, with identifications indicated by dashed lines (see Example~\ref{ex-genus2}).

\subsection{Forbidden local parts} \label{subsection-forbidden-local-parts}
We now analyze local configurations that force the edge-length matrix $A_\dtmor$ to drop rank. This yields Proposition~\ref{proposition-at-most-two-weights}, which controls the variation of edge indices along paths in $H(\dtmor)$ and will later be used to compute lattice multiplicities for the cones $C_\dtmor$. Here $\F d g$ denotes the family of full-rank change-minimal degree-$d$ \DTmors with genus-$g$ source (defined formally in Section~\ref{sec-space}).

\begin{prop} \label{proposition-at-most-two-weights}
	Let $\dtmor$ arise from a sequence of cycle-preserving limits starting from an element of $\F d g$, and let $h = \langle A_0, e_1, A_1, \dots, A_{\nu - 1}, e_\nu, A_\nu \rangle \in E(H(\dtmor))$.
	\begin{itemize}
		\item If $h$ contains an edge mapping to an edge incident to a leaf of $T$, then $m(e_i) = 1$ for all $1 \le i \le \nu$, and either $\dtmor(A_0)$ or $\dtmor(A_\nu)$ is incident to an edge leading to a leaf.
		\item Otherwise, the edge indices along $h$ take at most two distinct values, which differ by at most~$1$ and vary monotonically: there exist integers $k \ge 1$ and $1 \le \mu \le \nu$ such that, up to reversing the orientation of $h$, $m(e_i) = k$ for $1 \le i \le \mu$ and $m(e_j) = k+1$ for $\mu < j \le \nu$ (with $\mu = \nu$ when the index is constant).
	\end{itemize}
\end{prop}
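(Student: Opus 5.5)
The approach is to prove both bullets first for $\dtmor \in \F d g$ itself, using only local information, and then transfer them along cycle-preserving limits. The local input is Proposition~\ref{prop-local} together with the conditions of Lemma~\ref{lm:properties}, which hold because $\dtmor$ is full-rank and change-minimal. Along $h = \langle A_0, e_1, \dots, e_\nu, A_\nu \rangle$, every internal vertex $A_i$ ($0<i<\nu$) has $\ndval A_i = 2$. So Proposition~\ref{prop-local} applies at each $A_i$, and exactly one of (r0), (r1), (r2) holds there.

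\textbf{Local transitions at an internal vertex.} In case (r0), $\dtmor$ is injective on $\Neighnd A_i$. By dangling-no-glue, every dangling edge at $A_i$ has index~$1$. Lemma~\ref{lem-rphi-nd} with $r=0$ and $\ndval=2$ then gives $2m(A_i) = m(e_i)+m(e_{i+1})$. Balancing along the two distinct target edges $\dtmor(e_i)\neq\dtmor(e_{i+1})$ yields $m(e_i)=m(e_{i+1})=m(A_i)$, so the index is unchanged. In case (r1), $\vale A_i=3$, so exactly one dangling edge of index~$1$ is attached, and $m(e)=m(e')+m(e'')$ with the largest index equal to $m(A_i)$. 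Since the dangling edge has index $1$, the two path edges have indices $k$ and $k+1$, so the index jumps by exactly one. In case (r2), $\dtmor(A_i)$ is a leaf and $m(e_i)=m(e_{i+1})=1$; this is a turning point over a leaf.

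\textbf{First bullet.} Suppose some $e_i$ maps to a leaf edge. Then $h$ must reach a leaf vertex, and no-return forbids it from turning back except at a leaf vertex, which must be an (r2) vertex with indices~$1$. I propagate index $1$ outward from that turning point. Each further step is of type (r0), keeping index~$1$, or of type (r1), changing it by~$1$. To exclude an (r1) step, I use that the index over a leaf-adjacent edge is $1$, combined with pass-once, which allows repeated images only on leaf edges. Then the path near the leaf retraces a leaf edge and must end at $A_0$ or $A_\nu$ adjacent to it. This gives the claim that $\dtmor(A_0)$ or $\dtmor(A_\nu)$ is incident to a leaf edge.

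\textbf{Second bullet: monotonicity.} If no edge of $h$ maps near a leaf, there are no (r2) vertices. The indices then change by $\pm1$ only at (r1) vertices. I must show there is at most one jump. Suppose there are two jumps, either $k\to k+1\to k+2$ or $k\to k+1\to k$. In either case I would exhibit a linear dependence among columns of $A_\dtmor$, or a violation of pass-once, contradicting full-rankness. This mirrors the forbidden-local-part arguments of Part~I: the segment of constant index between two jumps lies over a path in $T$ whose interior vertices are change-minimal divalent, and the dangling index-$1$ edges at the jumps force the other sheets to glue.

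\textbf{Main obstacle.} I expect this two-jump exclusion to be the hardest step. I would attempt it by showing that the target edges under the middle segment appear in $A_\dtmor$ only through $h$ and through sheets with proportional coefficients, so their column difference vanishes.

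\textbf{Passing to limits.} The remaining step is to pass from $\F d g$ to cycle-preserving limits. Contracting target edges contracts the corresponding source edges, so the index sequence along the image edge of $h$ is a subsequence of the original one. A subsequence of a monotone sequence with values in $\{k,k+1\}$ is again monotone with values in a set of size at most two. Leaf incidence is preserved, and the whole sequence of $1$'s stays $1$. If an entire path $h$ merges into a longer edge of $H(\dtmor_0)$, concatenating two monotone pieces could break monotonicity. To rule this out, I use Lemma~\ref{lm:properties-inherit}: the merged vertex had $\ndval=3$ with a non-dangling third edge that became dangling, which would give a glued dangling element and contradict the inherited dangling-no-glue.
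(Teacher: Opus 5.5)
\textbf{Gap in the first bullet.} Your local bookkeeping at internal vertices of $h$ matches the paper: (r0) keeps the index, (r1) changes it by exactly one, (r2) is a turn over a leaf with indices $1$. The subsequence reduction to $\F d g$ also matches. But the two places where full rank must actually be used are not handled.

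Pass-once and dangling-no-glue cannot exclude an (r1) jump. For example, $h$ may run over $t_3$ with index $2$ and drop to index $1$ at an (r1) vertex above a divalent vertex $w$, absorbing a dangling index-$1$ edge. It then continues over $t_2$ to a vertex above a trivalent $u$, goes up and down the leaf edge $t_*$, and leaves along the third edge at $u$. Every local condition holds here, including no-return, pass-once and dangling-no-glue. Only full rank rules this out.

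The paper does it with columns. The fibre of $t_*$ consists of the two index-$1$ edges of $h$ plus dangling edges, so $\bbv_*$ is supported on the row of $h$ alone. At the jump vertex $A$, the divalent image $\dtmor(A)$ has $\ch \dtmor(A)=1$, carried entirely by $A$. So every other non-dangling vertex above it is (r0) with $\nddeg=2$ and passes straight through with constant index. Hence $\bbv_2-\bbv_3=(1/m(e_2)-1/m(e_3))\,\mathbf e_h$, where $\mathbf e_h$ is the unit vector of the row of $h$. This is a nonzero multiple of $\bbv_*$, contradicting injectivity of $A_\dtmor$.

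Likewise, your claim that $h$ "must end" next to the leaf does not follow from pass-once. The paper gets it from Lemma~\ref{lm:bridge-and-loop} when $\vale u=2$, and from the column coincidence of Case~\{v2-r2-nd2-M\} when $\vale u=3$.

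\textbf{Gap in the second bullet.} Excluding two jumps is the whole content of the statement, and you leave it open. Your intended mechanism also rests on a false premise. Interior vertices of the constant stretch are (r0) with $\nddeg=2$, but (r0) only gives $\nddeg A\le \vale\dtmor(A)$. So their images may be trivalent, with other sheets branching off, and nothing forces columns under the stretch to agree. Moreover, for a double step $(k,k+1,k+2)$ no two columns coincide at all. The relation is the three-term one $2(k+1)\bbv_1=k\bbv_2+(k+2)\bbv_3$ of Case~\{v2-r2-nd2-kb\}.

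The paper contracts the target edges under the constant stretch to make the jumps adjacent; full rank is inherited by Lemma~\ref{lm:properties-inherit}. It then reads off the dependency from the forbidden local parts \{v2-r2-nd2-ka-1\}, \{v2-r2-nd2-ka-2\} and \{v2-r2-nd2-kb\}.

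The column observation above closes your gap uniformly, without any contraction. Each jump of $h$ over a divalent $w_j$ with incident edges $t_j,t_j'$ makes $\bbv_{t_j}-\bbv_{t_j'}$ a nonzero multiple of $\mathbf e_h$. Two jumps lie over distinct vertices, since otherwise $\ch w_1\ge 2$. So the two differences give a nontrivial linear relation among at most four columns. This holds whatever the length of the stretch, and whether the profile is a peak, a valley or a double step.

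Finally, your worry about two edges of $H(\dtmor)$ concatenating in a limit is moot. $H(\dtmor_0)$ is obtained from $H(\dtmor)$ by contracting edges, so valencies only increase and the subsequence argument suffices.
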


Before going into the proof of Proposition~\ref{proposition-at-most-two-weights}, we catalogue the forbidden local parts grouped by common limits, retaining the case notation from \cite[Section~\ref{I-subsection-case-work}]{dv20}. In each case, let $\bbv_i$ denote the column of $A_\dtmor$ corresponding to $t_i \in E(T)$.

\begin{itemize}[leftmargin=*] 
	\item Case \{v2-r2-nd2-ka-1\}: Let $\Aq \dtmor 1$ and $\Aq \dtmor 2$ have local parts as shown below, with limit $\dtmor_0$ obtained by contracting $t_1$.
	\vspace{0.5em}
	
	\noindent\begin{minipage}[t]{.21\textwidth}
		\centering
		\begin{overpic}[scale=0.7]{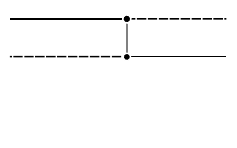} 
			\put (49,58) {\scalebox{1}{$A$}}
			\put (33,31) {\scalebox{1}{$t_2$}}
			\put (67,31) {\scalebox{1}{$t_3$}}
			\put (48,31) {\scalebox{1}{$w_0$}}	
			\put (-3,53) {\scalebox{1}{$e$}}	
			\put (97,36) {\scalebox{1}{$e'$}}	 
		\end{overpic}
		
		$\dtmor_0$
	\end{minipage}\hspace{0.8em}
	\begin{minipage}[t]{.21\textwidth} 
		\centering
		\begin{overpic}[scale=0.7]{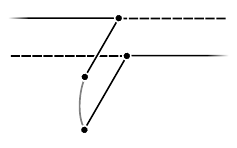}     
			\put (47,15) {\scalebox{1}{$t_1$}}
			\put (39,4) {\scalebox{1}{$u$}} 
			\put (53,31) {\scalebox{1}{$v$}} 
		\end{overpic} 
		
		$\Aq \dtmor 1$
	\end{minipage}\hspace{0.3em}
	\begin{minipage}[t]{.21\textwidth} 
		\centering
		\begin{overpic}[scale=0.7]{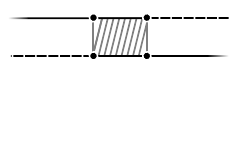}       
			\put (48,30) {\scalebox{1}{$t_1$}}
			\put (37,30) {\scalebox{1}{$u$}} 
			\put (59,30) {\scalebox{1}{$v$}} 
		\end{overpic}
		
		$\Aq \dtmor 2$
	\end{minipage}
	\begin{minipage}[b]{.29\textwidth} 
		\centering
		\begin{align*}
		\begin{matrix}
		\bbv_2 	& \bbv_3 	& \Aq {\bbv_1} 1 	& \Aq {\bbv_1} 2 \\
		1 		& 1			& 2					& \frac{1}{2}  \\
		a_i		& a_i		& 0				  	& a_i			
		\end{matrix}
		\end{align*}
	\end{minipage}
	\vspace{0.5em}
	
	Since columns $\bbv_2$ and $\bbv_3$ are identical, $\Aq \dtmor q$ is not full-rank.
	
	\item Case \{v2-r2-nd2-ka-2\}: Let $\Aq \dtmor 1$ and $\Aq \dtmor 2$ have local parts as shown below, with limit $\dtmor_0$ obtained by contracting $t_1$.
	\vspace{0.5em}
	
	\noindent\begin{minipage}[t]{.21\textwidth}
		\centering
		\begin{overpic}[scale=0.7]{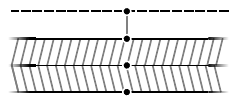} 
			\put (50,41) {\scalebox{1}{$A$}}
			\put (33,-4) {\scalebox{1}{$t_2$}}
			\put (68,-4) {\scalebox{1}{$t_3$}}
			\put (48,-4) {\scalebox{1}{$w_0$}}	
			\put (-3,13) {\scalebox{1}{$e_1$}}	
			\put (97,13) {\scalebox{1}{$e_2$}}	 
		\end{overpic}
		
		\vspace{1em}
		$\dtmor_0$
	\end{minipage}\hspace{0.3em}
	\begin{minipage}[t]{.21\textwidth} 
		\centering
		\begin{overpic}[scale=0.7]{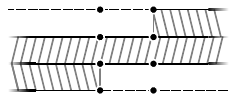}     
			\put (51,-4) {\scalebox{1}{$t_1$}}
			\put (40,-4) {\scalebox{1}{$u$}} 
			\put (62,-4) {\scalebox{1}{$v$}} 
		\end{overpic} 
		
		\vspace{1em}
		$\Aq \dtmor 1$
	\end{minipage}\hspace{0.3em}
	\begin{minipage}[t]{.21\textwidth}  
		\centering
		\begin{overpic}[scale=0.7]{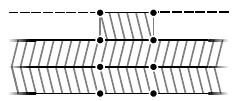}       
			\put (51,-4) {\scalebox{1}{$t_1$}}
			\put (40,-4) {\scalebox{1}{$u$}} 
			\put (62,-4) {\scalebox{1}{$v$}} 
		\end{overpic}
		
		\vspace{1em}
		$\Aq \dtmor 2$
	\end{minipage}
	\begin{minipage}[b]{.29\textwidth} 
		\centering
		\begin{align*}
		\begin{matrix}
		\bbv_2 			& \bbv_3 			& \Aq {\bbv_1} 1 	& \Aq {\bbv_1} 2 \\
		\frac{1}{k} 	& \frac{1}{k}		& \frac{1}{k-1}		& \frac{1}{k+1}  \\
		a_i				& a_i				& a_i				& a_i			
		\end{matrix}
		\end{align*}
	\end{minipage}
	\vspace{0.5em}
	
	Again $\bbv_2 = \bbv_3$, so $\Aq \dtmor q$ is not full-rank.
	
	\item Case \{v2-r2-nd2-kb\}: Let $\Aq \dtmor 1$ have the local part shown below, with limit $\dtmor_0$ obtained by contracting $t_1$.
	\vspace{0.5em}
	
	\noindent\begin{minipage}[t]{.30\textwidth}
		\centering
		\begin{overpic}[scale=1]{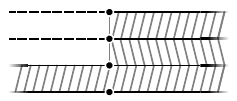} 
			\put (43,41) {\scalebox{1}{$A$}}
			\put (28,-4) {\scalebox{1}{$t_2$}}
			\put (61,-4) {\scalebox{1}{$t_3$}}
			\put (43,-4) {\scalebox{1}{$w_0$}}	
			\put (-3,7) {\scalebox{1}{$e_1$}}	
			\put (97,18) {\scalebox{1}{$e_2$}}	 
		\end{overpic}
		
		\vspace{1em}
		$\dtmor_0$
	\end{minipage}\hspace{0.8em}
	\begin{minipage}[t]{.30\textwidth} 
		\centering
		\begin{overpic}[scale=1]{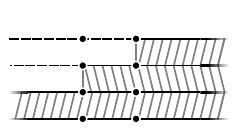}     
			\put (45,-4) {\scalebox{1}{$t_1$}}
			\put (32,-4) {\scalebox{1}{$u$}} 
			\put (56,-4) {\scalebox{1}{$v$}} 
		\end{overpic} 
		
		\vspace{1em}
		$\Aq \dtmor 1$
	\end{minipage}
	\begin{minipage}[b]{.29\textwidth} 
		\centering
		\begin{align*}
		\begin{matrix}
		\bbv_2 			& \bbv_3 				& \Aq {\bbv_1} 1 	\\
		\frac{1}{k} 	& \frac{1}{k+2}			& \frac{1}{k+1}		\\
		a_i				& a_i					& a_i						
		\end{matrix}
		\end{align*}
	\end{minipage}
	\vspace{0.5em}
	
	Since $2(k+1)\Aq {\bbv_1} 1 = k\bbv_2 + (k+2)\bbv_3$, these columns are linearly dependent, so $\Aq \dtmor 1$ is not full-rank.

	\item Case \{v2-r2-nd2-M\}: If two non-dangling edges $e_2, e_3$ incident to $A$ have equal index $m(e_2) = m(e_3)$ while neither leads to a leaf, their corresponding columns $\bbv_2, \bbv_3$ in $A_\dtmor$ are identical across all rows, forcing $A_\dtmor$ to drop rank (see \cite[Section~\ref{I-subsection-case-work}]{dv20}).
\end{itemize} 

\begin{proof}[Proof of Proposition~\ref{proposition-at-most-two-weights}]
	Edge contractions in $T$ preserve the multiplicities $m(e)$ of all uncontracted edges, and every path $h \in E(H(\dtmor))$ contracts to a path in the limit whose sequence of edge multiplicities is a subsequence of the original one. Therefore, if the asserted condition failed for a limit morphism, it would already fail for any change-minimal ancestor $\dtmor \in \F d g$ from which it specializes. It thus suffices to prove the proposition for full-rank change-minimal $\dtmor$.
	
	Let $e_i, e_{i+1}$ be adjacent edges in $h$ meeting at the internal divalent vertex $A_i$. By Cases~(r0), (r1), and (r2) of Proposition~\ref{prop-local}, the edge indices on adjacent edges differ by at most~1: $|m(e_i) - m(e_{i+1})| \le 1$.
	
	\medskip\noindent\textbf{Case 1: $h$ passes above a leaf.}
	Suppose $h$ contains an edge $e_*$ mapped to an edge $t_* \in E(T)$ incident to a leaf $v \in V(T)$, with other endpoint $u$. By dangling-no-glue, $m(e_*) = 1$. If $\vale u = 2$, then $\dtmor(A_0)$ or $\dtmor(A_\nu)$ leads to a leaf, and the claim follows by \cite[Lemma~\ref{I-lemma-loop-bridge}]{dv20}. If $\vale u = 3$, suppose for contradiction that some edge in $h$ has index strictly greater than~1. Since $m(e_*) = 1$ and adjacent indices differ by at most~1, there exists a pair of adjacent edges $e_2, e_3 \in h$ with $m(e_2) \ne m(e_3)$, say with common vertex $A$. By Proposition~\ref{prop-local}, $\vale \dtmor(A) = 2$, so $t_* = \dtmor(e_*)$, $t_2 = \dtmor(e_2)$, and $t_3 = \dtmor(e_3)$ are three distinct edges of $T$. 
	
	By change-minimality and Proposition~\ref{prop-local}, the leaf column $\bbv_*$ has a single non-zero entry (corresponding to the row of $h$), while the columns $\bbv_2$ and $\bbv_3$ coincide outside the row of $h$. Hence the difference $\bbv_2 - \bbv_3$ is supported solely on the row of $h$, with non-zero value $1/m(e_2) - 1/m(e_3) \ne 0$. This makes $\bbv_2 - \bbv_3$ a non-zero scalar multiple of $\bbv_*$, contradicting that $A_\dtmor$ has full column rank. Furthermore, if neither $\dtmor(A_0)$ nor $\dtmor(A_\nu)$ leads to a leaf, $\dtmor$ would contain the configuration of Case~\{v2-r2-nd2-M\}, where columns $\bbv_2$ and $\bbv_3$ are equal, again contradicting full rank. Thus $m(e_i) = 1$ for all $1 \le i \le \nu$.
	
	\medskip\noindent\textbf{Case 2: $h$ does not pass above a leaf.}
	Suppose for contradiction that the sequence of indices along $h$ is not monotonic with values in $\{k, k+1\}$. Since adjacent indices differ by at most~1, this failure implies the existence of three indices along $h$ exhibiting either:
	\begin{enumerate}[(i)]
		\item a \emph{direction change} (peak or valley): indices $(k, k+1, k)$ or $(k+1, k, k+1)$; or
		\item a \emph{double-step}: indices $(k, k+1, k+2)$ (or $(k+2, k+1, k)$).
	\end{enumerate}
	By contracting all intermediate edges in $T$ along which the index remains constant, we obtain a limit morphism $\dtmor_0$ containing three consecutive edges $e_\alpha, e_{\alpha+1}, e_{\alpha+2} \in h$ realizing one of these profiles.
	
	Contracting the target edge of the middle edge $e_{\alpha+1}$ and inspecting the regrowing possibilities via the trivalent deformation cases (see Appendix~\ref{appendix-trivalent-deformation} and \cite[Section~\ref{I-subsection-case-work}]{dv20}) yields:
	\begin{itemize}
		\item In the peak/valley case $(k, k+1, k)$, contracting $e_{\alpha+1}$ leaves the outer edges with equal indices $k$ and $k$. Regrowing the contracted edge produces a full-rank change-minimal morphism containing the local part of \textbf{Case~\{v2-r2-nd2-ka-1\}} (if $k=1$) or \textbf{Case~\{v2-r2-nd2-ka-2\}} (if $k \ge 2$). In both cases, columns $\bbv_2$ and $\bbv_3$ are identical ($\bbv_2 = \bbv_3$), so $A_\dtmor$ drops rank.
		\item In the double-step case $(k, k+1, k+2)$, contracting $e_{\alpha+1}$ produces the local part of \textbf{Case~\{v2-r2-nd2-kb\}}, where $2(k+1)\Aq{\bbv_1}1 = k\bbv_2 + (k+2)\bbv_3$, so columns $\bbv_2, \bbv_3, \Aq{\bbv_1}1$ are linearly dependent and $A_\dtmor$ drops rank.
	\end{itemize}
	In every case, the resulting morphism is not full-rank, contradicting that it arises as a limit of an element of $\F d g$. Hence edge indices along $h$ take at most two distinct values, which differ by at most~1 and vary monotonically.
\end{proof}


\section{A space of full-rank tropical morphisms} \label{section-a-space-of-tropical-morphisms}
Throughout this section, let $g \ge 2$ and $d \le \lceil g/2\rceil + 1$ be integers. We construct the parameter space $\TM d g$ of full-rank tropical morphisms of degree~$d$ from genus-$g$ metric graphs to metric trees, equipped with the canonical projection $\Pi \colon \TM d g \to \MTrop g$ sending $(\mH, \tmor) \mapsto \mH$. We characterise the symmetries of $\TM d g$, establish the finiteness of the fibers of $\Pi$, and determine the fiber $\inv \Pi(\mH)$ over generic caterpillars of loops, where it consists of exactly $C_{g/2}$ points connected through codimension-1 cells.

\subsection{The space} \label{sec-space}
We assemble the space of tropical morphisms by glueing together the cones of full-rank \DTmors of degree $d$ whose source has genus $g$. As established in Subsection~\ref{subsection-maps-to-trees}, change-minimal \DTmors are the natural candidates for the top-dimensional cells, since they cannot be obtained by edge contractions. We denote by $\F d g$ the set of full-rank change-minimal degree-$d$ \DTmors $\dtmor$ with $g(H(\dtmor)) = g$; by Lemma~\ref{lm:properties}, every element of $\F d g$ automatically satisfies the dangling-no-glue, no-return, and pass-once conditions. When $g$ is even and $d = g/2 + 1$, we write $\FD g$ and call its elements \emph{full-dimensional} \DTmorsp{}.

For $\dtmor \in \F d g$, the open cone $C_\dtmor$ parameterises tropical morphisms because $\dtmor$ has full rank. By Lemma~\ref{lm:limit-matrix-change}, if $\dtmor_0$ is a limit of $\dtmor$, then $C_{\dtmor_0}$ is a relatively open, codimension-1 face of the closure of $C_\dtmor$ in $\RR^{E(H(\dtmor))}$. To enable glueing across boundaries, we add these limit faces to $C_\dtmor$.

We adopt the following convention for length vectors $\ell_T \in \RRgo^{E(T)}$ that can have zero lengths: contracting all edges of $T$ with $\ell_T(e) = 0$ produces a limit morphism $\dtmor_0 \colon G_0 \to T_0$, whose isomorphism class is independent of the contraction order. The pair $(\dtmor, \ell_T)$ then designates the tropical morphism $(\dtmor_0, \ell_T|_{T_0})$. We define the \emph{completed cone} $\overline{C}_\dtmor$ as the image under $A_\dtmor$ of all points $\ell_T \in \RRgo^{E(T)}$ such that the source of $(\dtmor, \ell_T)$ has genus $g$. Because $\dtmor$ is full-rank, the matrix $A_\dtmor$ is injective, so each point $\ell_H \in \overline{C}_\dtmor$ corresponds to a unique length vector $\ell_T = \inv A_\dtmor(\ell_H) \in \RRgo^{E(T)}$. This vector determines a unique pair $(\mH, \tmor)$ consisting of the metric graph $\mH$ obtained from $(H, \ell_H)$ by contracting zero-length edges and the tropical morphism $\tmor = (\dtmor, \ell_T)$ from a tropical modification $\mG$ of $\mH$ to the metric tree $\mT = (T, \ell_T)$.

\begin{de}[space of full-rank tropical morphisms] \label{definition-TMSpace} 
	Let $d \le \lceil g/2 \rceil + 1$. The \emph{space of full-rank tropical morphisms} is the topological space
	\[
	\TM d g = \left( \bigsqcup_{\dtmor \in \F d g} \overline{C}_\dtmor \right) \Big/ \cong,
	\]
	equipped with the quotient topology, where $\cong$ identifies points corresponding to isomorphic tropical morphisms. The canonical projection is $\Pi \colon \TM d g \to \MTrop g$, given by $(\mH, \tmor) \mapsto \mH$.
\end{de}

Just as in the construction of $\MTrop g$, the geometry of $\TM d g$ involves self-identifications: a single cone $\overline{C}_\dtmor$ may have distinct points identified under $\cong$, glueing the cone onto itself along symmetries in the group $\Aut_\dtmor H(\dtmor)$ characterised in Subsection~\ref{subsection-symmetries-in-tmdg}. Consequently, $\TM d g$ is a generalised cone complex in the sense of~\cite[Remark~2.6.1]{acp15}.

\begin{ex}[genus 2] \label{ex-genus2} 
	We determine the full-dimensional cones $\FD 2$ in $\TM 2 2$. Here $|E(T)| = 2d + 2g - 5 = 3$, which coincides with the dimension $3g - 3 = 3$ of $\MTrop 2$. There are two trees with three edges: the star $\Aq T 1$ whose single internal vertex is adjacent to three leaves, and the path $\Aq T 2$ of length~3.
	
	Change-minimality determines the fiber above each leaf and each leaf edge. Over $\Aq T 1$, this determines the morphism $\Aq \dtmor 1$ completely, whose source combinatorial type $\Aq H 1$ is the theta graph. Over $\Aq T 2$, the endpoints of the middle edge must have trivalent preimages in $\Aq H 2$, so the middle edge in $\Aq H 2$ has index $m(e) = 2$; the resulting source type $\Aq H 2$ is the dumbbell graph, which is the caterpillar of loops $\HCL 2$. Computing the edge-length matrices gives diagonal matrices:
	
	\vspace{0.5em}
	\noindent\begin{minipage}{0.24\textwidth}
		\centering
		\begin{overpic}[scale=1.4]{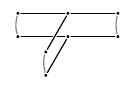}
		\end{overpic}  
	\end{minipage}
	\begin{minipage}{0.20\textwidth}
		\centering
		\begin{overpic}[scale=1]{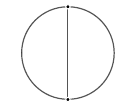}
		\end{overpic}
	\end{minipage}
	\begin{minipage}{0.25\textwidth}
		\centering 
		\begin{overpic}[scale=1.5]{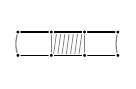}
		\end{overpic}  
	\end{minipage}
	\begin{minipage}{0.27\textwidth}
		\centering
		\begin{overpic}[scale=0.8]{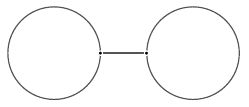}
		\end{overpic}
	\end{minipage}
	
	\noindent\begin{minipage}[t]{0.24\textwidth}
		\centering
		$\Aq \dtmor 1$
	\end{minipage}
	\begin{minipage}[t]{0.20\textwidth}
		\centering
		$\Aq H 1$
	\end{minipage}
	\begin{minipage}[t]{0.25\textwidth}
		\centering 
		$\Aq \dtmor 2$
	\end{minipage}
	\begin{minipage}[t]{0.27\textwidth}
		\centering
		$\Aq H 2$
	\end{minipage} 
	
	\vspace{0.5em}
	\noindent
	\begin{minipage}[t]{0.44\textwidth}
		\centering
		$\Aq A 1 = \begin{pmatrix}
		2 & 0 & 0\\
		0 & 2 & 0\\
		0 & 0 & 2
		\end{pmatrix}$
	\end{minipage}
	\begin{minipage}[t]{0.49\textwidth}
		\centering
		$\Aq A 2 = \begin{pmatrix}
		2 & 0 & 0\\
		0 & 1/2 & 0\\
		0 & 0 & 2
		\end{pmatrix}$
	\end{minipage}
	\vspace{0.5em}
	
	Since both matrices are invertible, $\Aq \dtmor 1$ and $\Aq \dtmor 2$ belong to $\FD 2$. The set $\mathbb G_2$ of genus-2 trivalent combinatorial types consists of only the theta graph $\Aq H 1$ and the dumbbell graph $\Aq H 2$. Because the completed cones $\overline{C}_{\Aq \dtmor 1}$ and $\overline{C}_{\Aq \dtmor 2}$ project onto the maximal cones $\overline{C}_{\Aq H 1}$ and $\overline{C}_{\Aq H 2}$ of $\MTrop 2$, their union surjects onto $\MTrop 2$, so $\Pi \colon \TM 2 2 \to \MTrop 2$ is surjective.
\end{ex}

\subsection{Symmetries in \texorpdfstring{$\TM d g$}{TM(d,g)}} \label{subsection-symmetries-in-tmdg}
Let $\dtmor$ be a \DTmorp. Parallel to the description of symmetries in $\MTrop g$ from Subsection~\ref{subsection-symmetry-in-mtropg}, we characterise the points in $C_\dtmor$ that define isomorphic tropical morphisms. Recall from Definition~\ref{definition-tropical-iso} that two tropical morphisms $\Aq \tmor 1, \Aq \tmor 2$ are isomorphic if there exist isometries $\Psi, \Upsilon$ such that $\Aq \tmor 2 \circ \Psi = \Upsilon \circ \Aq \tmor 1$.

Because $\Psi$ and $\Upsilon$ are isometries, they preserve essential vertex sets (Lemma~\ref{lemma-isometry-to-isomorphism}), hence restrict to isomorphisms of essential models $(\gamma, \tau)$ commuting with metric lengths and preserving the index map.

Every point of $\TM d g$ belongs to the quotient of an open cone $C_\dtmor / \cong$, where $\dtmor$ is a full-rank \DTmor obtained from an element of $\FD g$ (or $\F d g$) through a finite sequence of edge contractions. For any metric graph $\mH$, the fiber decomposes as
\begin{equation} \label{eq-S} \tag{S}
|\inv \Pi(\mH)| = \sum_\dtmor |\inv \Pi(\mH) \cap (C_\dtmor / \cong)|,
\end{equation}
where the sum ranges over all such full-rank $\dtmor$ that realize $\mH$.

Because $\dtmor$ has full rank, the matrix $A_\dtmor$ is injective (and invertible when $d = g/2 + 1$), so the metric on $T$ is uniquely determined by the metric on $H(\dtmor)$. An automorphism of $\dtmor \colon G \to T$ is a pair of graph isomorphisms $(\gamma_G, \tau) \in \Aut G \times \Aut T$ such that $\dtmor \circ \gamma_G = \tau \circ \dtmor$ and $m_\dtmor = m_\dtmor \circ \inv \gamma_G$. Because $\gamma_G$ preserves non-dangling and essential vertices, it descends by dangling deletion and contraction of divalent vertices to an automorphism of the combinatorial core $H(\dtmor)$. We denote by
\[
\Aut_\dtmor H(\dtmor) \subseteq \Aut H(\dtmor)
\]
the subgroup of automorphisms of $H(\dtmor)$ that arise from automorphisms of $\dtmor$. The group $\Aut_\dtmor H(\dtmor)$ acts on $C_\dtmor$ by length pullback: $\gamma \cdot y = y \circ \inv \gamma$. The number of ways that a fixed $\dtmor$ can realize $\mH$ is then determined by the orbit structure under this action:

\begin{lm} \label{lemma-congruence-iff-autphi-orbit}
	Let $\Aq y 1, \Aq y 2 \in C_\dtmor$. Then $\Aq y 1 \cong \Aq y 2$ if and only if $\Aq y 1$ and $\Aq y 2$ lie in the same $\Aut_\dtmor H(\dtmor)$-orbit.
\end{lm}

\begin{proof}
	An isomorphism $(\Psi, \Upsilon) \colon \Aq \tmor 1 \to \Aq \tmor 2$ preserves essential vertex sets and induces an isomorphism of essential models $(\gamma, \tau) \colon \dtmor \to \dtmor$ preserving lengths and slopes, which yields an element $\gamma \in \Aut_\dtmor H(\dtmor)$ sending $\Aq y 1$ to $\Aq y 2$. Conversely, any $\gamma \in \Aut_\dtmor H(\dtmor)$ extends to an automorphism of $\dtmor$, and if $\Aq y 2 = \gamma \cdot \Aq y 1$, the length-preserving maps on edge segments yield isometries $\Psi$ and $\Upsilon$ witnessing $\Aq \tmor 1 \cong \Aq \tmor 2$.
\end{proof}

\begin{lm} \label{lemma-count-realizations}
	Let $y \in C_\dtmor$, and let $\mH = (H(\dtmor), y)$ be the corresponding metric graph. Then
	\[
	|\inv \Pi(\mH) \cap (C_\dtmor / \cong)| = \frac{|(\orb {\Aut H} y) \cap C_\dtmor|}{|\orb {\Aut_\dtmor H(\dtmor)} y|}.
	\]
	In particular, if the edge lengths in $y$ are general, then
	\[
	|\inv \Pi(\mH) \cap (C_\dtmor / \cong)| = \frac{|(\orb {\Aut H} y) \cap C_\dtmor|}{|\Aut_\dtmor H(\dtmor)|}.
	\]
\end{lm}

\begin{proof}
	By Lemma~\ref{lemma-isometry-to-isomorphism}, the points in $C_\dtmor$ that define metric graphs isometric to $\mH = (H, y)$ are precisely the points in $(\orb {\Aut H} y) \cap C_\dtmor$. By Lemma~\ref{lemma-congruence-iff-autphi-orbit}, two such points define isomorphic tropical morphisms if and only if they belong to the same $\Aut_\dtmor H(\dtmor)$-orbit. Because $\Aut_\dtmor H(\dtmor) \subseteq \Aut H$, its action partitions $(\orb {\Aut H} y) \cap C_\dtmor$ into orbits of cardinality $|\orb {\Aut_\dtmor H(\dtmor)} y|$, proving the first formula. When the edge lengths in $y$ are general, the stabilizer of $y$ in $\Aut H$ is trivial, so $|\orb {\Aut_\dtmor H(\dtmor)} y| = |\Aut_\dtmor H(\dtmor)|$.
\end{proof}

Because the set $\F d g$ (and in particular $\FD g$) of combinatorial types is finite, and all automorphism groups are finite, equation~\eqref{eq-S} immediately implies that the fiber $\inv \Pi(\mH)$ is finite for every $\mH \in \MTrop g$.

\subsection{Caterpillars of loops} \label{subsection-caterpillar-of-loops}
Throughout this subsection, let $g$ be even, say $g = 2g'$, and let $d = g'+1$. We prove Theorem~\ref{thm} for the class of metric graphs called \emph{caterpillars of loops}.

\begin{de}[caterpillar of loops] \label{def-caterpillar-loops} 
	The genus-$g$ \emph{caterpillar of loops}, denoted $\HCL g$, is the graph obtained from a path of length $g-1$,
	\[
	\langle A_1, h_1, B_2, h_2, \dots, B_{g-2}, h_{g-2}, B_{g-1}, h_{g-1}, A_g \rangle,
	\]
	by attaching a self-loop at each of the two end vertices $A_1$ and $A_g$, and attaching a lollipop at each interior vertex $B_i$ for $i = 2, \dots, g-1$; that is, introducing a vertex $A_i$ with a self-loop and joining $A_i$ to $B_i$ by a bridge. See Figure~\ref{fig-chain}.
\end{de}

\begin{figure}[htbp]
	\centering
	\begin{overpic}[scale=1.4]{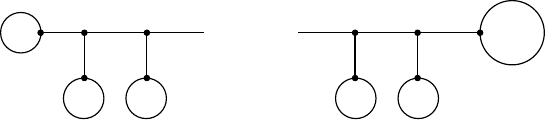} 
		\put (44,16) {\scalebox{1.3}{$\dots$}}
		\put (14.5,17) {\scalebox{1}{$B_2$}} 
		\put (26,17) {\scalebox{1}{$B_3$}} 
		\put (64,17) {\scalebox{1}{$B_{g-2}$}}
		\put (75.5,17) {\scalebox{1}{$B_{g-1}$}}
		\put (8,17) {\scalebox{1}{$A_1$}}
		\put (16.5,8) {\scalebox{1}{$A_2$}}
		\put (28.5,8) {\scalebox{1}{$A_3$}}
		\put (66,8) {\scalebox{1}{$A_{g-2}$}}
		\put (77.4,8) {\scalebox{1}{$A_{g-1}$}}
		\put (89,17) {\scalebox{1}{$A_{g}$}}
		\put (11,13) {\scalebox{1}{$h_1$}}
		\put (20.5,13) {\scalebox{1}{$h_2$}}
		\put (69,13) {\scalebox{1}{$h_{g-2}$}}
		\put (80,13) {\scalebox{1}{$h_{g-1}$}}
	\end{overpic}
	\caption{\label{fig-chain}A caterpillar of loops $\HCL g$.}
\end{figure} 

We denote by $\FDCL g \subset \FD g$ the set of full-dimensional \DTmors $\dtmor$ such that $H(\dtmor) \cong \HCL g$. These morphisms enjoy optimal geometric properties: each cone $C_\dtmor$ is the positive orthant, the combinatorial types in $\FDCL g$ are in bijection with length-$g$ ballot sequences, and the cones glue along codimension-1 faces into a connected family in $\TM {g'+1} g$.

\begin{ex} \label{ex-chain-loops}
	For genus $g = 4$ and degree $d = 3$, Figure~\ref{fig:genus4-caterpillars} shows the two \DTmors in $\FDCL 4$. By Proposition~\ref{prop-caterpillar-ballot} and Lemma~\ref{lm:catalan-many}, these are the only two such morphisms, corresponding to the $C_2 = 2$ ballot sequences of length~4.
	
	\begin{figure}[htbp]
		\centering
		\begin{minipage}{0.48\textwidth}
			\centering
			\begin{overpic}[scale=1.3]{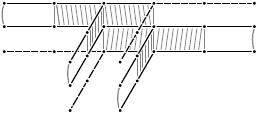}
			\end{overpic} 
			\[\dtmor_1\] 
		\end{minipage}
		\begin{minipage}{0.48\textwidth}
			\centering
			\begin{overpic}[scale=1.3]{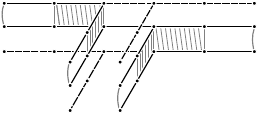}
			\end{overpic}  
			\[\dtmor_2\]
		\end{minipage}
		\caption{The two morphisms in $\FDCL 4$.}
		\label{fig:genus4-caterpillars}
	\end{figure}
\end{ex}

\begin{re} \label{re:dramatic-example}
	The divisorial gonality of a caterpillar of loops is~2 for every genus $g$, as witnessed by the rank-1 divisor $D = 2 A_1$. In~\cite{cha13}, Chan investigated tree gonality defined via harmonic morphisms from tropical modifications to trees without imposing the Riemann--Hurwitz condition. In our setting, every morphism in $\FDCL g$ has degree $g/2 + 1$. Caterpillars of loops thus illustrate that once the Riemann--Hurwitz condition is enforced, the gap between tree gonality and divisorial gonality can be arbitrarily large.
\end{re}

The clean combinatorial behaviour of caterpillars of loops originates from the local rigidity of lollipops: a lollipop in $H(\dtmor)$ maps to a length-2 path in $T$ ending at a leaf, and the fiber above this path is uniquely determined.

\begin{lm} \label{lm:bridge-and-loop}
	Let $\dtmor$ be change-minimal and full-rank. Let $A$ be a trivalent vertex of $H(\dtmor)$ incident to a bridge $h_b$ and a loop $h_l$. Then $h_b = \langle A, e_b, B \rangle$, $h_l = \langle A, e_1, C, e_2, A \rangle$, $\dtmor(C)$ is a leaf, $\dtmor(A)$ is divalent, $r_\dtmor(A) = 1$, and $m(e_b) = 2$. Furthermore, $e_b, A, e_1, e_2, C$ are the only non-dangling elements in the fibers of $\dtmor(e_b)$, $\dtmor(A)$, $\dtmor(e_1)$, and $\dtmor(C)$.
\end{lm}

\begin{proof}
	See~\cite[Lemma~\ref{I-lemma-loop-bridge}]{dv20}.
\end{proof}

\begin{lm} \label{lm:combinatorial-structure-caterpillar-of-loops} 
	Let $\dtmor \colon G \to T$ be in $\FDCL g$, and let $\tG$ be the deletion of the dangling elements of $G$. Then the restriction $\gamma^{CL}_g = \dtmor|_{\tG}$ depends only on $g$. See Figure~\ref{figure:gammaCLg}.
\end{lm}

\begin{proof}
	Fix an isomorphism $L \colon \HCL g \to H(\dtmor)$. By Lemma~\ref{lm:bridge-and-loop}, the image $\dtmor(\Neigh{L(A_i)})$ is a length-2 path with interior vertex $u_i$ and leaf endpoint $v_i$. These paths account for $2g$ distinct edges of $T$ whose fibers are completely determined. Since $\dtmor$ is change-minimal, $\chan u_i = 1$ and $\chan v_i = 2$. By the Riemann--Hurwitz formula (Lemma~\ref{lm:riemann-hurwitz}), these account for the total change $3g$ of $\dtmor$, so every vertex of $T$ other than $u_i$ and $v_i$ has change 0.
	
	Consider the unique path $P$ in $G$ with endpoints $L(B_2)$ and $L(B_{g-1})$. The path $P$ has length at least $g-3$, containing the internal vertices $L(B_i)$ for $i = 3, \dots, g-2$. The image $\dtmor(P)$ is disjoint from all $\dtmor(\Neigh{L(A_i)})$. Since $T$ has $3g-3$ edges, $\dtmor(P)$ contains at most $(3g-3) - 2g = g-3$ edges. Because the interior vertices of $P$ lie above vertices of change 0, they have $r_\dtmor = 0$. By Case~(r0) of Proposition~\ref{prop-local}, consecutive edges along $P$ map to distinct edges of $T$. Since $T$ is a tree, $\dtmor$ is injective on $P$.
	
	We conclude that $\dtmor(P)$ is a path of length $g-3$ in $T$, each path edge $h_i$ contains a single edge of $G$ for $i = 1, \dots, g-1$ (so the edge-length matrix $A_\dtmor$ is diagonal), and each path $\dtmor(\Neigh{L(A_i)})$ attaches to $\dtmor(P)$ to form the morphism $\gamma^{CL}_g \colon G^{CL}_g \to T^{CL}_g$ shown in Figure~\ref{figure:gammaCLg}.
\end{proof}

\begin{figure}[htbp]
	\centering
	\begin{overpic}[scale=1.6]{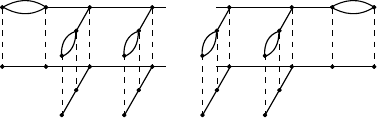} 
	\end{overpic}
	\caption{\label{figure:gammaCLg} The graph morphism $\gamma^{CL}_g \colon G^{CL}_g \to T^{CL}_g$.}
\end{figure} 

Let $\mH = (\HCL g, y)$ be a metric caterpillar of loops. Lemma~\ref{lm:combinatorial-structure-caterpillar-of-loops} implies that constructing a full-dimensional tropical morphism from a modification of $\mH$ to a metric tree reduces to choosing slopes $s_i$ on the path edges $h_i$. By the dangling-no-glue property and Case~(d) of Proposition~\ref{prop-local}, the dangling trees are then uniquely determined by the slope sequence. Moreover, the slopes satisfy a strict step condition:

\begin{prop}[caterpillar slopes and ballot sequences] \label{prop-caterpillar-ballot} \label{lm:one-trop-morph-for-each-ballot}
	Let $\mH = (\HCL g, y)$ be a metric caterpillar of loops.
	\begin{enumerate}[(1)]
		\item In every full-dimensional tropical morphism $\tmor \colon \mG \to \mT$ whose deletion of dangling trees is $\mH$, the slopes $s_i$ on the path edges satisfy $s_1 = s_{g-1} = 2$, $s_i \ge 1$, and $s_i - s_{i-1} = \pm 1$ for all $i = 2, \dots, g-1$.
		\item Conversely, every such slope sequence $(s_i)_{i=1}^{g-1}$ uniquely determines a tropical morphism $\tmor_s \colon \mG \to (T^{CL}_g, z)$ modifying $\mH$ of combinatorial type in $\FDCL g$.
	\end{enumerate}
\end{prop}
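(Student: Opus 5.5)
Both parts rest on the rigid skeleton from Lemma~\ref{lm:combinatorial-structure-caterpillar-of-loops}. Each path edge $h_i$ of $\HCL g$ consists of a single edge $e_i$ of $G$ with slope $s_i = m(e_i)$. It maps to an edge of the path $\dtmor(P)$ extended at its ends, and every lollipop or end loop lies over a length-2 leaf path attached at $u_i$. For part (1) I would read off the index constraints vertex by vertex using the balancing condition and Proposition~\ref{prop-local}.

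\textbf{End values.} At $A_1$ the loop and $h_1$ meet, and Lemma~\ref{lm:bridge-and-loop} applies with $h_1$ in the role of the bridge, giving $s_1 = m(e_b) = 2$. The same argument at $A_g$ gives $s_{g-1} = 2$.

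\textbf{Steps.} At an interior vertex $B_i$ the non-dangling edges are $e_{i-1}$, $e_i$ and the bridge $e_{b,i}$ to $A_i$. Lemma~\ref{lm:bridge-and-loop} gives $m(e_{b,i}) = 2$ and says $\dtmor(B_i) = u_i$ is the point where the leaf path attaches. The change at $u_i$ is carried entirely by $A_i$, with $r_\dtmor(A_i) = 1$, so $r_\dtmor(B_i) = 0$. Case~(r0) of Proposition~\ref{prop-local} then makes $\dtmor$ injective on $\Neighnd{B_i}$, so $e_{i-1}$, $e_i$ and $e_{b,i}$ go to three distinct edges at the trivalent target vertex. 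Call these $t_{i-1}$, $t_i$ and the leaf-path edge $t_b$. Lemma~\ref{lem-rphi-nd} with $r = 0$ gives
\[ 2m(B_i) = 1 + s_{i-1} + s_i + 2, \]
and balancing over $t_b$ gives $m(B_i) = 2 + (\text{dangling contributions over } t_b)$. Balancing over $t_{i-1}$ and $t_i$ separately gives $m(B_i) = s_{i-1} + a = s_i + b$, where $a, b$ count the dangling edges, each of index $1$ by dangling-no-glue. Case~(d) of Proposition~\ref{prop-local} makes dangling edges at $B_i$ inject into distinct target edges, so each of $a, b$ and the dangling count over $t_b$ is at most $1$. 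Hence $m(B_i) \in \{2, 3\}$ and $s_{i-1}, s_i \in \{m(B_i) - 1, m(B_i)\}$. Since $\vale{B_i} = 2m(B_i) - 2$ and $\vale{B_i} \le 3$ dangling plus three non-dangling, $r = 0$ rules out $s_{i-1} = s_i$, so $s_i - s_{i-1} = \pm 1$. Positivity $s_i \ge 1$ is non-degeneracy. The bound $m(B_i) \le 3$ needs more care, because the formula only ties it to the slopes. I would instead work with the exact dangling bookkeeping at $B_i$ to get $|s_i - s_{i-1}| = 1$ directly, not a bound on $m(B_i)$.

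\textbf{Converse and main obstacle.} For part (2), given such a sequence, I would build $\gamma^{CL}_g$ with indices $s_i$ and at each $B_i$ set $m(B_i) = \max(s_{i-1}, s_i) + 1$. I would attach index-1 dangling edges, leading to new leaf edges, over each incident target edge wherever balancing is short. The dangling trees are then forced by Case~(d), and harmonicity, the RH-condition and change-minimality are checked locally. The diagonal edge-length matrix then yields full rank. Target lengths are $z = A^{-1} y$, which are positive because $A$ is diagonal with positive entries. This gives uniqueness and membership in $\FDCL g$. I expect the main obstacle to be verifying that the dangling completions yield an honest tree target with only $3g-3$ edges. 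In particular the leftover edges must lie in leaf-edge fibres and not create new target edges, which would break change-minimality. I would handle this by placing all dangling trees over existing target edges only, as Lemma~\ref{lm:combinatorial-structure-caterpillar-of-loops} forces.
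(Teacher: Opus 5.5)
Your outline is the paper's: Lemma~\ref{lm:bridge-and-loop} gives the bridge index $2$ and $s_1 = s_{g-1} = 2$, and then $r_\varphi(B_i) = 0$ is combined with the non-dangling excess formula and balancing. However, the computation at $B_i$ that part (1) rests on is wrong, and the argument built on it fails. Since $B_i$ has three non-dangling edges and $r_\varphi(B_i) = 0$, Lemma~\ref{lem-rphi-nd} reads $0 = 1 + 2m(B_i) - (s_{i-1} + s_i + 2)$, i.e.\ $s_{i-1} + s_i = 2m(B_i) - 1$, not $2m(B_i) = s_{i-1} + s_i + 3$. Likewise, the RH condition over the trivalent vertex $\varphi(B_i)$ gives $B_i$ valency $m(B_i) + 2$, not $2m(B_i) - 2$. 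With your formula you could at best conclude $|s_i - s_{i-1}| \in \{1,3\}$.

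Your use of Case~(d) is also invalid. Balancing at $B_i$ over the bridge edge $\varphi(e_{b,i})$ forces exactly $m(B_i) - 2$ dangling edges at $B_i$ over that single target edge. Moreover, $m(B_i) \ge \max(s_{i-1}, s_i)$ is not bounded by $3$: for $g = 6$, the sequence $(2,3,4,3,2)$ is one of the $C_3 = 5$ the statement asserts are realized, and it gives $m(B_3) = 4$. The injectivity in (d) concerns each dangling tree separately, not the star of dangling edges at the non-dangling vertex $B_i$. The bound $m(B_i) \le 3$ you derive would cap all slopes at $3$ and contradict the Catalan count for $g \ge 6$.

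The missing step is short once the arithmetic is right. The sum $s_{i-1} + s_i = 2m(B_i) - 1$ is odd, so $s_{i-1} \neq s_i$. Balancing over $\varphi(e_{i-1})$ and $\varphi(e_i)$ gives $m(B_i) \ge \max(s_{i-1}, s_i)$, hence $\min(s_{i-1}, s_i) \ge \max(s_{i-1}, s_i) - 1$. No dangling bookkeeping is needed.

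The same error breaks part (2). Your choice $m(B_i) = \max(s_{i-1}, s_i) + 1$ gives $r_\varphi(B_i) = 2$ over a trivalent vertex. This is excluded by Case~(r2) of Proposition~\ref{prop-local}, and also by the Riemann--Hurwitz total $3g$, which is already used up by the $g$ loops. The correct value is $m(B_i) = \max(s_{i-1}, s_i)$. It is balanced by one index-$1$ dangling edge over the path edge of smaller slope, plus $m(B_i) - 2$ dangling length-$2$ paths over the lollipop path, so no new leaf edges are created.

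Finally, $\varphi(B_i)$ is not the divalent vertex $\varphi(A_i)$ that carries the ramification of $A_i$; that would contract the bridge. The reason $r_\varphi(B_i) = 0$ is that the count in Lemma~\ref{lm:combinatorial-structure-caterpillar-of-loops} leaves change $0$ at the attachment vertex $\varphi(B_i)$.
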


\begin{proof}
	For (1), let $\dtmor$ be the combinatorial type of $\tmor$. By Lemma~\ref{lm:bridge-and-loop}, the index at the bridge incident to $L(A_i)$ is~2. Because $r_\dtmor(L(B_i)) = 0$, the $r_\dtmor$ formula (Lemma~\ref{lm:formula-rphi}) gives
	\[
	0 = 2(m(L(B_i)) - 1) - \bigl((s_{i-1} - 1) + (s_i - 1) + (2 - 1)\bigr) = 2m(L(B_i)) - 1 - (s_{i-1} + s_i),
	\]
	so $s_{i-1} + s_i = 2m(L(B_i)) - 1$. Since $m(L(B_i)) \ge \max(s_{i-1}, s_i)$ by the balancing condition, it follows that $s_i - s_{i-1} = \pm 1$. The boundary conditions $s_1 = s_{g-1} = 2$ and positivity $s_i \ge 1$ follow from Lemma~\ref{lm:combinatorial-structure-caterpillar-of-loops}.
	
	For (2), constructing $\tmor_s$ consists of extending $\gamma^{CL}_g$ to a \DTmor $\dtmor_s$ by specifying the index map $m$ and attaching dangling trees (Lemma~\ref{lm:combinatorial-structure-caterpillar-of-loops}). The index map $m$ is determined on loops and their bridges by Lemma~\ref{lm:bridge-and-loop} and change-minimality; on path edges by $m(h_i) = s_i$; on dangling elements by dangling-no-glue ($m = 1$); and at vertices $B_i$ by $s_{i-1} + s_i = 2m(B_i) - 1$, forcing $m(B_i) = \max(s_i, s_{i-1})$. Balancing at $B_i$ is restored by attaching $m(B_i) - 2$ paths of length~2 mapping to the lollipop at $B_i$, oriented according to $s_i - s_{i-1} = \pm 1$. By Case~(d) of Proposition~\ref{prop-local}, this construction is unique, yielding a full-rank change-minimal \DTmor $\dtmor_s \in \FDCL g$.
\end{proof}

Every valid slope sequence $(s_i)_{i=1}^{g-1}$ defines a sequence $(b_i)_{i=1}^g$ by setting $b_1 = 1$, $b_g = -1$, and $b_i = s_i - s_{i-1}$ for $i = 2, \dots, g-1$. Each $b_i \in \{1, -1\}$, all partial sums $\sum_{j=1}^i b_j = s_i - 1$ are non-negative, and $\sum_{j=1}^g b_j = 0$ since $s_1 = s_{g-1} = 2$. These are precisely \emph{ballot sequences} of length $g$.

\begin{lm}[{\cite[Theorem~1.5.1]{sta15}}] \label{lm:catalan-many}
	The number of length-$g$ ballot sequences is the Catalan number $C_{g/2} = \frac{1}{g/2 + 1} \binom{g}{g/2}$. \qed
\end{lm}

\begin{prop} \label{prop-divisors-on-chain} 
	Let $g$ be even and let $\mH = (\HCL g, y)$ be a caterpillar of loops. If the edge lengths in $y$ are pairwise distinct, then the fiber $\inv \Pi(\mH)$ contains exactly $C_{g/2}$ points in $\TM d g$, and these points are pairwise connected by paths passing through codimension-1 cells in $\TM d g$.
\end{prop}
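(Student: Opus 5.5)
Two things must be established: that the fiber has exactly $C_{g/2}$ points, and that these points are joined through codimension-1 cells. Throughout, let $d = g/2+1$.

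\textbf{Counting the fiber.} First I decompose the fiber using \eqref{eq-S}. Since the edge lengths of $y$ are pairwise distinct, they are in particular nonzero, so $\mH$ lies in the open cone of $\HCL g$. Any full-rank $\dtmor$ realizing $\mH$ therefore has $H(\dtmor) \cong \HCL g$.

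Such a $\dtmor$ cannot be a proper limit of an element of $\FD g$. A proper limit contracts some target edge, so its cone has dimension less than $3g-3$. But its image must contain the generic point $y$ of the $(3g-3)$-dimensional cone $C_{\HCL g}$. Strictly speaking, only the fibers over generic points are obviously immune to lower-dimensional cones, so here I use the distinct-lengths hypothesis. Concretely, a proper limit has a codimension-1 edge-length map, which imposes a linear relation among the $y(h)$. From the structure in Lemma~\ref{lm:combinatorial-structure-caterpillar-of-loops}, these relations are equalities between edge lengths, or between an edge length and a fixed multiple of another. I expect pairwise distinctness to exclude exactly these, and I will check this against the list of possible relations. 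Consequently the sum in \eqref{eq-S} runs over $\FDCL g$.

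Next I apply Lemma~\ref{lemma-count-realizations}. Each $\dtmor \in \FDCL g$ has a diagonal edge-length matrix $A_\dtmor$ with positive entries, so $C_\dtmor$ is the whole positive orthant of $\RR^{E(\HCL g)}$. Hence $(\orb{\Aut H} y) \cap C_\dtmor$ is the full $\Aut \HCL g$-orbit of $y$, of size $|\Aut \HCL g|$ because $y$ is general. The numerator is therefore $|\Aut \HCL g|$.

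For the denominator, $\Aut \HCL g$ is generated by loop reversals and the end-to-end flip $A_i \leftrightarrow A_{g+1-i}$. I claim every loop reversal lifts to an automorphism of $\dtmor$: swap $e_1, e_2$ over the same target edge, as in Lemma~\ref{lm:bridge-and-loop}. The flip, by contrast, reverses the slope sequence. So if I do not quotient by the flip, the realizations of $\mH$ are indexed exactly by slope sequences placed on the labelled edges of $\mH$. By Proposition~\ref{prop-caterpillar-ballot} these correspond bijectively to valid slope sequences, and by Lemma~\ref{lm:catalan-many} there are $C_{g/2}$ of them.

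The step I expect to be the main obstacle is reconciling this with the automorphism bookkeeping. Reversal-symmetric and non-symmetric sequences must both contribute exactly one point each. The clean way is to argue directly, via Lemma~\ref{lemma-congruence-iff-autphi-orbit}: two tropical morphisms over the same labelled $\mH$ are isomorphic iff their slope sequences agree. An isomorphism induces an isometry of $\mH$. With distinct lengths that isometry is trivial on edges, except that it may reverse loops, and loop reversal preserves slopes.

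\textbf{Connectivity.} For two ballot sequences differing by a local move (a peak $\pm$ swapped with a valley $\mp$ at positions $i, i+1$), I shrink the target edge $\dtmor(h_i)$, i.e.\ $y(h_i) \to 0$. This produces a common codimension-1 limit in which $B_i$ and $B_{i+1}$ merge into a 4-valent vertex. I verify that both slope sequences give isomorphic limits, using Lemma~\ref{lm:limit-matrix-change} and the fact that the remaining slopes agree. Then I regrow the edge. This gives a path through a codimension-1 cell, and these moves connect all Dyck paths, a standard fact. Finally, I adjust lengths within the orthant to return to $y$ itself.
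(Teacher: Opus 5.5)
Your argument is essentially the paper's. Points of the fibre correspond to slope sequences via Proposition~\ref{prop-caterpillar-ballot}. An isomorphism between two of them induces an isometry of $\mH$ which, because the lengths are pairwise distinct, sends each path edge $h_i$ to itself and so preserves slopes; Lemma~\ref{lm:catalan-many} then gives the count. Connectivity uses adjacent swaps in the ballot sequence, realized by shrinking $h_i$ to a $4$-valent vertex and regrowing, exactly as in the paper.

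Your remark that loop reversals survive the distinctness hypothesis is more accurate than the paper's claim that $\Aut(\mH)$ is trivial. It is harmless, because loop reversals fix every $h_i$. For the same reason, the numerator in your Lemma~\ref{lemma-count-realizations} computation is not $|\Aut \HCL g|$: loop reversals fix $y$, so the orbit has size $2$. Switching to the direct isomorphism argument was therefore the right call.

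The one step where your plan goes astray is the preliminary exclusion of proper limits from the fibre, a step the paper passes over silently. You propose that a proper limit imposes a relation such as $y(h)=c\,y(h')$, which pairwise distinctness would rule out. That does not work as stated, since distinctness does not exclude $c\neq 1$. More to the point, no list of relations needs checking, because no such limits exist:
\begin{itemize}
\item $\HCL g$ already has the maximal number $3g-3$ of edges. So a point of some $\overline{C}_\dtmor$ lying over $\mH$ has no edge of $H(\dtmor)$ of length zero, and $H(\dtmor)\cong \HCL g$, i.e.\ $\dtmor\in\FDCL g$.
\item By Lemma~\ref{lm:combinatorial-structure-caterpillar-of-loops}, every target edge then lies under exactly one edge of $\HCL g$, and $A_\dtmor$ is diagonal with positive entries.
\item Hence $y>0$ forces all target lengths to be positive, and the point lies in the open cone $C_\dtmor$.
\end{itemize}
Distinctness is needed only to prevent the end-to-end flip from being an isometry.

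A smaller point: Lemma~\ref{lm:limit-matrix-change} cannot be used to \emph{verify} that the two limits at $h_i$ are isomorphic, since it presupposes compatible labellings, i.e.\ an isomorphism of the limits. That is a direct local check at the merged vertex. In both cases it has multiplicity $s_{i-1}+1$, with non-dangling indices $(2,2,s_{i-1},s_{i-1})$.
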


\begin{proof}
	For $q=1,2$, let $\Aq \tmor q \colon \Aq \mG q \to \Aq \mT q$ be a tropical morphism whose deletion of dangling trees is $\Aq \mH q$, with an isometry $\Aq L q \colon \mH \to \Aq \mH q$. Let $\Aq b q$ be the ballot sequence associated with $\Aq \tmor q$ by Proposition~\ref{prop-caterpillar-ballot}.
	
	If $\Aq b 1 = \Aq b 2$, then $\Aq \tmor 1$ and $\Aq \tmor 2$ are isomorphic by construction. Conversely, suppose $\Aq \tmor 1 \cong \Aq \tmor 2$, so there exist isometries $\Psi, \Upsilon$ with $\Aq \tmor 2 \circ \Psi = \Upsilon \circ \Aq \tmor 1$. The isometry $\Psi$ restricts to an isometry $\widetilde \Psi \colon \Aq \mH 1 \to \Aq \mH 2$, and $\inv {(\Aq L 2)} \circ \widetilde \Psi \circ \Aq L 1$ is an automorphism of $\mH$. Because the edge lengths in $y$ are pairwise distinct, $\Aut(\mH)$ is trivial, so $\Aq L 2 = \widetilde \Psi \circ \Aq L 1$. Writing $\ell(\cdot)$ for segment lengths, we find
	\[
	\Aq {s_i} 2 = \frac{\ell(\Aq L 2(h_i))}{\ell(\Aq \tmor 2 \circ \Aq L 2(h_i))} 
	= \frac{\ell(\widetilde \Psi \circ \Aq L 1(h_i))}{\ell(\Aq \tmor 2 \circ \widetilde \Psi \circ \Aq L 1(h_i))} 
	= \frac{\ell(\Aq L 1(h_i))}{\ell(\Upsilon \circ \Aq \tmor 1 \circ \Aq L 1(h_i))}
	= \Aq {s_i} 1.
	\]
	Hence $\Aq b 1 = \Aq b 2$. Lemma~\ref{lm:catalan-many} then shows that $\inv \Pi(\mH)$ consists of exactly $C_{g/2}$ points.
	
	To prove connectivity, suppose $\Aq b 2$ is obtained from $\Aq b 1$ by swapping adjacent entries $\Aq b 1_i = 1$ and $\Aq b 1_{i+1} = -1$ to $\Aq b 2_i = -1$ and $\Aq b 2_{i+1} = 1$, leaving all other entries unchanged. Then the slope on $h_i$ shifts from $s_{i-1}+1$ to $s_{i-1}-1$, while all other edges and indices agree. Contracting $h_i$ to length~0 in both cones yields isomorphic limit morphisms in their common boundary face. Since any ballot sequence can be transformed into any other by a finite sequence of such adjacent swaps, the completed cones $\overline{C}_\dtmor$ are connected through codimension-1 cells in $\TM d g$.
\end{proof}

\begin{re} \label{re:dual-graph-of-EH87}
	In~\cite{eh87}, Eisenbud and Harris established the Brill--Noether theorem by analysing a 1-parameter family of curves specialising to a nodal curve $C_\infty$ of genus $g$. The dual graph $G_\infty$ of $C_\infty$ is shown in Figure~\ref{fig:dual-graph-eh}:
	
	\begin{figure}[htbp]
		\centering
		\begin{overpic}[scale=1.4]{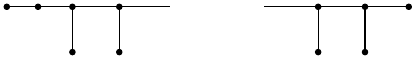}  	
			\put (-3.5,11.3) {\scalebox{1}{$E_1$}}
			\put (8.5,13.6) {\scalebox{1}{$Y_1$}}
			\put (16.5,13.6) {\scalebox{1}{$Y_2$}}
			\put (18.5,0) {\scalebox{1}{$E_2$}}
			\put (27.5,13.6) {\scalebox{1}{$Y_3$}}
			\put (50,12) {\scalebox{1.3}{$\dots$}}
			\put (29.5,0) {\scalebox{1}{$E_3$}}
			\put (75.5,13.8) {\scalebox{1}{$Y_{g-1}$}}
			\put (77.5,0) {\scalebox{1}{$E_{g-1}$}}
			\put (87,13.8) {\scalebox{1}{$Y_g$}}
			\put (88.5,0) {\scalebox{1}{$E_g$}}
			\put (100,11.3) {\scalebox{1}{$Y_{g+1}$}}
		\end{overpic}
		\[G_\infty\]
		\caption{The dual graph $G_\infty$ of the Eisenbud--Harris degenerate curve $C_\infty$.}
		\label{fig:dual-graph-eh}
	\end{figure}
	
	Here the components $Y_q$ are rational curves, so that $g(Y_q) = 0$, and the components $E_q$ are elliptic curves, with $g(E_q) = 1$. Replacing each elliptic component $E_q$ by a vertex with a self-loop yields a metric graph tropically equivalent to the caterpillar of loops $\HCL g$. Thus, the combinatorial construction underlying our Catalan count aligns closely with the limit linear series degenerations of Eisenbud and Harris~\cite{eh87}.
\end{re}


\section{Invariance of the count via continuous deformation} \label{sec-deformation-invariance}

We prove that the fibers of $\Pi \colon \TM d g \to \MTrop g$, counted with multiplicity, are invariant under continuous deformations. Deforming a generic metric graph to a caterpillar of loops extends the Catalan count $C_{g/2}$ across~$\MTrop g$.

\subsection{General paths} \label{subsec-general-paths}
We study the fiber count $|\inv \Pi(\mG)|$ along paths $\omega \colon [0,1] \to \MTrop g$, equipped with the quotient topology from its constituent cones. For $k \ge 0$, let $\mathbb L_k \FD g$ denote the \DTmors obtained by contracting $k$ edges from a morphism in $\FD g$ (which inherit full-rankness, dangling-no-glue, no-return, and pass-once by Lemma~\ref{lm:properties-inherit}), and write $\Pi(\mathbb L_k \FD g) = \bigcup_{\dtmor_0 \in \mathbb L_k \FD g} \Pi(\overline C_{\dtmor_0})$. Away from the codimension-1 walls $\Pi(\mathbb L_1 \FD g)$ and points with non-general edge lengths, the fiber count is locally constant:

\begin{prop}[general paths] \label{proposition-generic-path}
	Let $\omega \colon [0,1] \to \MTrop g \setminus \Pi(\mathbb L_1 \FD g)$ be a path. If $\omega(0)$ and $\omega(1)$ have general edge lengths, then
	\[
	|\inv \Pi(\omega(0))| = |\inv \Pi(\omega(1))|.
	\]
\end{prop}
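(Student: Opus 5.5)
The plan is to show that, off the walls, $\Pi$ behaves like a finite covering map whose sheets are the open top-dimensional cones $C_\dtmor$ with $\dtmor \in \FD g$. Path lifting then makes the fiber count locally constant along $\omega$, and connectedness of $[0,1]$ gives equality at the endpoints.

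\emph{Step 1: reduce to open top cones.} For $\dtmor \in \FD g$ we have $|E(T)| = 2g+2d-5 = 3g-3$, and full rank makes $A_\dtmor$ an invertible square matrix. Hence $H(\dtmor)$ has $3g-3$ edges and is trivalent. The completed cone $\overline{C}_\dtmor$ is closed in $\RRgo^{E(H(\dtmor))}$, and its relative boundary consists of the faces $\overline{C}_{\dtmor_0}$ with $\dtmor_0 \in \mathbb L_1 \FD g$. Every $k$-fold contraction for $k \ge 2$ is a face of such a one-edge limit, so its image lies in $\Pi(\mathbb L_1 \FD g)$. Since $\omega$ avoids $\Pi(\mathbb L_1 \FD g)$, every point of $\inv\Pi(\omega(t))$ lies in some open cone $C_\dtmor/\cong$ with $\dtmor \in \FD g$. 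The decomposition \eqref{eq-S} therefore only involves these finitely many $\dtmor$.

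\emph{Step 2: each cone contributes over a clopen set of times.} Fix $\dtmor \in \FD g$ and put $I_\dtmor = \{t : \omega(t) \in \Pi(\overline{C}_\dtmor)\}$.
\begin{itemize}
\item $I_\dtmor$ is closed, since $\Pi(\overline{C}_\dtmor)$ is the image of a closed cone in the quotient $\MTrop g$ and is therefore closed.
\item Along $\omega$, we have $\Pi(\overline{C}_\dtmor)\cap \omega([0,1]) = \Pi(C_\dtmor)\cap\omega([0,1])$ by Step 1.
\item $\Pi(C_\dtmor)$ is open in $\MTrop g$: $A_\dtmor$ maps the open orthant $\RRo^{E(T)}$ homeomorphically onto an open subset of $C_{H(\dtmor)}$, which is open in the quotient because $H(\dtmor)$ is trivalent.
\end{itemize}
So $I_\dtmor$ is clopen, hence either empty or all of $[0,1]$. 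In particular $\omega$ stays inside the open trivalent stratum while it meets any such image. Conversely, if $\omega(t)$ lies in no $\Pi(C_\dtmor)$ for any $t$, both endpoint fibers are empty and there is nothing to prove.

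\emph{Step 3: lift and count sheets.} On an interval where $\omega$ stays in the open cell $C_H/\Aut H$, lift $\omega$ to a path $\tilde\omega$ in $C_H$. Then $\Aut H$ gives $|\Aut H|$ lifts $\gamma\cdot\tilde\omega$. Since $C_\dtmor$ is open in $C_H$ and its boundary is never met, the number $N_\dtmor(t) = |(\orb{\Aut H}{\tilde\omega(t)})\cap C_\dtmor|$, counted as lifts $\gamma\cdot\tilde\omega$ with multiplicity, is constant in $t$. This is because each lift lies in $C_\dtmor$ either at all times or at none. At endpoints with general edge lengths, Lemma~\ref{lemma-count-realizations} gives
\[
|\inv\Pi(\omega(t)) \cap (C_\dtmor/\cong)| = \frac{N_\dtmor(t)}{|\Aut_\dtmor H(\dtmor)|},
\]
where $N_\dtmor(t)$ counts honest orbit points because the stabilizer is trivial. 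Summing over $\dtmor$ in \eqref{eq-S} gives the claimed equality.

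\emph{Main obstacle.} The difficulty is in Step 3, at interior times where $\omega(t)$ has non-general edge lengths (nontrivial stabilizer), or where $\omega$ moves between trivalent cells through lower strata of $\MTrop g$ that are not walls. For the first issue, I would count lifts of the path, that is elements $\gamma \in \Aut H$, rather than orbit points, so that stabilizers never enter the count. For the second, I would argue that such a crossing happens only outside $\bigcup_\dtmor \Pi(C_\dtmor)$ and so does not affect the count. The relevant interval decomposition should then follow from the clopen argument of Step 2, applied componentwise.
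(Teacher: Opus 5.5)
Your proof is correct and follows the same route as the paper. The path stays in a single open trivalent cell and lifts to $C_H$. It never crosses the boundary of any cone $C_\dtmor$ with $\dtmor \in \FD g$. The endpoint counts are then compared via Lemma~\ref{lemma-count-realizations}, using trivial stabilizers at the general endpoints.

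Your Step~2 clopen argument and your count of elements $\gamma \in \Aut H$ with $\gamma \cdot \tilde\omega(t) \in C_\dtmor$ make rigorous two points the paper only asserts. First, $\omega$ cannot leave the trivalent cell when the endpoint fibers are non-empty; the paper cannot take this for granted, since surjectivity of $\Pi$ is not yet known. Second, non-general interior points do not affect the count, where the paper speaks of ``unique'' extensions.
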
 

\begin{proof}
	Because $\omega$ avoids $\Pi(\mathbb L_1 \FD g)$, all graphs along it share a trivalent type $H$, so $\omega$ lifts to $C_H$. Since this path avoids all limits, where branching can occur, each morphism over $\omega(0)$ extends to a unique continuous path along $\omega$. General edge lengths at the endpoints ensure trivial $\Aut H$-stabilizers, so the fiber count is constant by Lemma~\ref{lemma-count-realizations}.
\end{proof}

To preserve the count across the codimension-1 walls $\Pi(\mathbb L_1 \FD g)$, morphisms must be counted with appropriate multiplicities.

\subsection{Multiplicity of morphisms} \label{subsec-multiplicity}
We define a determinantal multiplicity for full-dimensional morphisms satisfying a balancing condition across codimension-1 cells of $\TM d g$.

\begin{de}[multiplicities] \label{def-multiplicity}
	For $\dtmor \in \FD g$, let $A_\dtmor$ be its edge-length matrix, $l(T)$ the number of leaves of $T$, and $d_i$ the least common denominator of the $i$-th row of $A_\dtmor$. Setting $D_\dtmor = \prod_i d_i$, the \emph{signed multiplicity} of $\dtmor$ is
	\[
	\Mult \dtmor = \frac{D_\dtmor}{2^{l(T)}} \det A_\dtmor,
	\]
	and its \emph{multiplicity} is $\absMult \dtmor = |\Mult \dtmor|$, which is invariant under isomorphisms. The multiplicities of a point $\tmor \in \TM d g$ are defined via its combinatorial model.
\end{de}

\begin{lm}[toric multiplicity] \label{lemma-toric-multiplicity}
	For each $\dtmor \in \FD g$, the multiplicity $\absMult \dtmor$ equals the toric multiplicity of the polyhedral cone of the transpose matrix $A_\dtmor^T$. \qed
\end{lm}

\begin{re} \label{rem-sign-convention}
	The sign of $\Mult \dtmor$ depends on ordering the edges $E(H)$ and $E(T)$. Because $\TM d g$ is connected through codimension~1 (Theorem~\ref{thm}), choosing an orientation on a single cone determines signs globally across adjacent codimension-1 faces. By Lemma~\ref{lemma-toric-multiplicity}, the absolute multiplicity $\absMult \dtmor$ is independent of these conventions and admits an intrinsic interpretation as a toric lattice multiplicity.
\end{re}

We determine the denominators $d_i$ for full-dimensional morphisms:

\begin{lm}[edge denominators] \label{lemma-edge-deno}
	Let $\dtmor \in \FD g$, let $A_\dtmor$ be its edge-length matrix, and let $h_i \in E(H)$. Exactly one of the following holds:
	\begin{enumerate}[(a)]
		\item $h_i$ passes above a leaf and $d_i = 1$.
		\item $h_i$ avoids leaves, all edges in $h_i$ have constant index $k$, and $d_i = k$.
		\item $h_i$ avoids leaves, the edge indices in $h_i$ take values $k$ and $k+1$, and $d_i = k(k+1)$.
	\end{enumerate}   
\end{lm}

\begin{proof}
	By Cases~(r0) and (r1) of Proposition~\ref{prop-local}, indices on adjacent edges in $h_i$ differ by at most~1.
	
	Assume first that $h_i$ passes above a leaf $v \in V(T)$ via an edge $\tilde e \in h_i$, and suppose there exists an edge in $h_i$ with index $\ne 1$. By balancing at a leaf, $m(\tilde e) = 1$. Traversing $h_i$ from that edge to $\tilde e$, adjacent edge indices change by at most~1 at each step, so there exist adjacent edges $e, e' \in h_i$ with $m(e) = 2$ and $m(e') = 1$. Let $A \in V(G)$ be their common endpoint; because $\dtmor$ is full-dimensional, $r_\dtmor(A) = 1$. Perturbing the target lengths by $+z$ on $\dtmor(e)$, $-z$ on $\dtmor(e')$, and $+z/4$ on the leaf edge $t = \dtmor(\tilde e)$ alters no edge lengths in $H$ by Case~(r0) of Proposition~\ref{prop-local}. This produces a non-zero vector in $\ker A_\dtmor$, contradicting that $A_\dtmor$ has full rank. Hence every edge in $h_i$ has index~1, establishing item~(a).
	
	Now assume $h_i$ avoids leaves. By Proposition~\ref{proposition-at-most-two-weights}, edge indices along $h_i$ take at most two distinct values, differing by at most~1. If all edges have constant index $k$, every entry in row $i$ of $A_\dtmor$ is an integer multiple of $1/k$, so $d_i = k$, proving item~(b). If two distinct indices $k$ and $k+1$ occur, the least common multiple of denominators is $k(k+1)$, proving item~(c).
\end{proof}

\begin{re}[multisets of specializing morphisms] \label{re:multiset-specializing}
	When tracking tropical morphisms along a continuous path of metric graphs crossing a codimension-1 wall $C_{\dtmor_0}$, multiple geometric chambers in $\TM d g$ can specialize to the limit $\dtmor_0$. Crucially, two or more of these chambers may share the identical combinatorial type $\dtmor \in \FD g$, differing only by how lengths are assigned across symmetric features of the graph. Even though they represent the same combinatorial data, when following the metric path there is no collapse due to symmetries: the branches remain distinct in $\TM d g$. Each such chamber contributes an incident facet meeting at $C_{\dtmor_0}$, which forces $\cstar{\dtmor_0}$ to be defined as a multiset rather than a simple set.
\end{re}

\begin{ex}[wall crossing without collapse of symmetries] \label{ex-multiset-star}
	We illustrate this phenomenon using the deformation of a loop of three loops from~\cite[Section~4]{dv20} across the wall where the length $y_3$ of the central cycle satisfies $y_3 = y_1 + y_2$. Figure~\ref{fig:wall-crossing-multiset} displays the three regimes:
	
	\begin{figure}[htbp]
		\centering
		\begin{minipage}[b]{0.23\textwidth}
			\centering
			\begin{overpic}[scale=0.75]{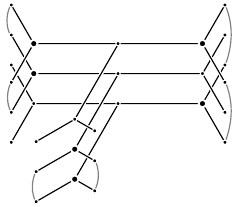}
				\put (15,58) {\scalebox{0.7}{$A$}}
				\put (25,24) {\scalebox{0.6}{$B$}}
				\put (25,11) {\scalebox{0.6}{$C$}}
				\put (80,45) {\scalebox{0.6}{$D$}}
				\put (80,71) {\scalebox{0.6}{$E$}}
				\put (15,71) {\scalebox{0.6}{$F$}}
			\end{overpic}
			\vspace{0.4em}
			
			\small{$\Aq \dtmor 1$ ($y_3 < y_1 + y_2$)}
		\end{minipage}\hfill
		\begin{minipage}[b]{0.23\textwidth}
			\centering
			\begin{overpic}[scale=0.75]{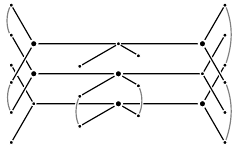}
				\put (15,33) {\scalebox{0.7}{$A$}}
				\put (47,33.5) {\scalebox{0.6}{$B$}}
				\put (47,20) {\scalebox{0.6}{$C$}}
				\put (80,20) {\scalebox{0.6}{$D$}}
				\put (80,46) {\scalebox{0.6}{$E$}}
				\put (15,46) {\scalebox{0.7}{$F$}}
			\end{overpic}
			\vspace{0.4em}
			
			\small{$\dtmor_0$ ($y_3 = y_1 + y_2$)}
		\end{minipage}\hfill
		\begin{minipage}[b]{0.23\textwidth}
			\centering
			\begin{overpic}[scale=0.75]{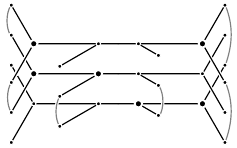}
				\put (15,33) {\scalebox{0.7}{$A$}}
				\put (40,33) {\scalebox{0.6}{$B$}}
				\put (55,20) {\scalebox{0.6}{$C$}}
				\put (80,20) {\scalebox{0.6}{$D$}}
				\put (80,46) {\scalebox{0.6}{$E$}}
				\put (15,46) {\scalebox{0.7}{$F$}}
			\end{overpic}
			\vspace{0.4em}
			
			\small{$\Aq \dtmor {3a}$ ($y_4 < y_5$)}
		\end{minipage}\hfill
		\begin{minipage}[b]{0.23\textwidth}
			\centering
			\begin{overpic}[scale=0.75]{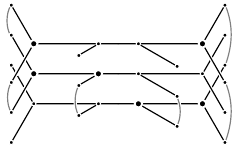}
				\put (15,33) {\scalebox{0.7}{$A$}}
				\put (40,33) {\scalebox{0.6}{$B$}}
				\put (55,20) {\scalebox{0.6}{$C$}}
				\put (80,20) {\scalebox{0.6}{$D$}}
				\put (80,46) {\scalebox{0.6}{$E$}}
				\put (15,46) {\scalebox{0.7}{$F$}}
			\end{overpic}
			\vspace{0.4em}
			
			\small{$\Aq \dtmor {3b}$ ($y_4 > y_5$)}
		\end{minipage}
		\caption{Deformation across a codimension-1 wall. On the left, $\Aq \dtmor 1$ realizes metric graphs with $y_3 < y_1 + y_2$. In the middle, contracting the target edge yields the codimension-1 limit $\dtmor_0$. On the right, regrowing into the region $y_3 > y_1 + y_2$ produces two morphisms $\Aq \dtmor {3a}$ and $\Aq \dtmor {3b}$ that share the same combinatorial type $\dtmor_3$, but differ by which edge between $B$ and $C$ is longer.}
		\label{fig:wall-crossing-multiset}
	\end{figure}
	
	Before the wall, $\mH$ is realized by the single morphism $\Aq \dtmor 1$. At the wall, contracting the target edge $t_1$ yields the codimension-1 limit $\dtmor_0$. Beyond the wall, regrowing $t_1$ gives two realizations $\Aq \tmor {3a} = (\dtmor_3, \Aq z a)$ and $\Aq \tmor {3b} = (\dtmor_3, \Aq z b)$. Combinatorially, $\Aq \dtmor {3a}$ and $\Aq \dtmor {3b}$ are isomorphic, defining the same combinatorial type $\dtmor_3 \in \FD g$. However, on any metric graph with general edge lengths ($y_4 \ne y_5$), the two morphisms are distinct points in $\inv \Pi(\mH)$, distinguished by which of the two edges between $B$ and $C$ contains the preimage of the target branching vertex $\dtmor(B)$. Symmetries do not collapse these two branches along the metric path. To balance the single cone of $\Aq \dtmor 1$ on the other side of the wall, the combinatorial type $\dtmor_3$ must appear with multiplicity~2 in $\cstar{\dtmor_0}$.
\end{ex}

The edge denominators transform the local linear relations among edge-length matrices into a balancing condition for multiplicities:

\begin{prop}[balancing condition] \label{prop-signed-mult}
	Let $\dtmor_0 \in \mathbb L_1 \FD g$ be a codimension-1 limit. Let $\cstar{\dtmor_0}$ denote the multiset of full-dimensional morphisms in $\FD g$ that specialize to $\dtmor_0$.
	\begin{enumerate}[(1)]
		\item If $H(\dtmor_0)$ is trivalent, then
		\[
		\sum_{\dtmor \in \cstar{\dtmor_0}} \Mult \dtmor = 0.
		\]
		\item If $H(\dtmor_0)$ is non-trivalent, then $\Mult \dtmor = \Mult{\dtmor'}$ for every pair $\dtmor, \dtmor' \in \cstar{\dtmor_0}$.
	\end{enumerate} 
\end{prop}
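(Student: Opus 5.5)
The plan is to reduce both statements to a local computation in the single column that changes across the wall. Fix $\dtmor_0 \in \mathbb L_1 \FD g$, obtained by contracting an edge $t_k$. Every $\dtmor \in \cstar{\dtmor_0}$ (with multiplicity, as in Remark~\ref{re:multiset-specializing}) is a regrowth of $t_k$ at the vertex $w_0$, and we equip it with labellings compatible at $t_k$. By Lemma~\ref{lm:limit-matrix-change}, the edge-length matrices of all members of $\cstar{\dtmor_0}$ agree outside column $k$; call the common columns $\bbv_j$ for $j\neq k$. Hence $\det A_\dtmor$ is linear in the varying column $\bbv_k^{(\dtmor)}$, and the whole proposition becomes a statement about these columns weighted by the normalising factor $D_\dtmor/2^{l(T)}$.

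For (1), when $H(\dtmor_0)$ is trivalent, no cycle collapses and $H(\dtmor)=H(\dtmor_0)$ for every $\dtmor$ in the star, so the row set is fixed. I would first show that $D_\dtmor/2^{l(T)}$ is the same for all members of $\cstar{\dtmor_0}$, or at least that the products $\frac{D_\dtmor}{2^{l(T)}}\bbv_k^{(\dtmor)}$ make sense row by row. This uses Lemma~\ref{lemma-edge-deno} and Proposition~\ref{proposition-at-most-two-weights}: the denominators $d_i$ are read off from the index profile along $h_i$, and that profile is determined by the limit $\dtmor_0$ up to the one edge over $t_k$. The number of leaves is also unchanged, unless $t_k$ is a leaf edge, which change-minimality excludes for a codimension-1 limit of this kind. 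The core claim is then a linear relation
\[
\sum_{\dtmor \in \cstar{\dtmor_0}} \pm\, \frac{D_\dtmor}{2^{l(T)}}\, \bbv_k^{(\dtmor)} \in \operatorname{span}\{\bbv_j : j\neq k\},
\]
with signs fixed by the orientation convention of Remark~\ref{rem-sign-convention}. Multilinearity of the determinant then gives $\sum \Mult\dtmor = 0$. I would establish this relation by going through the trivalent deformation cases summarised in Appendix~\ref{appendix-trivalent-deformation} and Part~I. In each case, $w_0$ has $\ch{w_0}+\vale{w_0}=4$, and the two or three regrowths redistribute the preimages above $w_0$. The entries of $\bbv_k$ are sums of $1/m(e)$ over the new edges, so the relation comes down to identities such as $\frac{1}{k-1}\cdot\alpha+\frac{1}{k+1}\cdot\beta = \frac{1}{k}(\dots)$ after clearing denominators. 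This is exactly where the factors $d_i$ are needed.

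For (2), when $H(\dtmor_0)$ is non-trivalent, contracting $t_k$ contracts an edge $h$ of $H$. The members of the star then have different source types $H(\dtmor)$, and each has exactly one row, for the edge $h_\dtmor$, that is new relative to $\dtmor_0$. I expect that column $k$ is supported only on that row, up to contributions proportional to other columns. If so, expanding $\det A_\dtmor$ along column $k$ gives $\pm a_{h_\dtmor k}$ times a common minor built from the columns $\bbv_j$, $j\ne k$, restricted to the rows of $H(\dtmor_0)$. The product of the factor $d_{h_\dtmor}$ and the entry $a_{h_\dtmor k}$ should equal $1$ or a common constant in each case, because by Lemma~\ref{lemma-edge-deno} the new edge consists of edges of a single index profile. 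All other $d_i$ agree across the star, which gives $\Mult\dtmor=\Mult{\dtmor'}$.

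The main obstacle is the case analysis at $w_0$. For (1), it is verifying the linear relation with the correct multiset counts, as in Example~\ref{ex-multiset-star}, where a type appears twice. For (2), it is checking the claim that column $k$ is supported only on the new row across all eight non-trivalent boundary scenarios of Section~\ref{sec-constructions}. I would first try to prove both claims uniformly: show that, modulo the span of the other columns, $\bbv_k$ depends only on the local fibre above $t_k$, as pass-once and no-return suggest. I would fall back on explicit casework only where this uniform argument fails.
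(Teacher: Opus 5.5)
Your strategy is the paper's. For (1), you use multilinearity of the determinant in the single regrown column, combined with the column relations of Part~I, whose coefficients must match the normalisations $D_\dtmor/2^{l(T)}$. For (2), you use a cofactor expansion isolating the one new source edge, with $A_{\dtmor_0}$ as the common complementary minor (Lemma~\ref{lm:change-comb-type}). But one claim you build on is false: $l(T)$ is \emph{not} constant on $\cstar{\dtmor_0}$. Change-minimality does not prevent $t_k$ from being a leaf edge, since a leaf $u$ with $\ch u = 2$ is change-minimal.

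In the representative case \{w2-r2-nd3-M-1k\}, the regrowth $\dtmor^{(1)}$ has $t_1$ ending at a new leaf, so $l(T^{(1)}) = l(T_0)+1$ while $l(T^{(2)}) = l(T^{(3)}) = l(T_0)$. The row-denominator ratios $\prod_{j\le 3} d^{(q)}_j/\prod_{j\le 3} d_{0,j}$ are $1$, $k-1$, $k+1$. The resulting weights $\tfrac12$, $k-1$, $k+1$ are exactly the coefficients of the vanishing combination $\tfrac12 c^{(1)} + (k-1)c^{(2)} + (k+1)c^{(3)}$. So your preliminary step of showing $D_\dtmor/2^{l(T)}$ is constant on the star fails. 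The weighted relation you then state is the right one, but it must be checked with $l(T)$ varying. Also, with compatible labellings the coefficients of the regrown columns in the Part~I relations are all positive, so no orientation signs enter. Inserting $\pm$ would only yield $\sum \pm \Mult{\dtmor} = 0$, which is not the statement.

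The same oversight breaks your step in (2) that $d_{h_\dtmor} a_{h_\dtmor k}$ is a common constant. Above a valency-2 vertex $w_0$, Lemma~\ref{lemma-above-w0} allows regrowths with $\vale u = 1$, $\vale v = 3$ alongside ones with $\vale u = \vale v = 2$. In the first kind, the new edge $h^{(q)}_1$ runs over $t_1$ to the leaf and back with index $1$, so $a_{h_1 t_1} = 2$ and $d_1 = 1$. In the second kind, $a_{h_1t_1} = 1/k^{(q)}_1$ and $d_1 = k^{(q)}_1$. The products are $2$ and $1$. The multiplicities agree only because the first kind also has one extra leaf; this is what the convention $k^{(q)}_1 = \tfrac12$ in Lemma~\ref{lm:change-comb-type} encodes.

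Finally, the support property you expect to need casework for is immediate if you expand along the \emph{row} of $h^{(q)}_1$ rather than the column of $t_1$. Since $h^{(q)}_1$ is precisely the edge collapsed when $t_1$ is contracted, all of its edges lie over $t_1$. They form a single class by pass-once, or two index-$1$ edges in the leaf case. So its row has one non-zero entry. The complementary minor is $A_{\dtmor_0}$ for every member, because contracting $t_1$ leaves the entries in the columns $t \ne t_1$ unchanged. This gives $\det A_{\dtmor^{(q)}} = a_{h_1 t_1} \det A_{\dtmor_0}$, up to a sign common to the star, with no casework. In the paper, the casework of Section~\ref{sec-constructions} serves to show that the regrowths distribute equally over Types~I--III. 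That is needed for wall crossing, not for the equality of multiplicities.
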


\begin{proof}
	For the first statement, we combine the determinant relations across adjacent cones with the edge denominators from Lemma~\ref{lemma-edge-deno}. We illustrate this on the representative trivalent case \{w2-r2-nd3-M-1k\} (Appendix~\ref{appendix-trivalent-deformation}). Let $\Aq \dtmor 1, \Aq \dtmor 2, \Aq \dtmor 3 \in \cstar{\dtmor_0}$, and write $\Aq c q = \det \Aq A q$. Here $l(\Aq T 2) = l(\Aq T 3) = l(T_0)$ while $l(\Aq T 1) = l(T_0) + 1$. For rows $h_1, h_2, h_3$ and columns $t_1, t_2, t_3$, the matrices $\Aq A q$ take the form:
	
	\vspace{0.5em}
	\noindent
	\begin{minipage}[t]{.3\textwidth} 
		\centering
		$\begin{pmatrix}
		2 & 1 & 0 &     \\
		0 & \frac{1}{k} & 0 & \dots\\
		0 & 0 & \frac{1}{k} &    \\
		0 & \Aq {a_{i2}} 1 & \dots\\
		&\vdots&& \ddots 
		\end{pmatrix}$
		\vspace{0.5em}
		
		$\Aq A 1$
	\end{minipage}\hspace{0.7em}
	\begin{minipage}[t]{.3\textwidth} 
		\centering
		$\begin{pmatrix}
		1 & 1 & 0 &     \\
		\frac{1}{k-1} & \frac{1}{k} & 0 & \dots\\
		0 & 0 & \frac{1}{k} &    \\
		\Aq {a_{i1}} 2 & \Aq {a_{i2}} 2 & \dots\\
		&\vdots&& \ddots 
		\end{pmatrix}$
		\vspace{0.5em}
		
		$\Aq A 2$ 
		
		\small{$\Aq {a_{i1}} 2 = \Aq {a_{i2}} 2$ for $i \ge 4$.}
	\end{minipage}\hspace{0.7em}
	\begin{minipage}[t]{.3\textwidth} 
		\centering
		$\begin{pmatrix}
		0 & 1 & 0 &     \\
		0 & \frac{1}{k} & 0 & \dots\\
		\frac{1}{k+1} & 0 & \frac{1}{k} &    \\
		\Aq {a_{i1}} 3 & \Aq {a_{i2}} 3 & \dots\\
		&\vdots&& \ddots 
		\end{pmatrix}$
		\vspace{0.5em}
		
		$\Aq A 3$
		
		\small{$\Aq {a_{i1}} 3 = \Aq {a_{i2}} 3$ for $i \ge 4$.}
	\end{minipage}
	\vspace{0.7em}
	
	Let $d_{0,i}$ be the least common denominator of row $i$ in $A_0$, and set $D_0 = \prod_{i=1}^{3g-3} d_{0,i}$. Since $\Aq {d_i} q = d_{0,i}$ for all $i \ge 4$, we have:
	\begin{align*}
	\Mult{\Aq \dtmor 1} + \Mult{\Aq \dtmor 2} + \Mult{\Aq \dtmor 3} 
	&= \frac{D_0}{2^{l(T_0)}} \Biggl( 
		\frac{\Aq {d_1} 1\Aq {d_2} 1\Aq {d_3} 1}{2 d_{0,1} d_{0,2} d_{0,3}} \Aq c 1 + 
		\frac{\Aq {d_1} 2\Aq {d_2} 2\Aq {d_3} 2}{d_{0,1} d_{0,2} d_{0,3}} \Aq c 2 \\
		&\quad + \frac{\Aq {d_1} 3\Aq {d_2} 3\Aq {d_3} 3}{d_{0,1} d_{0,2} d_{0,3}} \Aq c 3  \Biggr).
	\end{align*}
	Setting $\Aq b 1 = 1$, $\Aq b 2 = k-1$, and $\Aq b 3 = k+1$, the determinant identity across this limit states that $\frac{1}{2} \Aq b 1 \Aq c 1 + \Aq b 2 \Aq c 2 + \Aq b 3 \Aq c 3 = 0$. By Lemma~\ref{lemma-edge-deno}, the ratio $\prod_{j=1}^3 \Aq {d_j} q / \prod_{j=1}^3 d_{0,j}$ matches $\Aq b q$ whenever $\Aq c q \ne 0$, so the sum vanishes. The same cancellation holds for all other trivalent deformation cases in Appendix~\ref{appendix-trivalent-deformation}.
	
	The second statement follows directly from the local wall-crossing isomorphisms across non-trivalent limits constructed in Section~\ref{sec-constructions}.
\end{proof}

\subsection{Wall crossing} \label{subsec-walking-through-walls}
The balancing condition ensures that fiber counts with multiplicity are invariant across codimension-1 walls:

\begin{prop}[wall crossing] \label{proposition-walking-through-II}
	Let $\omega \colon [0,1] \to \MTrop g \setminus \Pi(\mathbb L_2 \FD g)$ be a path meeting $\Pi(\mathbb L_1 \FD g)$ in a single point $\mH_0 = \omega(s_0)$ with $s_0 \in (0,1)$. If $\omega(0)$ and $\omega(1)$ have general edge lengths, then
	\[
	\sum_{\tmor \in \inv \Pi(\omega(0))} \absMult \tmor = \sum_{\tmor \in \inv \Pi(\omega(1))} \absMult \tmor.
	\]
\end{prop}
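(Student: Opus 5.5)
We split $\omega$ at $s_0$ and reduce to a local statement at the wall. Choose $\varepsilon>0$ so small that $\omega([0,s_0-\varepsilon])$ and $\omega([s_0+\varepsilon,1])$ avoid $\Pi(\mathbb L_1 \FD g)$, and so that $\omega(s_0\pm\varepsilon)$ have general edge lengths; this is possible because the non-general locus is a finite union of proper linear subspaces in each cone $C_H$, and we may perturb $\omega$ slightly without meeting $\Pi(\mathbb L_2 \FD g)$. Proposition~\ref{proposition-generic-path} applied on both outer subpaths gives equality of fibre counts. Along these subpaths each morphism lifts continuously inside a single open cone $C_\dtmor$, so its combinatorial type, and hence $\absMult \tmor$, is constant. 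The weighted sums therefore agree as well, and it suffices to compare $\sum \absMult \tmor$ over $\omega(s_0-\varepsilon)$ and $\omega(s_0+\varepsilon)$.

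Near $\mH_0$, the preimage $\inv\Pi(\mH_0)$ is finite, by the fibre-finiteness following \eqref{eq-S}. Since $\omega$ avoids $\Pi(\mathbb L_2\FD g)$, every point of $\inv\Pi(\mH_0)$ either lies in the interior of a full-dimensional cone or lies on a codimension-1 face $C_{\dtmor_0}$ with $\dtmor_0\in\mathbb L_1\FD g$. In the first case the branch continues through $s_0$ with constant multiplicity. In the second case the lifts of $\omega|_{[s_0-\varepsilon,s_0+\varepsilon]}$ passing through that point are exactly the chambers in the multiset $\cstar{\dtmor_0}$. By Remark~\ref{re:multiset-specializing} and Lemma~\ref{lemma-count-realizations}, each such chamber contributes once for general lengths. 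Each $\overline{C}_\dtmor$ with $\dtmor\in\cstar{\dtmor_0}$ is a closed full-dimensional cone with facet $C_{\dtmor_0}$, by Lemma~\ref{lm:limit-matrix-change}. It therefore lies, locally, on exactly one side of the hyperplane $\Pi(C_{\dtmor_0})$, so we can partition $\cstar{\dtmor_0}=S_-\sqcup S_+$ according to whether $\omega(s_0-\varepsilon)$ or $\omega(s_0+\varepsilon)$ lies in $\Pi(\overline C_\dtmor)$. The claim then reduces to showing, for each wall point,
\[
\sum_{\dtmor\in S_-}\absMult\dtmor=\sum_{\dtmor\in S_+}\absMult\dtmor .
\]

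In the non-trivalent case, $H(\dtmor_0)$ has a vertex of valency at least $4$ or a contracted configuration. The types on either side then have different trivalent source types, and Proposition~\ref{prop-signed-mult}(2) gives that all members of $\cstar{\dtmor_0}$ have equal multiplicity. It remains to check $|S_-|=|S_+|$, and we would read this off from the casework of Section~\ref{sec-constructions}, which pairs each chamber on one side with one on the other.

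In the trivalent case, we use Proposition~\ref{prop-signed-mult}(1), which says $\sum_{\cstar{\dtmor_0}}\Mult\dtmor=0$. The key point is a sign analysis. Fix an orientation of $\RR^{E(H)}$, with $H=H(\dtmor_0)$ common to all chambers, and order the columns of each $A_\dtmor$ so that the shared columns (Lemma~\ref{lm:limit-matrix-change}) come first and the regrown column $t_k$ comes last. All $A_\dtmor$ then agree on a hyperplane-spanning set of columns, and $\operatorname{sign}\det A_\dtmor$ records on which side of the wall the image of the positive $t_k$-direction lies. Also $D_\dtmor>0$. Hence $\Mult\dtmor>0$ for $\dtmor\in S_+$ and $\Mult\dtmor<0$ for $\dtmor\in S_-$, and the vanishing sum becomes exactly the required equality of absolute multiplicities.

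The main obstacle is making this sign and side bookkeeping rigorous with the consistent orientation conventions of Remark~\ref{rem-sign-convention}, including the multiset repetitions of Example~\ref{ex-multiset-star}. We would handle it by computing each $\Mult\dtmor$ with respect to the common column ordering above rather than intrinsically, so that Proposition~\ref{prop-signed-mult}(1) is applied in precisely that normalization. Summing the local equalities over all the finitely many wall points, together with the unchanged interior branches, completes the argument.
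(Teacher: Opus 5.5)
Your proposal is correct and follows essentially the same route as the paper. Both arguments reduce to comparing points just on either side of $\mH_0$ and let interior branches pass through unchanged. At trivalent walls, both use the sign of $\Mult\dtmor$ (with a common column ordering) together with $\sum_{\cstar{\dtmor_0}}\Mult\dtmor=0$. At non-trivalent walls, both use equal multiplicities plus the equal Type~I/II/III counts from Section~\ref{sec-constructions}.

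One point of wording to fix: at a non-trivalent wall three chambers meet, so $S_-$ and $S_+$ are the two types visited by $\omega$ rather than a partition of $\cstar{\dtmor_0}$. The equal per-type counts still give $|S_-|=|S_+|$ exactly as you intend.
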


\begin{proof}
	By Proposition~\ref{proposition-generic-path}, the count is constant on $\omega \setminus \{\mH_0\}$; it suffices to compare points $\mH_1, \mH_2$ on opposite sides arbitrarily close to $\mH_0$. Since $\mH_0 \notin \Pi(\mathbb L_2 \FD g)$, morphisms in cone interiors contribute identically to both sides.
	
	For each limit $\dtmor_0 \in \inv \Pi(\mH_0) \cap \mathbb L_1 \FD g$, if $H(\dtmor_0)$ is trivalent, $C_{\dtmor_0}$ divides the ambient cone into two half-spaces containing $\mH_1$ and $\mH_2$. Cones project to opposite sides according to the sign of $\Mult \dtmor$, and the balancing condition $\sum_{\dtmor \in \cstar{\dtmor_0}} \Mult \dtmor = 0$ (Proposition~\ref{prop-signed-mult}(1)) equates the sums of multiplicities on each side. If $H(\dtmor_0)$ is non-trivalent, $\mH_0$ separates two chambers of $\MTrop g$, and Proposition~\ref{prop-signed-mult}(2) matches morphisms across the wall with equal multiplicities. Summing over all limits preserves the total count.
\end{proof}

\subsection{Proofs of the main theorems} \label{sec-proof-main-theorems}
We assemble the deformation results to prove Theorem~\ref{thm} and Corollary~\ref{theorem-gonality}.

\begin{lm}[{\cite[Proposition~3.3.3]{cap12}}] \label{lemma-tropM-connected-co1}
	The moduli space $\MTrop g$ is connected through codimension~1. \qed
\end{lm}

\begin{proof}[Proof of Theorem~\ref{thm}]
	Let $g = 2g'$ be even and $d = g'+1$. Let $\mH \in \MTrop g$ be a metric graph with general edge lengths, and choose a generic caterpillar of loops $\mH_L \in \MTrop g$.
	
	The locus $\Pi(\mathbb L_2 \FD g)$ has codimension at least~2 in $\MTrop g$, and $\FD g$ is finite. By Lemma~\ref{lemma-tropM-connected-co1}, there exists a continuous piecewise-linear path $\omega \colon [0,1] \to \MTrop g \setminus \Pi(\mathbb L_2 \FD g)$ connecting $\mH_L$ to $\mH$ that intersects $\Pi(\mathbb L_1 \FD g)$ transversely in finitely many points $0 < s_1 < \dots < s_k < 1$.
	
	Subdividing $\omega$ at these wall crossings decomposes it into general segments and wall crossings. By Proposition~\ref{proposition-generic-path}, the fiber count with multiplicity is constant along each general segment; by Proposition~\ref{proposition-walking-through-II}, it is invariant across each wall crossing. Therefore,
	\[
	\sum_{\tmor \in \inv \Pi(\mH)} \absMult \tmor = \sum_{\tmor \in \inv \Pi(\mH_L)} \absMult \tmor.
	\]
	For the caterpillar of loops $\mH_L$, Proposition~\ref{prop-divisors-on-chain} shows that $\inv \Pi(\mH_L)$ consists of exactly $C_{g/2}$ points. For each $\dtmor \in \FDCL g$, the matrix $A_\dtmor$ has unit determinant and $D_\dtmor = 2^{l(T)}$, so $\absMult \dtmor = 1$. Consequently, the fiber over every generic metric graph contains exactly $C_{g/2}$ points counted with multiplicity, proving that $\Pi$ is a branched cover of degree $C_{g/2}$ branched along the codimension-1 discriminant locus $\Pi(\mathbb L_1 \FD g)$.
	
	Because generic metric graphs are dense and fibers over them are non-empty, the closed map of cone complexes $\Pi$ is surjective onto $\MTrop g$. Finally, lifting $\omega$ connects every full-dimensional cone of $\TM d g$ across codimension-1 faces to the cones over the caterpillar locus, which are themselves connected through codimension-1 cells by Proposition~\ref{prop-divisors-on-chain}. Hence $\TM d g$ is connected through codimension~1.
\end{proof}

\begin{proof}[Proof of Corollary~\ref{theorem-gonality}]
	If $g$ is even, surjectivity of $\Pi$ in Theorem~\ref{thm} provides a tropical morphism from a tropical modification of $\mG$ to a metric tree of degree $g/2 + 1 = \lceil g/2 \rceil + 1$.
	
	Suppose $g$ is odd. Choose an interior point $x$ on an edge of $\mG$, and attach a self-loop via a bridge to form a metric graph $\mG_b$ of even genus $g+1$. Since $g$ is odd, $(g+1)/2 + 1 = \lceil g/2 \rceil + 1$. By Theorem~\ref{thm}, there is a tropical morphism $\tmor_b \colon \mG_b' \to \mT_b$ from a tropical modification $\mG_b'$ of $\mG_b$ to a metric tree $\mT_b$ of degree $\lceil g/2 \rceil + 1$.
	
	By Lemma~\ref{lm:bridge-and-loop}, the bridge and loop attached at $x$ map to two distinct edges of $\mT_b$, and no other non-dangling elements of $\mG_b'$ map to these edges. Deleting these two edges from $\mT_b$ and restricting $\tmor_b$ yields a tropical morphism from a tropical modification of $\mG$ to a metric tree of degree $\lceil g/2 \rceil + 1$. Hence the tree gonality of any genus-$g$ metric graph is at most $\lceil g/2 \rceil + 1$.
\end{proof}


\section{Constructions: changing combinatorial type} \label{sec-constructions}

We construct the local deformations across codimension-1 walls where the combinatorial type of the source graph changes, proving the balancing condition of Proposition~\ref{prop-signed-mult}(2) for non-trivalent limits.

\subsection{Combinatorial setup and local determinants} \label{subsec-setup-determinants}
Let $\dtmor \colon G \to T$ be a full-dimensional \DTmor and let $t$ be an edge of $T$ whose contraction yields a codimension-1 limit $\dtmor_0$ with non-trivalent combinatorial type $H_0 = H(\dtmor_0)$.

Because $H_0$ is non-trivalent, $|E(H_0)| \le 3g - 4$. Since $\dtmor_0$ has codimension~1, its edge-length matrix satisfies $\rk A_{\dtmor_0} = 3g - 4$, which forces $|E(H_0)| = 3g - 4$. It follows that $H_0$ has a unique 4-valent vertex $A$, while all other vertices are trivalent. 

To analyze the full-dimensional morphisms specializing to $\dtmor_0$, we fix the following labelling conventions. Let $\Aq \dtmor q$ be any such morphism:
\begin{enumerate}[(1)]
	\item \textbf{The contracting edge in the source:} Exactly one edge of $\Aq H q$ contracts to the 4-valent vertex $A$; we label this edge $\Aq {h_1} q$, and denote its trivalent endpoints in $V(\Aq H q)$ by $A^{(q)}_1$ and $A^{(q)}_2$ (so $\nddeg \Aq A q _1 = \nddeg \Aq A q _2 = 3$). Aside from $\Aq {h_1} q$, the edges of $\Aq H q$ correspond bijectively to those of $H_0$, and for each $h \in E(H_0)$ we denote by $\Aq h q$ the corresponding edge in $E(\Aq H q)$.
	
	\item \textbf{The contracting edge in the target:} In the base tree $\Aq T q$, we label the edge contracting to $w_0 \in V(T_0)$ by $t_1$, with endpoints $u$ and $v$.
	
	\item \textbf{Edges incident to $A$:} In $G_0$, we label the four non-dangling edges incident to $A$ as $e_2, e_3, e_4, e_5$, and write $k_i = |e_i|$ for their cardinalities. The corresponding edges of $H_0$ incident to $A$ are labeled $h_2, h_3, h_4, h_5$.
	
	\item \textbf{Placement above $u$ and $v$:} When both $\vale u$ and $\vale v$ are distinct from~1, the no-return condition implies that $A^{(q)}_1$ and $A^{(q)}_2$ lie above distinct vertices of $\Aq T q$; we adopt the convention that $A^{(q)}_1$ lies above $u$ and $A^{(q)}_2$ lies above $v$. Under this convention, the edge $\Aq {h_1} q$ contains a unique edge class $\Aq e q _1$ above $t_1$, with cardinality $\Aq k q _1 = |\Aq e q _1|$.
\end{enumerate}
	
	\subsubsection{Vertices above \texorpdfstring{$w_0$}{w0}} \label{sub-above-w0}
	Before analyzing the combinatorial resolutions of $H_0$, we verify that the wall-crossing deformation is localized entirely at the 4-valent vertex $A$, while all other vertices in the fiber above $w_0$ remain rigid:

	\begin{lm}[rigidity above $w_0$] \label{lemma-above-w0}
		Let $\dtmor_0 \colon G_0 \to T_0$ be a codimension-1 limit with non-trivalent combinatorial type $H_0$. Let $A \in V(G_0)$ be the unique 4-valent vertex of $H_0$, and let $w_0 \in V(T_0)$ be the image of the contracting edge $t_1$. Then
		\[
		r_0(A) = \ch w_0 = 4 - \vale w_0,
		\]
		and $r_0(B) = 0$ for every other vertex $B \in V(G_0) \setminus \{A\}$ above $w_0$. In particular, all classes in $\Aq \dtmor q$ contracting to any vertex $B \ne A$ are uniquely determined by the base tree $\Aq T q$, so the deformation across $\dtmor_0$ is localized entirely at $A$.
	\end{lm}

	\begin{proof}
		Because any full-dimensional morphism $\Aq \dtmor q$ specializing to $\dtmor_0$ is change-minimal at both endpoints $u$ and $v$ of $t_1$, contracting $t_1$ gives $\ch w_0 = 4 - \vale w_0$. Since $\ch w_0 = \sum_{B \in \inv \dtmor_0(w_0)} r_0(B)$ with $r_0(B) \ge 0$ for all $B$, it suffices to show that $r_0(A) = \ch w_0$.
		
		We check the three possibilities for $\vale w_0$:
		\begin{itemize}
			\item If $\vale w_0 = 4$, then $\ch w_0 = 0$. Since $r_0(A) \ge 0$, we have $r_0(A) = 0$.
			\item If $\vale w_0 = 3$, then $\ch w_0 = 1$ and $\vale u = 2$. By the local properties on $\Aq A q _1$ (Proposition~\ref{prop-local}), we have $r_{\Aq \dtmor q}(\Aq A q _1) = 1$, which contracts to give $r_0(A) = 1$.
			\item If $\vale w_0 = 2$, then $\ch w_0 = 2$. If $\vale u = 1$ and $\vale v = 3$, the edge connecting $\Aq A q _1$ and $\Aq A q _2$ passes above $u$, so a vertex with $r$-value equal to~2 contracts to $A$. If $\vale u = \vale v = 2$, local properties imply $r_{\Aq \dtmor q}(\Aq A q _1) = r_{\Aq \dtmor q}(\Aq A q _2) = 1$, both contracting to $A$. In either case, $r_0(A) = 2$.
		\end{itemize}
		Thus $r_0(A) = \ch w_0$ in all cases, forcing $r_0(B) = 0$ for every $B \ne A$.
	\end{proof}  
	
	\subsubsection{Graphs contracting to \texorpdfstring{$H_0$}{H0}}
	To construct a trivalent graph that contracts to $H_0$, the 4-valent vertex $A$ must resolve into two trivalent vertices $A_1^{(q)}$ and $A_2^{(q)}$ connected by the edge $\Aq {h_1} q$. Choosing two of the incident edges $\{h_2, h_3, h_4, h_5\}$ to meet at $A_1^{(q)}$ and the remaining two at $A_2^{(q)}$ defines the combinatorial type $H_{\alpha,\beta}$ (or $H_S$ with $S = \{\alpha, \beta\}$). Assuming without loss of generality that $h_2$ is incident to $A_1^{(q)}$, there are three possible combinatorial resolutions:
	\begin{itemize}
		\item \textbf{Type I ($H_{2,3}$):} $h_2$ and $h_3$ are incident to $A_1^{(q)}$; $h_4$ and $h_5$ are incident to $A_2^{(q)}$.
		\item \textbf{Type II ($H_{2,4}$):} $h_2$ and $h_4$ are incident to $A_1^{(q)}$; $h_3$ and $h_5$ are incident to $A_2^{(q)}$.
		\item \textbf{Type III ($H_{2,5}$):} $h_2$ and $h_5$ are incident to $A_1^{(q)}$; $h_3$ and $h_4$ are incident to $A_2^{(q)}$.
	\end{itemize}
	See the diagrams below.
	
	\noindent \begin{minipage}{.23\textwidth}
		\centering
		\begin{overpic}[scale=1.2]{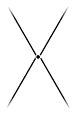} 
			\put (27,62) {\scalebox{0.8}{$A$}} 
			\put (4,75) {\scalebox{0.8}{$h_2$}}
			\put (6,34) {\scalebox{0.8}{$h_3$}}
			\put (52,75) {\scalebox{0.8}{$h_4$}}
			\put (48,34) {\scalebox{0.8}{$h_5$}}		 
		\end{overpic}
		
		$H_0$
	\end{minipage}
	\begin{minipage}{.23\textwidth}
		\centering
		\begin{overpic}[scale=1.2]{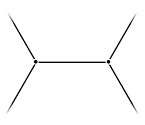}  
			\put (23.5,47) {\scalebox{0.8}{$A_1^{(q)}$}}
			\put (62,47) {\scalebox{0.8}{$A_2^{(q)}$}}
			\put (2,52) {\scalebox{0.8}{$\Aq {h_2} q$}}
			\put (2,26) {\scalebox{0.8}{$\Aq {h_3} q$}}
			\put (87,52) {\scalebox{0.8}{$\Aq {h_4} q$}}
			\put (87,26) {\scalebox{0.8}{$\Aq {h_5} q$}}
			\put (44,31) {\scalebox{0.8}{$\Aq {h_1} q$}}		
		\end{overpic} 
		
		$H_{2,3}$ \\ Type I
	\end{minipage}\hspace{1em}
	\begin{minipage}{.23\textwidth} 
		\centering
		\begin{overpic}[scale=1.2]{\figsdir/124.pdf}  
			\put (23.5,47) {\scalebox{0.8}{$A_1^{(q)}$}}
			\put (62,47) {\scalebox{0.8}{$A_2^{(q)}$}}
			\put (2,52) {\scalebox{0.8}{$\Aq {h_2} q$}}
			\put (2,26) {\scalebox{0.8}{$\Aq {h_4} q$}}
			\put (87,52) {\scalebox{0.8}{$\Aq {h_3} q$}}
			\put (87,26) {\scalebox{0.8}{$\Aq {h_5} q$}}		
			\put (44,31) {\scalebox{0.8}{$\Aq {h_1} q$}}
		\end{overpic}

		$H_{2,4}$ \\ Type II
	\end{minipage}\hspace{1em}
	\begin{minipage}{.23\textwidth} 
		\centering
		\begin{overpic}[scale=1.2]{\figsdir/124.pdf} 
			\put (23.5,47) {\scalebox{0.8}{$A_1^{(q)}$}}
			\put (62,47) {\scalebox{0.8}{$A_2^{(q)}$}}
			\put (2,52) {\scalebox{0.8}{$\Aq {h_2} q$}}
			\put (2,26) {\scalebox{0.8}{$\Aq {h_5} q$}}
			\put (87,52) {\scalebox{0.8}{$\Aq {h_4} q$}}
			\put (87,26) {\scalebox{0.8}{$\Aq {h_3} q$}}		
			\put (44,31) {\scalebox{0.8}{$\Aq {h_1} q$}}
		\end{overpic}  
		
		$H_{2,5}$ \\ Type III
	\end{minipage} \vspace{1em}
	
	\subsubsection{Counting and multiplicity preservation}
	To establish Proposition~\ref{prop-signed-mult}(2), we must show that across any non-trivalent limit $\dtmor_0$, the full-dimensional morphisms specializing to $\dtmor_0$ have equal multiplicities and partition equally among Types I, II, and III. We first relate their determinants and multiplicities across the contracting edge:
	
	\begin{lm}[determinants and multiplicities across non-trivalent limits] \label{lm:change-comb-type} \label{lemma-change-comb-type}
		Let $\Aq \dtmor q$ and $\Aq \dtmor {q'}$ be full-dimensional morphisms specializing to $\dtmor_0$ by contracting $t_1$. Then:
		\begin{enumerate}[(1)]
			\item Their edge-length determinants satisfy
			\[
			\Aq k q _1 \det(A_{\Aq \dtmor q}) = \Aq k {q'} _1 \det(A_{\Aq \dtmor {q'}}),
			\]
			where $\Aq k q _1$ is $\frac{1}{2}$ if $t_1$ is incident to a leaf, and otherwise the cardinality of the class $\Aq e q _1$ containing the contracting edge $\Aq {h_1} q$.
			\item Their signed multiplicities are equal:
			\[
			\Mult{\Aq \dtmor q} = \Mult{\Aq \dtmor {q'}}.
			\]
		\end{enumerate}
	\end{lm}
	
	\begin{proof}
		For (1), by Lemma~\ref{lm:limit-matrix-change}, the matrices $A_{\Aq \dtmor q}$ and $A_{\Aq \dtmor {q'}}$ differ only in the row and column corresponding to the contracting edges $\Aq {h_1} q$ and $t_1$. A cofactor expansion along the row of $\Aq {h_1} q$, where the only non-zero entry is $1/\Aq k q _1$, expresses $\det(A_{\Aq \dtmor q})$ as $(1/\Aq k q _1) \det(A_{\dtmor_0})$. The same expansion for $q'$ yields the equality.
		
		For (2), by Lemma~\ref{lemma-edge-deno}, the denominator of the row corresponding to $\Aq {h_1} q$ is $d_1^{(q)} = \Aq k q _1$, while the denominators of all other rows and the leaf count $l(T)$ are identical for $\Aq \dtmor q$ and $\Aq \dtmor {q'}$. Multiplying the determinant identity in (1) by $\prod_{i \ge 2} d_i / 2^{l(T)}$ immediately yields $\Mult{\Aq \dtmor q} = \Mult{\Aq \dtmor {q'}}$.
	\end{proof}
	
	It remains to prove that the full-dimensional morphisms specializing to $\dtmor_0$ partition equally among Types I, II, and III. We verify this case by case for each valency $\vale w_0 \in \{4, 3, 2\}$ in Subsections~\ref{subsec-case-v4}, \ref{subsec-case-v3}, and \ref{subsec-case-v2}.  
	
	\subsection{Valency-4 limits: Case \{v4-nd4\}} \label{subsec-case-v4}
	Let $\vale w_0 = 4$. By Lemma~\ref{lemma-above-w0}, $r_0(A) = 0$, so $r_{\Aq \dtmor q}(\Aq A q _1) = r_{\Aq \dtmor q}(\Aq A q _2) = 0$. Since the endpoints $u, v$ of $t_1$ are trivalent, Case~(r0-nd3) of Proposition~\ref{prop-local} implies that the four edges of $G_0$ incident to $A$ lie above distinct edges of $T_0$. Furthermore, $\Aq e q_1$ is the unique edge of $\Aq G q$ above $t_1$ that contracts to $A$.

	Label the edges of $G_0$ and $T_0$ such that $e_i$ is above $t_i$ and $|e_5| \ge |e_4| \ge |e_3| \ge |e_2|$. The three trees contracting to $T_0$ are $T_{2,3}, T_{2,4}$, and $T_{2,5}$. For each $S \in \{\{2,3\}, \{2,4\}, \{2,5\}\}$, setting $\Aq T q = T_S$ specifies the combinatorial type $H_S$. Applying the non-dangling excess formula (Lemma~\ref{lem-rphi-nd}) to $A$ gives identity~\eqref{eq-sqr}:
	\begin{align} \label{eq-sqr} \tag{$\square$}
	k_2 + k_3 + k_4 + k_5 = 2|A| + 2. 
	\end{align}

	The local part of $\Aq \dtmor q$ contracting to $A$ is uniquely determined by the values $|\Aq A q _1|$, $|\Aq A q _2|$, $\Aq k q _1 = |\Aq e q _1|$, and the base tree. For each $\Aq \dtmor q$, we introduce indices $\{\alpha, \beta, \gamma, \delta\} = \{2,3,4,5\}$ and relabel $\Aq A q _1, \Aq A q _2$ as $\Aq A q _-, \Aq A q _+$ such that:
	\begin{itemize}
		\item $\Aq e q _\alpha$ and $\Aq e q _\beta$ are incident to $\Aq A q _-$, while $\Aq e q _\gamma$ and $\Aq e q _\delta$ are incident to $\Aq A q _+$;
		\item $k_\alpha + k_\beta \le k_\gamma + k_\delta$, with $\alpha = 2$ if equality holds;
		\item $k_\alpha \le k_\beta$, with $\alpha = 2$ whenever $2 \in \{\alpha, \beta\}$;
		\item $k_\gamma \le k_\delta$, with $\delta = 5$ whenever $5 \in \{\gamma, \delta\}$.
	\end{itemize}
	These conditions uniquely specify the relabelling. Since both $\Aq A q _-$ and $\Aq A q _+$ are incident to $\Aq e q _1$, Lemma~\ref{lem-rphi-nd} implies $|\Aq A q _-| \le |\Aq A q _+|$, motivating the signs.

	We set $|\Aq A q _+| = |A| - K$ for an integer $K \ge 0$. Because $\max(k_\gamma, k_\delta) = k_\delta \le |\Aq A q _+| \le |A|$, fixing $K$ determines the sizes of $\Aq k q _1$ and $|\Aq A q _-|$ via Lemma~\ref{lem-rphi-nd}:
	\begin{align*}
	|\Aq A q _+| &= |A| - K,\\
	|\Aq k q _1| &= k_\alpha + k_\beta - 1 - 2K,\\
	|\Aq A q _-| &= k_\alpha + k_\beta - 1 - K.
	\end{align*}
	The upper bounds $|A| \ge |\Aq A q _+|, |\Aq A q _-|, \Aq k q _1$ hold for all $K \ge 0$ since $k_\alpha + k_\beta \le |A| + 1$ by \eqref{eq-sqr}. The lower bounds $|\Aq A q _+| \ge k_\delta$, $|\Aq A q _-| \ge k_\beta$, and $\Aq k q _1 \ge 1$ determine the admissible range for $K$ according to whether $k_5 + k_2 \le |A| + 1$ or $k_5 + k_2 > |A| + 1$:

	\paragraph*{Subcase \{v4-nd4-1\} ($k_5 + k_2 \le |A| + 1$):}
	Here $k_\alpha = k_2$. The second lower bound $|\Aq A q _-| = k_2 + k_\beta - 1 - K \ge k_\beta$ holds if and only if $K \le k_2 - 1$. Assuming $K \le k_2 - 1$, we have $|\Aq A q _+| = |A| - K \ge |A| - k_2 + 1 \ge k_\delta$ since $k_2 + k_\delta \le |A| + 1$, satisfying the first lower bound. Finally, $\Aq k q _1 = |\Aq A q _-| - K \ge k_2 - (k_2 - 1) = 1$.

	Taking branch swappings into account, there are exactly $k_2$ morphisms for each base tree. See the figures below for $K = 1$:

	\vspace{0.5em}
	\noindent  
	\begin{minipage}[t]{.23\textwidth}
		\centering
		\begin{overpic}{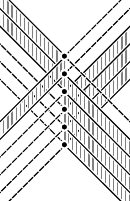} 
			\put (28.5,19) {\scalebox{0.8}{$w_0$}} 
			\put (29,76) {\scalebox{0.8}{$A$}} 
			\put (-8,34) {\scalebox{0.8}{$e_2$}}
			\put (-8,92) {\scalebox{0.8}{$e_3$}}
			\put (66,70) {\scalebox{0.8}{$e_4$}}
			\put (66,12) {\scalebox{0.8}{$e_5$}}		 
		\end{overpic}
		
		\vspace{1em}  
		$\dtmor_0$
	\end{minipage}	
	\begin{minipage}[t]{.23\textwidth} 
		\centering
		\begin{overpic}{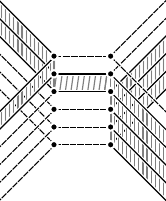}
			\put (47,15) {\scalebox{0.7}{$A_+^{(1)}$}}
			\put (25,75) {\scalebox{0.7}{$A_-^{(1)}$}}
		\end{overpic} 
		
		\vspace{1em}
		$\Aq \dtmor 1$\\Type I
	\end{minipage}\hspace{0.7em}
	\begin{minipage}[t]{.23\textwidth}
		\centering 
		\begin{overpic}{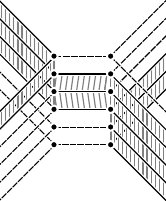} 
			\put (45,17) {\scalebox{0.7}{$A_+^{(2)}$}}
			\put (25,75) {\scalebox{0.7}{$A_-^{(2)}$}}
		\end{overpic} 
		
		\vspace{1em}
		$\Aq \dtmor 2$ \\Type II
	\end{minipage}\hspace{0.7em}
	\begin{minipage}[t]{.23\textwidth} 
		\centering
		\begin{overpic}{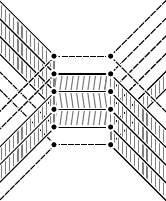} 
			\put (45,17) {\scalebox{0.7}{$A_-^{(3)}$}}
			\put (25,75) {\scalebox{0.7}{$A_+^{(3)}$}}
		\end{overpic}
		
		\vspace{1em}  
		$\Aq \dtmor 3$ \\Type III
	\end{minipage}

	\vspace{1em}

	\paragraph*{Subcase \{v4-nd4-2\} ($k_5 + k_2 > |A| + 1$):}
	Here $k_\delta = k_5$. The first lower bound $|\Aq A q _+| = |A| - K \ge k_5$ holds if and only if $K \le |A| - k_5$. This implies $|\Aq A q _-| = k_\alpha + k_\beta - 1 - K \ge k_\alpha + k_\beta + k_5 - 1 - |A| = |A| - k_\gamma$ by~\eqref{eq-sqr}. Since $k_\alpha + k_5 > |A| + 1$, we have $k_\beta + k_\gamma \le |A|$, so $|A| - k_\gamma \ge k_\beta$, satisfying the second lower bound. Lastly, $\Aq k q _1 = |\Aq A q _-| - K \ge k_5 - (|A| - k_5) > 1$ since $2k_5 \ge k_5 + k_2 > |A| + 1$.

	Taking branch swappings into account, there are exactly $|A| - k_5 + 1$ morphisms for each base tree. See the figures below for $K = 0$:

	\vspace{0.5em}
	\noindent  
	\begin{minipage}[t]{.23\textwidth}
		\centering
		\begin{overpic}{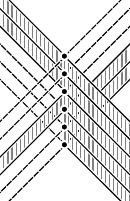} 
			\put (28.5,19) {\scalebox{0.8}{$w_0$}} 
			\put (29,76) {\scalebox{0.8}{$A$}} 
			\put (-8,34) {\scalebox{0.8}{$e_2$}}
			\put (-8,92) {\scalebox{0.8}{$e_3$}}
			\put (66,70) {\scalebox{0.8}{$e_4$}}
			\put (66,12) {\scalebox{0.8}{$e_5$}}		 
		\end{overpic}
		
		\vspace{1em}  
		$\dtmor_0$
	\end{minipage}	
	\begin{minipage}[t]{.23\textwidth} 
		\centering
		\begin{overpic}{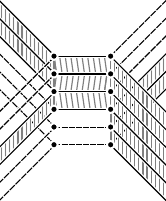}
			\put (45,75) {\scalebox{0.7}{$A_+^{(1)}$}}
			\put (25,75) {\scalebox{0.7}{$A_-^{(1)}$}}
		\end{overpic} 
		
		\vspace{1em}
		$\Aq \dtmor 1$\\ Type I
	\end{minipage}\hspace{0.7em}
	\begin{minipage}[t]{.23\textwidth}
		\centering 
		\begin{overpic}{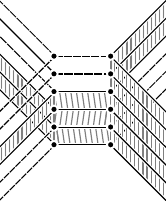}  
			\put (45,75) {\scalebox{0.7}{$A_+^{(2)}$}}
			\put (25,75) {\scalebox{0.7}{$A_-^{(2)}$}}
		\end{overpic} 
		
		\vspace{1em}
		$\Aq \dtmor 2$ \\ Type II 
	\end{minipage}\hspace{0.7em}
	\begin{minipage}[t]{.23\textwidth} 
		\centering
		\begin{overpic}{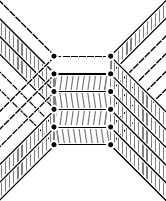} 
			\put (45,75) {\scalebox{0.7}{$A_+^{(3)}$}}
			\put (24,18) {\scalebox{0.7}{$A_-^{(3)}$}}
		\end{overpic}
		
		\vspace{1em}  
		$\Aq \dtmor 3$ \\Type III
	\end{minipage}

	\vspace{1em}

	In both subcases, the range of admissible values is $0 \le K \le \min(k_2 - 1, \, |A| - k_5)$, and each value of $K$ produces a distinct morphism for each base tree $T_S$. Since the three trees $T_{2,3}, T_{2,4}, T_{2,5}$ yield the three distinct combinatorial types $H_{2,3}, H_{2,4}, H_{2,5}$, there are exactly $\min(k_2 - 1, \, |A| - k_5) + 1$ full-dimensional morphisms of Type~I, Type~II, and Type~III, respectively.

	\subsection{Valency-3 limits: Case \{v3-nd4\}} \label{subsec-case-v3}
	Let $\vale w_0 = 3$. The contracting edge $t_1$ has endpoints $u$ and $v$ with $\vale u + \vale v = 5$. Since $t_1$ is an internal edge of $\Aq T q$, we have $\vale u = 2$ and $\vale v = 3$. For clarity, we denote the vertices $A^{(q)}_1$ and $A^{(q)}_2$ above $u$ and $v$ by $\Aq A q _u$ and $\Aq A q _v$, respectively. 
	
	By Lemma~\ref{lemma-above-w0}, $r_0(A) = 4 - \vale w_0 = 1$, with $r_{\Aq \dtmor q}(\Aq A q _u) = 1$ and $r_{\Aq \dtmor q}(\Aq A q _v) = 0$. Applying the non-dangling excess formula (Lemma~\ref{lem-rphi-nd}) to $A$ in $G_0$ gives
	\[
	r_0(A) = 2|A| + 4 - 2 - (k_2 + k_3 + k_4 + k_5) = 2|A| + 2 - (k_2 + k_3 + k_4 + k_5).
	\]
	Since $r_0(A) = 1$, we obtain identity~\eqref{eq-tri}:
	\begin{align} \label{eq-tri} \tag{$\boxplus$} %
	k_2 + k_3 + k_4 + k_5 = 2|A| + 1. 
	\end{align} 
	By the no-return condition, the four non-dangling edges $e_i$ incident to $A$ in $G_0$ must lie above at least two distinct edges of $T_0$. If they lay above at most two edges, the sum of their cardinalities would satisfy $\sum k_i \le 2|A|$, contradicting~\eqref{eq-tri}. Thus, the four edges lie above all three edges of $T_0$ incident to $w_0$: exactly two edges lie above the same edge of $T_0$, and the other two lie above distinct edges.
	We label the edges of $T_0$ and $G_0$ such that:
	$e_2$ and $e_5$ are above $t_2$ with $k_2 \le k_5$;
	$e_3$ is above $t_3$;
	$e_4$ is above $t_4$ with $k_3 \le k_4$.
	
	Since $e_2$ and $e_5$ are above the same edge $t_2$, we have $|A| \ge k_2 + k_5$. Moreover, since $k_5 \ge k_2$ and $k_4 \ge k_3$, identity~\eqref{eq-tri} implies
	\[
	2(k_4 + k_5) \ge (k_2 + k_3) + (k_4 + k_5) = 2|A| + 1,
	\]
	which forces $k_4 + k_5 \ge |A| + 1$.
	
	There are three choices for the base tree $\Aq T q$, determined by which edge incident to $w_0$ attaches to the divalent vertex $u$: namely $T_2, T_3$, or $T_4$. We show that the base tree and the cardinalities $k_i$ completely determine $|\Aq A q _u|$, $|\Aq A q _v|$, and $\Aq k q _1$:

	\paragraph*{Base tree $\Aq T q = T_2$:}
	Here $t_2$ is incident to $u$, so $e_2$ and $e_5$ lie above the edge incident to $u$. By Case~(r1) of Proposition~\ref{prop-local} applied to $\Aq A q _u$, we have $|\Aq A q _u| = \Aq k q _1 = k_2 + k_5$. The vertex $\Aq A q _v$ is incident to $\Aq e q _1$ (above $t_1$), $e_3$ (above $t_3$), and $e_4$ (above $t_4$). Applying Case~(r0) of Proposition~\ref{prop-local} to $\Aq A q _v$ gives
	\[
	2|\Aq A q _v| + 1 = k_3 + k_4 + \Aq k q _1 = k_3 + k_4 + k_2 + k_5 = 2|A| + 1,
	\]
	so $|\Aq A q _v| = |A|$. The edges meeting at $\Aq A q _u$ are $h_2$ and $h_5$, so this morphism has Type~III ($H_{2,5}$).

	\paragraph*{Base trees $\Aq T q = T_\alpha$ ($\alpha \in \{3, 4\}$):}
	Here $t_\alpha$ is incident to $u$, while $t_2$ and the remaining edge $t_\beta$ (where $\{\alpha, \beta\} = \{3, 4\}$) are incident to $v$. Applying Case~(r1) to $\Aq A q _u$ shows that $\Aq A q _u$ is incident to $\Aq e q _\alpha$ (above $t_\alpha$), $\Aq e q _1$ (above $t_1$), and a second edge $e'$ above $t_1$, satisfying $k_\alpha = \Aq k q _1 + |e'|$. The edge $e'$ connects $\Aq A q _u$ to a vertex $A'$ above $v$ with $\nddeg A' = 2$. By Case~(r0), $A'$ is incident to an edge $\Aq e q _\delta$ above $t_2$ (with $\delta \in \{2, 5\}$), so $|e'| = |\Aq e q _\delta| = k_\delta$. This gives
	\[
	\Aq k q _1 = k_\alpha - k_\delta, \quad \text{requiring} \quad k_\alpha > k_\delta.
	\]
	Let $\gamma \in \{2, 5\} \setminus \{\delta\}$, so $\{\alpha, \beta, \gamma, \delta\} = \{2, 3, 4, 5\}$. The vertex $\Aq A q _v$ is incident to $\Aq e q _1$, $\Aq e q _\beta$, and $\Aq e q _\gamma$. Case~(r0) applied to $\Aq A q _v$ gives
	\[
	2|\Aq A q _v| + 1 = k_\beta + k_\gamma + \Aq k q _1 = k_\beta + k_\gamma + k_\alpha - k_\delta = (2|A| + 1) - 2k_\delta,
	\]
	so $|\Aq A q _v| = |A| - k_\delta$.

	For $\Aq A q _v$ to be a valid vertex, its cardinality must be at least that of each incident edge: $|\Aq A q _v| \ge \Aq k q _1 = k_\alpha - k_\delta$ (giving $|A| \ge k_\alpha$), $|\Aq A q _v| \ge k_\gamma$ (giving $|A| \ge k_\gamma + k_\delta = k_2 + k_5$), and $|\Aq A q _v| \ge k_\beta$ (giving $|A| \ge k_\beta + k_\delta$). The first two inequalities always hold. By~\eqref{eq-tri}, the third inequality $|A| \ge k_\beta + k_\delta$ is equivalent to
	\[
	k_\alpha + k_\gamma \ge |A| + 1.
	\]
	Whenever this inequality holds, we also have $k_\alpha \ge |A| + 1 - k_\gamma \ge (k_\gamma + k_\delta) + 1 - k_\gamma = k_\delta + 1 > k_\delta$, so the condition $k_\alpha > k_\delta$ is automatically satisfied. Thus, the existence of a morphism with associated pair $(\alpha, \delta)$ is equivalent to $|A| \ge k_\beta + k_\delta$ (or $k_\alpha + k_\gamma \ge |A| + 1$).
	
	The resulting combinatorial type has $\{h_\alpha, h_\delta\}$ meeting at $\Aq A q _u$ and $\{h_\beta, h_\gamma\}$ meeting at $\Aq A q _v$. When $\delta = 2$, this contains the edge $h_2$, giving type $H_{2, \alpha}$ (Type~I if $\alpha = 3$, Type~II if $\alpha = 4$); when $\delta = 5$, we have $\gamma = 2$, so the pair containing $h_2$ is $\{h_\beta, h_2\}$, giving type $H_{2, \beta}$ (Type~I if $\beta = 3$, Type~II if $\beta = 4$).

	The four possible pairs $(\alpha, \delta) \in \{3, 4\} \times \{2, 5\}$ are:
	\begin{itemize}
		\item $(4, 2)$: Here $\beta = 3, \gamma = 5$. The condition $k_4 + k_5 \ge |A| + 1$ is always satisfied, yielding a morphism of Type~II.
		\item $(3, 5)$: Here $\beta = 4, \gamma = 2$. The condition $k_3 + k_2 \ge |A| + 1$ never holds, since $k_3 \le k_4$ and $k_2 \le k_5$ would make $\sum k_i \ge 2|A| + 2$, contradicting~\eqref{eq-tri}.
		\item $(3, 2)$: Here $\beta = 4, \gamma = 5$. The condition $k_4 + k_2 \le |A|$ yields a morphism of Type~I.
		\item $(4, 5)$: Here $\beta = 3, \gamma = 2$. The condition $k_4 + k_2 \ge |A| + 1$ yields a morphism of Type~I.
	\end{itemize}
	The conditions for $(3, 2)$ and $(4, 5)$ are complementary and mutually exclusive, so exactly one morphism of Type~I is realized in every case.

	\paragraph*{Subcase \{v3-nd4-1\} ($k_4 + k_2 \le |A|$):}
	Type~I is realized by the pair $(3,2)$. See the diagrams below.
	
	\vspace{1em}
	\noindent  
	\begin{minipage}[t]{.23\textwidth}
		\centering
		\begin{overpic}{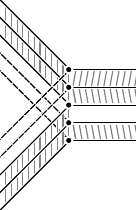} 
			\put (32,26) {\scalebox{0.8}{$w_0$}} 
			\put (32,70) {\scalebox{0.8}{$A$}} 
			\put (-9,7) {\scalebox{0.8}{$e_4$}}
			\put (-9,92) {\scalebox{0.8}{$e_3$}}
			\put (66,57) {\scalebox{0.8}{$e_5$}}
			\put (66,36) {\scalebox{0.8}{$e_2$}}		 
		\end{overpic}
		
		\vspace{1em}  
		$\dtmor_0$
	\end{minipage}	
	\begin{minipage}[t]{.23\textwidth} 
		\centering
		\begin{overpic}{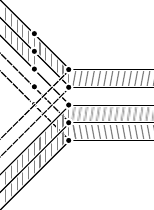} 
			\put (29,23) {\scalebox{0.8}{$A_v^{(1)}$}}
			\put (13,88) {\scalebox{0.8}{$A_u^{(1)}$}}
		\end{overpic}
		
		\vspace{1em}
		$\Aq \dtmor 1$\\Type I
	\end{minipage}\hspace{0.7em}
	\begin{minipage}[t]{.23\textwidth}
		\centering     
		\begin{overpic}{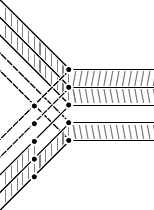} 
			\put (16,8) {\scalebox{0.8}{$A_u^{(2)}$}}
			\put (32,70) {\scalebox{0.8}{$A_v^{(2)}$}}
		\end{overpic}
		
		\vspace{1em}
		$\Aq \dtmor 2$ \\Type II
	\end{minipage}\hspace{0.7em}
	\begin{minipage}[t]{.23\textwidth} 
		\centering
		\begin{overpic}{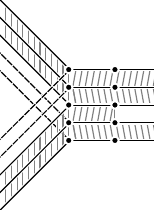}
			\put (50,22) {\scalebox{0.8}{$A_u^{(3)}$}}
			\put (31,70) {\scalebox{0.8}{$A_v^{(3)}$}}
		\end{overpic} 
		
		\vspace{1em}  
		$\Aq \dtmor 3$ \\Type III
	\end{minipage}
	
	\vspace{1em}

	\paragraph*{Subcase \{v3-nd4-2\} ($k_4 + k_2 \ge |A| + 1$):}
	Type~I is realized by the pair $(4,5)$. See the diagrams below.
	
	\vspace{1em}
	\noindent  
	\begin{minipage}[t]{.23\textwidth}
		\centering
		\begin{overpic}{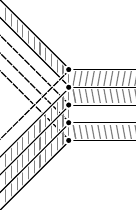} 
			\put (28.5,22) {\scalebox{0.8}{$w_0$}} 
			\put (29,76) {\scalebox{0.8}{$A$}} 
			\put (-9,10) {\scalebox{0.8}{$e_4$}}
			\put (-9,95) {\scalebox{0.8}{$e_3$}}
			\put (66,56) {\scalebox{0.8}{$e_5$}}
			\put (66,36) {\scalebox{0.8}{$e_2$}}		 
		\end{overpic}
		
		\vspace{1em}  
		$\dtmor_0$
	\end{minipage}	
	\begin{minipage}[t]{.23\textwidth} 
		\centering
		\begin{overpic}{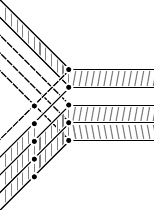} 
			\put (14,8) {\scalebox{0.7}{$A_u^{(1)}$}}
			\put (33,69) {\scalebox{0.7}{$A_v^{(1)}$}}
		\end{overpic}
		
		\vspace{1em}
		$\Aq \dtmor 1$\\Type I
	\end{minipage}\hspace{0.7em}
	\begin{minipage}[t]{.23\textwidth}
		\centering     
		\begin{overpic}{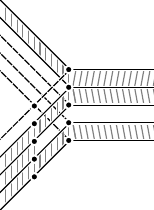} 
			\put (14,8) {\scalebox{0.7}{$A_u^{(2)}$}}
			\put (32,69) {\scalebox{0.7}{$A_v^{(2)}$}}
		\end{overpic} 
		
		\vspace{1em}
		$\Aq \dtmor 2$ \\Type II
	\end{minipage}\hspace{0.7em}
	\begin{minipage}[t]{.23\textwidth} 
		\centering
		\begin{overpic}{\figsdir/217.pdf}
			\put (50,22) {\scalebox{0.7}{$A_u^{(3)}$}}
			\put (31,70) {\scalebox{0.7}{$A_v^{(3)}$}}
		\end{overpic} 
		
		\vspace{1em}  
		$\Aq \dtmor 3$ \\Type III
	\end{minipage}

	\vspace{1em}

	Thus, in both subcases there is exactly one full-dimensional morphism of Type~I, Type~II, and Type~III, respectively.

	\subsection{Valency-2 limits: Case \{v2-nd4\}} \label{subsec-case-v2}
	Let $\vale w_0 = 2$. The setup parallels the valency-2 case in \cite[Section~\ref{I-sub-deg2nd3}]{dv20}: we say $\Aq \dtmor q$ has Base~I or Base~II according to whether its base tree is $T_{\varnothing}$ (subdividing the edge with two divalent vertices $u, v$) or $T_2$ (introducing a trivalent branch point), respectively. Base~I corresponds to a partition $[e_\alpha, e_\beta; e_\gamma, e_\delta]$ of the four non-dangling edges incident to $A$ with $|e_\alpha| = |e_\beta|$ and $|e_\gamma| = |e_\delta|$. Base~II corresponds to a transition between the relations above $t_2$ and $t_3$ via two local changes (each being a class splitting or merging). In both bases, $r_0(A) = 4 - \vale w_0 = 2$ by Lemma~\ref{lemma-above-w0}.
	
	By the no-return condition, the four non-dangling edges incident to $A$ in $G_0$ lie above the two edges $t_2, t_3$ of $T_0$ incident to $w_0$. There are two possible distributions:
	\begin{enumerate}[(A)]
		\item \textbf{Two edges above $t_2$, two edges above $t_3$:} Case~\{v2-nd4-t3\}.
		\item \textbf{Three edges above $t_2$, one edge above $t_3$:} Case~\{v2-nd4-t2\}.
	\end{enumerate}
	We label the edges of $G_0$ and $T_0$ such that $\min(k_2, k_3, k_4, k_5) = k_2$, the edges $e_3, e_4$ lie above $t_2$ with $k_3 \le k_4$, and $e_5$ lies above $t_3$.

	\subsubsection*{Configuration A: Two edges above $t_2$, two edges above $t_3$ (Case \{v2-nd4-t3\})}
	Here $e_2$ lies above $t_3$, so $e_3, e_4$ lie above $t_2$ and $e_2, e_5$ lie above $t_3$. Applying Lemma~\ref{lem-rphi-nd} to $A$ gives
	\[
	r_0(A) = 2 = 2 + 2|A| - (k_2 + k_3 + k_4 + k_5),
	\]
	whence $k_2 + k_3 + k_4 + k_5 = 2|A|$. Since $k_3 + k_4 \le |A|$ and $k_2 + k_5 \le |A|$, both must hold with equality:
	\[
	k_3 + k_4 = |A| \quad \text{and} \quad k_2 + k_5 = |A|.
	\]
	Because $k_2 = \min(k_i)$ and $k_3 \le k_4$, we obtain the chain of inequalities
	\[
	k_2 \le k_3 \le k_4 = |A| - k_3 \le |A| - k_2 = k_5.
	\]
	These inequalities determine which transitions occur for Base~I and Base~II. Depending on the equalities among $k_2, k_3, k_4$, there are three subcases:

	\paragraph*{Subcase \{v2-nd4-t3-k2=k4\} ($k_2 = k_4$):}
	Here $k_2 = k_3 = k_4 = k_5 = |A|/2$, so all four edges have identical cardinality. There are two morphisms with Base~I, corresponding to $[e_3, e_2; e_4, e_5]$ and $[e_3, e_5; e_4, e_2]$; these yield $\Aq \dtmor 1$ (Type~I) and $\Aq \dtmor 2$ (Type~II), respectively. There is a unique morphism with Base~II, where two classes merge above $v$, yielding $\Aq \dtmor 3$ (Type~III). See the diagrams below.

	\vspace{1.5em}
	\noindent \begin{minipage}[t]{.21\textwidth}
		\centering
		\begin{overpic}{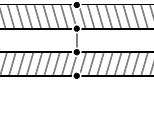}    
			\put (23,20) {\scalebox{0.9}{$t_2$}}
			\put (72,20) {\scalebox{0.9}{$t_3$}}
			\put (45,20) {\scalebox{0.9}{$w_0$}}
			
			\put (-12,69) {\scalebox{0.9}{$e_3$}}	
			\put (-12,36) {\scalebox{0.9}{$e_4$}}	
			\put (101,69) {\scalebox{0.9}{$e_2$}}
			\put (101,36) {\scalebox{0.9}{$e_5$}}
			
			\put (45,82) {\scalebox{1}{$A$}}
		\end{overpic} 
		
		\vspace{1em} 
		$\dtmor_0$
	\end{minipage} \hspace{1em} 
	\begin{minipage}[t]{.21\textwidth}
		\centering 
		\begin{overpic}[scale=1]{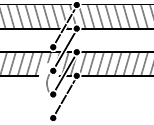}      
			\put (44,7) {\scalebox{0.9}{$t_1$}}   
			\put (30,-5) {\scalebox{0.9}{$u$}}
			\put (52,22) {\scalebox{0.9}{$v$}}
			\put (45,83) {\scalebox{0.9}{$A_1^{(1)}$}}
			\put (52,51) {\scalebox{0.77}{$A_2^{(1)}$}}
			
			\put (100,69) {\scalebox{0.9}{$e_2$}}
			\put (100,36) {\scalebox{0.9}{$e_5$}}	
		\end{overpic}
		
		\vspace{1em}
		$\Aq \dtmor 1$ \\ Type I
	\end{minipage}\hspace{1em}  
	\begin{minipage}[t]{.21\textwidth} 
		\centering
		\begin{overpic}[scale=1]{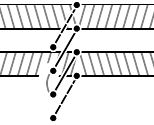}    
			\put (44,7) {\scalebox{0.9}{$t_1$}}   
			\put (30,-5) {\scalebox{0.9}{$u$}}
			\put (52,22) {\scalebox{0.9}{$v$}}
			\put (45,83) {\scalebox{0.9}{$A_2^{(2)}$}}
			\put (52,51) {\scalebox{0.77}{$A_1^{(2)}$}}
			
			\put (100,69) {\scalebox{0.9}{$e_5$}}
			\put (100,36) {\scalebox{0.9}{$e_2$}}		
		\end{overpic}  	
		
		\vspace{1em}
		$\Aq \dtmor 2$ \\ Type II
	\end{minipage}\hspace{1em}
	\begin{minipage}[t]{.21\textwidth} 
		\centering
		\begin{overpic}[scale=1]{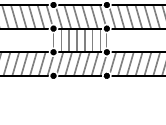}
			\put (45,19) {\scalebox{0.9}{$t_1$}}   
			\put (27,19) {\scalebox{0.9}{$u$}}
			\put (63,19) {\scalebox{0.9}{$v$}}
			\put (27,77) {\scalebox{0.9}{$A_2^{(3)}$}}
			\put (62,77) {\scalebox{0.9}{$A_1^{(3)}$}}
			
			\put (100,65) {\scalebox{0.9}{$e_2$}}
			\put (100,32) {\scalebox{0.9}{$e_5$}}	
		\end{overpic}
		
		\vspace{1em}
		$\Aq \dtmor 3$ \\ Type III
	\end{minipage}

	\vspace{1.5em}

	\paragraph*{Subcase \{v2-nd4-t3-k2=k3\} ($k_2 = k_3 < k_4$):}
	Here $k_2 = k_3 < k_4 = k_5$. There is a unique morphism with Base~I, corresponding to $[e_3, e_2; e_4, e_5]$, which gives $\Aq \dtmor 1$ (Type~I). There are two morphisms with Base~II: the class $e_4$ splits above $u$ because $k_4 > k_2$, yielding $\Aq \dtmor 2$ (Type~II); or two classes merge above $u$, yielding $\Aq \dtmor 3$ (Type~III). See the diagrams below.

	\vspace{1.5em}
	\noindent \begin{minipage}[t]{.21\textwidth}
		\centering
		\begin{overpic}{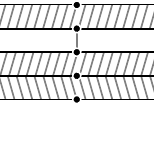}    
			\put (23,20) {\scalebox{0.9}{$t_2$}}
			\put (72,20) {\scalebox{0.9}{$t_3$}}
			\put (45,20) {\scalebox{0.9}{$w_0$}}
			
			\put (-12,83) {\scalebox{0.9}{$e_3$}}	
			\put (-12,45) {\scalebox{0.9}{$e_4$}}	
			\put (101,83) {\scalebox{0.9}{$e_2$}}
			\put (101,45) {\scalebox{0.9}{$e_5$}}
			
			\put (45,98) {\scalebox{1}{$A$}}
		\end{overpic} 
		
		\vspace{1em}
		$\dtmor_0$
	\end{minipage} \hspace{1em} 
	\begin{minipage}[t]{.21\textwidth}
		\centering 
		\begin{overpic}[scale=1]{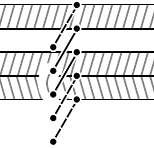}      
			\put (43,10) {\scalebox{0.9}{$t_1$}}   
			\put (31,-5) {\scalebox{0.9}{$u$}}
			\put (52,23) {\scalebox{0.9}{$v$}}
			\put (50,66) {\scalebox{0.8}{$A_2^{(1)}$}}
			\put (48,99) {\scalebox{0.9}{$A_1^{(1)}$}} 
		\end{overpic}
		
		\vspace{1em}
		$\Aq \dtmor 1$ \\ Type I
	\end{minipage}\hspace{1em}  
	\begin{minipage}[t]{.21\textwidth} 
		\centering
		\begin{overpic}[scale=1]{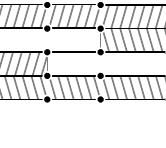}    
			\put (42,20) {\scalebox{0.9}{$t_1$}}   
			\put (25,20) {\scalebox{0.9}{$u$}}
			\put (61,20) {\scalebox{0.9}{$v$}}
			\put (23,61.5) {\scalebox{0.77}{$A_1^{(2)}$}}
			\put (60,91) {\scalebox{0.9}{$A_2^{(2)}$}}		
			\put (101,71) {\scalebox{0.9}{$e_5$}}
			\put (101,34) {\scalebox{0.9}{$e_2$}}
		\end{overpic}  	
		
		\vspace{1em}
		$\Aq \dtmor 2$ \\ Type II
	\end{minipage}\hspace{1em}
	\begin{minipage}[t]{.21\textwidth} 
		\centering
		\begin{overpic}[scale=1]{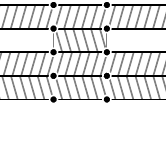}
			\put (45,20) {\scalebox{0.9}{$t_1$}}   
			\put (29,20) {\scalebox{0.9}{$u$}}
			\put (63,20) {\scalebox{0.9}{$v$}}
			\put (28,91) {\scalebox{0.9}{$A_2^{(3)}$}}
			\put (62,91) {\scalebox{0.9}{$A_1^{(3)}$}}
		\end{overpic}
		
		\vspace{1em}
		$\Aq \dtmor 3$ \\ Type III
	\end{minipage}

	\vspace{1.5em}

	\paragraph*{Subcase \{v2-nd4-t3-k2<k3\} ($k_2 < k_3 < k_4$):}
	Here $k_2 < k_3 < k_4 < k_5$, so all four cardinalities are strictly distinct. Since no two incident edges on opposite sides have equal cardinality, there are no morphisms with Base~I. There are three morphisms with Base~II: the class $e_4$ splits above $u$ because $k_4 > k_2$, yielding $\Aq \dtmor 1$ (Type~I); the class $e_3$ splits above $u$ because $k_3 > k_2$, yielding $\Aq \dtmor 2$ (Type~II); or two classes merge above $u$, yielding $\Aq \dtmor 3$ (Type~III). See the diagrams below.

	\vspace{1.5em}
	\noindent \begin{minipage}[t]{.21\textwidth}
		\centering
		\begin{overpic}{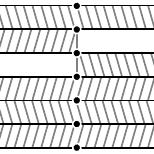}    
			\put (23,-6) {\scalebox{0.9}{$t_2$}}
			\put (72,-6) {\scalebox{0.9}{$t_3$}}
			\put (45,-6) {\scalebox{0.9}{$w_0$}}
			
			\put (101,87) {\scalebox{0.9}{$e_2$}}	 
			\put (101,32) {\scalebox{0.9}{$e_5$}}	
			\put (-12,84) {\scalebox{0.9}{$e_3$}}
			\put (-12,25) {\scalebox{0.9}{$e_4$}}
			
			\put (45,100) {\scalebox{1}{$A$}}
		\end{overpic} 
		
		\vspace{1em}
		$\dtmor_0$
	\end{minipage} \hspace{1em} 
	\begin{minipage}[t]{.21\textwidth} 
		\centering 
		\begin{overpic}[scale=1]{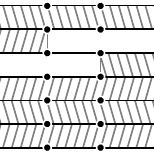}  
			\put (45,-5) {\scalebox{0.9}{$t_1$}}   
			\put (27,-5) {\scalebox{0.9}{$u$}}
			\put (63,-5) {\scalebox{0.9}{$v$}}
			\put (62,69.5) {\scalebox{0.8}{$A_2^{(1)}$}}
			\put (27,101) {\scalebox{0.9}{$A_1^{(1)}$}}   
		\end{overpic}
		
		\vspace{1em}
		$\Aq \dtmor 1$ \\ Type I
	\end{minipage}\hspace{1em}  
	\begin{minipage}[t]{.21\textwidth} 
		\centering
		\begin{overpic}[scale=1]{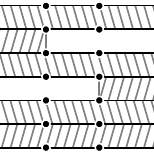}    
			\put (45,-5) {\scalebox{0.9}{$t_1$}}   
			\put (27,-5) {\scalebox{0.9}{$u$}}
			\put (63,-5) {\scalebox{0.9}{$v$}}
			\put (62,69.5) {\scalebox{0.8}{$A_2^{(2)}$}}
			\put (27,101) {\scalebox{0.9}{$A_1^{(2)}$}}		
		\end{overpic}  	
		
		\vspace{1em}
		$\Aq \dtmor 2$ \\ Type II
	\end{minipage}\hspace{1em}
	\begin{minipage}[t]{.21\textwidth} 
		\centering
		\begin{overpic}[scale=1]{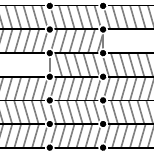}
			\put (45,-5) {\scalebox{0.9}{$t_1$}}   
			\put (27,-5) {\scalebox{0.9}{$u$}}
			\put (63,-5) {\scalebox{0.9}{$v$}}
			\put (27,101) {\scalebox{0.9}{$A_2^{(3)}$}}
			\put (62,101) {\scalebox{0.9}{$A_1^{(3)}$}}
		\end{overpic}
		
		\vspace{1em}
		$\Aq \dtmor 3$ \\ Type III
	\end{minipage}

	\vspace{1.5em}

	\subsubsection*{Configuration B: Three edges above $t_2$, one edge above $t_3$ (Case \{v2-nd4-t2\})}
	\paragraph*{Case \{v2-nd4-t2\} ($e_2$ above $t_2$):}
	Here $e_2, e_3, e_4$ lie above $t_2$, and $e_5$ lies above $t_3$. Since $e_5$ is the unique edge above $t_3$, we have $|A| = k_5$, which forces $k_2 \le k_3 \le k_4 < k_5$. Applying Lemma~\ref{lem-rphi-nd} to $A$ gives
	\[
	r_0(A) = 2 = 2 + 2|A| - (k_2 + k_3 + k_4 + k_5),
	\]
	whence $|A| = k_5 = k_2 + k_3 + k_4$.
	
	Since only one edge lies above $t_3$, no Base~I morphism exists. There are three morphisms with Base~II, corresponding to the three ways two classes can merge above $u$:
	$e_2$ and $e_3$ merge (yielding $\Aq \dtmor 1$ of Type~I);
	$e_2$ and $e_4$ merge (yielding $\Aq \dtmor 2$ of Type~II);
	or $e_3$ and $e_4$ merge (yielding $\Aq \dtmor 3$ of Type~III).
	In each case, a second merge above $v$ combines the result with the remaining class to produce $e_5$. See the diagrams below.

	\vspace{1.5em} 
	\noindent \begin{minipage}[t]{.21\textwidth}
		\centering
		\begin{overpic}{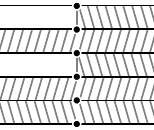}    
			\put (23,-6) {\scalebox{0.9}{$t_2$}}
			\put (72,-6) {\scalebox{0.9}{$t_3$}}
			\put (45,-6) {\scalebox{0.9}{$w_0$}}
			
			\put (-12,79) {\scalebox{0.9}{$e_2$}}
			\put (-12,56) {\scalebox{0.9}{$e_3$}}	
			\put (-12,20) {\scalebox{0.9}{$e_4$}}	
			\put (101,40) {\scalebox{0.9}{$e_5$}}
			
			\put (45,85) {\scalebox{1}{$A$}}
		\end{overpic} 
		
		\vspace{1em}
		$\dtmor_0$ 
	\end{minipage} \hspace{1em} 
	\begin{minipage}[t]{.21\textwidth} 
		\centering
		\begin{overpic}[scale=1]{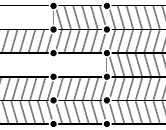}
			\put (45,-5) {\scalebox{0.9}{$t_1$}}   
			\put (27,-5) {\scalebox{0.9}{$u$}}
			\put (63,-5) {\scalebox{0.9}{$v$}}
			\put (27,80) {\scalebox{0.9}{$A_1^{(1)}$}}
			\put (62,80) {\scalebox{0.9}{$A_2^{(1)}$}} 
		\end{overpic}
		
		\vspace{1em}
		$\Aq \dtmor 1$ \\ Type I
	\end{minipage} \hspace{1em}  
	\begin{minipage}[t]{.21\textwidth} 
		\centering
		\begin{overpic}[scale=1]{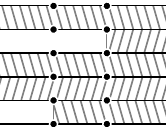}    
			\put (45,-5) {\scalebox{0.9}{$t_1$}}   
			\put (27,-5) {\scalebox{0.9}{$u$}}
			\put (63,-5) {\scalebox{0.9}{$v$}}
			\put (27,50.5) {\scalebox{0.77}{$A_1^{(2)}$}}
			\put (62,80) {\scalebox{0.9}{$A_2^{(2)}$}} 		
		\end{overpic}  	
		
		\vspace{1em}
		$\Aq \dtmor 2$ \\ Type II
	\end{minipage}\hspace{1em}
	\begin{minipage}[t]{.21\textwidth}
		\centering 
		\begin{overpic}[scale=1]{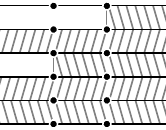}  
			\put (45,-5) {\scalebox{0.9}{$t_1$}}   
			\put (27,-5) {\scalebox{0.9}{$u$}}
			\put (63,-5) {\scalebox{0.9}{$v$}}
			\put (27,64.5) {\scalebox{0.77}{$A_2^{(3)}$}}
			\put (62,80) {\scalebox{0.9}{$A_1^{(3)}$}} 
		\end{overpic}
		
		\vspace{1em}
		$\Aq \dtmor 3$ \\ Type III 
	\end{minipage}

	\vspace{1.5em}

	Thus, in every subcase of $\vale w_0 = 2$, there is exactly one full-dimensional morphism of Type~I, Type~II, and Type~III specializing to $\dtmor_0$.

	\paragraph*{Conclusion of the proof of Proposition~\ref{prop-signed-mult}(2):}
	Combining the results of Subsections~\ref{subsec-case-v4}, \ref{subsec-case-v3}, and~\ref{subsec-case-v2}, we conclude that for every codimension-1 limit $\dtmor_0$ with non-trivalent combinatorial type $H_0$:
	\begin{itemize}
		\item If $\vale w_0 = 4$, there are $\min(k_2 - 1, \, |A| - k_5) + 1$ full-dimensional morphisms specializing to $\dtmor_0$ of Type~I, Type~II, and Type~III, respectively.
		\item If $\vale w_0 = 3$, there is exactly 1 full-dimensional morphism specializing to $\dtmor_0$ of Type~I, Type~II, and Type~III, respectively.
		\item If $\vale w_0 = 2$, there is exactly 1 full-dimensional morphism specializing to $\dtmor_0$ of Type~I, Type~II, and Type~III, respectively.
	\end{itemize}
	In all cases, the number of full-dimensional morphisms specializing to $\dtmor_0$ is the same for each of the three combinatorial types. By Lemma~\ref{lm:change-comb-type}, their signed multiplicities are equal. Therefore, the weighted sum of full-dimensional morphisms of each combinatorial type across the wall $\mH_0$ is balanced, establishing Proposition~\ref{prop-signed-mult}(2). This completes the proof of Theorem~\ref{thm} and Corollary~\ref{theorem-gonality}.

\appendix
	\section{Trivalent deformation} \label{appendix-trivalent-deformation}
	
	For the convenience of the reader we review the constructions necessary to prove the balancing condition for $\Pi$ when $\dtmor_0$ is  codimension-1 and $H(\dtmor_0)$ is trivalent. We just list the diagrams, for the proof that these are exhaustive see the last section of \cite{dv20}.
	
	\begin{itemize}[leftmargin=*]
		\item Case \{w3-r1-nd3-t2-(a=k4)\}: 
		
		\noindent\begin{minipage}[t]{.18\textwidth}
			\centering
			\begin{overpic}[scale=0.6]{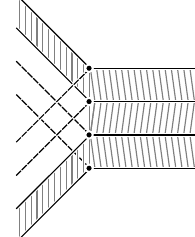}  
				\put (37,74) {\scalebox{0.75}{$A_0$}}
				\put (60,19) {\scalebox{0.75}{$t_4$}}
				\put (26,84) {\scalebox{0.75}{$t_2$}}
				\put (26,8) {\scalebox{0.75}{$t_3$}}
				\put (35,19) {\scalebox{0.75}{$w_0$}}
				
				\put (-2,93) {\scalebox{0.75}{$e_2$}}
				\put (-2,3) {\scalebox{0.75}{$e_3$}} 
				\put (83,48) {\scalebox{0.75}{$e_4$}}  
			\end{overpic}
			
			\vspace{0.5em}
			$\dtmor_0$
		\end{minipage}\hfill
		\begin{minipage}[t]{.18\textwidth}
			\centering
			\begin{overpic}[scale=0.6]{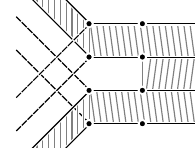}  
				\put (43,4) {\scalebox{0.75}{$v$}}
				\put (70,4) {\scalebox{0.75}{$u$}}
				\put (56,4) {\scalebox{0.75}{$t_1$}}
				\put (70,67) {\scalebox{0.75}{$A^{(1)}$}}  
			\end{overpic}
			
			\vspace{0.5em}
			$\Aq \dtmor 1$
		\end{minipage}\hfill
		\begin{minipage}[t]{.18\textwidth}
			\centering
			\begin{overpic}[scale=0.6]{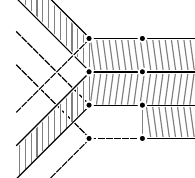} 
				\put (44,12) {\scalebox{0.75}{$v$}}
				\put (70,12) {\scalebox{0.75}{$u$}}
				\put (56,12) {\scalebox{0.75}{$t_1$}}
				\put (43,75) {\scalebox{0.75}{$A^{(2)}$}}
				
				\put (14,93) {\scalebox{0.75}{$e_2$}}
				\put (-2,6) {\scalebox{0.75}{$e_3$}} 
				\put (101,44) {\scalebox{0.75}{$e_4$}}   
			\end{overpic}
			
			\vspace{0.5em}
			$\Aq \dtmor 2$
		\end{minipage}\hfill
		\begin{minipage}[t]{.18\textwidth}
			\centering
			\begin{overpic}[scale=0.6]{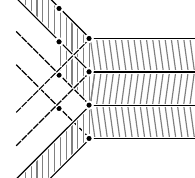} 
				\put (30,89) {\scalebox{0.75}{$u$}}
				\put (46,73) {\scalebox{0.75}{$v$}}
				\put (38,81) {\scalebox{0.75}{$t_1$}}
				\put (41,10) {\scalebox{0.75}{$A^{(3)}$}} 
			\end{overpic}
			
			\vspace{0.5em}
			$\Aq \dtmor 3$
		\end{minipage}\hfill
		\begin{minipage}[t]{.18\textwidth}
			\centering
			\begin{overpic}[scale=0.6]{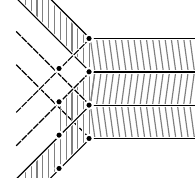}
				\put (31,0) {\scalebox{0.75}{$u$}}
				\put (45,12) {\scalebox{0.75}{$v$}}
				\put (39,6) {\scalebox{0.75}{$t_1$}}
				\put (45,74) {\scalebox{0.75}{$A^{(4)}$}} 
			\end{overpic}
			
			\vspace{0.5em}
			$\Aq \dtmor 4$
		\end{minipage} 
		
		\begin{align*}
		\begin{matrix}
		\bbv_2 			& \bbv_3 		& \bbv_4 				&& \Aq {\bbv_1} 1 	& \Aq {\bbv_1} 2 		 	& \Aq {\bbv_1} 3 	& \Aq {\bbv_1} 4\\
		\frac 1 {k_2} 	& 0		 		& 0 					&& \frac 1 {k_2}	& 0  					 	& \frac 1 {k_2+1}	& 0\\
		0			 	& \frac 1 {k_3} & 0 					&& \frac 1 {k_3}    & 0						 	& 0					& \frac 1 {k_3+1}\\
		0			 	& 0		 		& \frac 1 {k_2 + k_3} 	&& 0  				& \frac 1 {k_2 + k_3 - 1}	& 0					& 0\\
		a_i				& b_i			& c_i					&& c_i				& c_i						& a_i				& b_i\\
		\end{matrix}
		\end{align*}
		
		\[\Aq {\bbv_1} 1 + (k_1+k_2-1)\Aq {\bbv_1} 2 + (k_2+1)\Aq {\bbv_1} 3 + (k_3+1)\Aq {\bbv_1} 4 = (k_2+1)\bbv_2 + (k_3+1)\bbv_3 + (k_2+k_3)\bbv_4  \]
		
		
		\item Case \{w3-r1-nd3-t2-(a>k4)\}: 
		
		\noindent\begin{minipage}{.3\textwidth}
			\centering
			\begin{overpic}[scale=0.9]{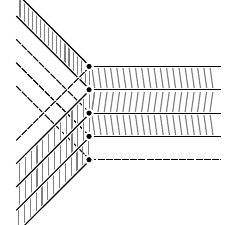} 
				\put (60,18) {\scalebox{1}{$t_4$}}
				\put (26,82) {\scalebox{1}{$t_2$}}
				\put (26,8) {\scalebox{1}{$t_3$}}
				\put (34,18) {\scalebox{1}{$w_0$}}
				\put (36,70) {\scalebox{1}{$A_0$}}
				
				\put (-2,79) {\scalebox{1}{$e_2$}}
				\put (-2,10) {\scalebox{1}{$e_3$}} 
				\put (92,51) {\scalebox{1}{$e_4$}}  
			\end{overpic}
			
			$\dtmor_0$
		\end{minipage}\hspace{0.1em}
		\begin{minipage}{.3\textwidth}
			\centering
			\begin{overpic}[scale=0.9]{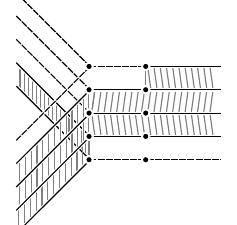} 
				\put (47,18) {\scalebox{1}{$t_1$}}
				\put (60,18) {\scalebox{1}{$u$}}
				\put (36,18) {\scalebox{1}{$v$}}
				\put (36,70) {\scalebox{1}{$A^{(1)}$}} 
			\end{overpic}
			
			$\Aq \dtmor 1$
		\end{minipage}\hspace{0.5em}
		\begin{minipage}{.3\textwidth}
			\centering
			\begin{overpic}[scale=0.9]{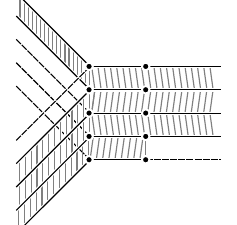} 
				\put (47,18) {\scalebox{1}{$t_1$}}
				\put (60,18) {\scalebox{1}{$u$}}
				\put (36,18) {\scalebox{1}{$v$}}
				\put (36,70) {\scalebox{1}{$A^{(2)}$}}  
			\end{overpic}
			
			$\Aq \dtmor 2$
		\end{minipage}\\
		
		\begin{align*}
		\begin{matrix}
		\bbv_4 			&& \Aq {\bbv_1} 1 	& \Aq {\bbv_1} 2\\
		0		 		&& 0 				&0 		  		\\
		0 	 			&& 0    			&0				\\
		\frac 1 {k_4}	&& \frac 1 {k_4 -1} &\frac 1 {k_4+1}\\
		a_i				&& a_i				& a_i			
		\end{matrix}
		\end{align*}
		
		\[(k_4-1)\Aq {\bbv_1} 1 + (k_4+1)\Aq {\bbv_1} 2  = 2k_4\bbv_4 \]


		\item Case \{w3-r1-nd3-t3\}: 
		
		\noindent		\begin{minipage}[t]{.3\textwidth}
			\centering 
			\begin{overpic}[scale=0.9]{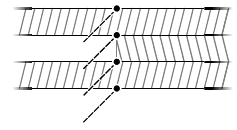}  
				\put (25,9) {\scalebox{1}{$t_3$}}
				\put (62,9) {\scalebox{1}{$t_4$}}
				\put (46,9) {\scalebox{1}{$w_0$}}
				\put (-5,44) {\scalebox{1}{$e_2$}}	
				\put (-5,21) {\scalebox{1}{$e_3$}}	
				\put (98,34) {\scalebox{1}{$e_4$}}	 
				\put (46,55) {\scalebox{1}{$A_0$}}
			\end{overpic}

			$\dtmor_0$
		\end{minipage}\hspace{0.5em}
		\begin{minipage}[t]{.3\textwidth} 
			\centering
			\begin{overpic}[scale=0.9]{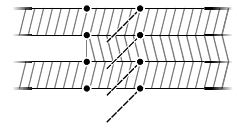}   
				\put (42,10) {\scalebox{1}{$t_1$}}
				\put (33,10) {\scalebox{1}{$u$}}
				\put (57,10) {\scalebox{1}{$v$}}
				\put (34,55) {\scalebox{1}{$A^{(1)}$}}
			\end{overpic}  
			
			$\Aq \dtmor 1$ 
		\end{minipage}\hspace{0.5em}
		\begin{minipage}[t]{.3\textwidth} 
			\centering
			\begin{overpic}[scale=0.9]{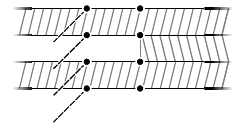}  
				\put (45,10) {\scalebox{1}{$t_1$}}
				\put (35,10) {\scalebox{1}{$v$}}
				\put (57,10) {\scalebox{1}{$u$}}
				\put (56,55) {\scalebox{1}{$A^{(2)}$}}
			\end{overpic}
			
			$\Aq \dtmor 2$
		\end{minipage}
		\begin{align*}
		\begin{matrix}
		\bbv_3 			&\bbv_4 				& \Aq {\bbv_1} 1 	  & \Aq {\bbv_1} 2 \\
		\frac 1 {k_2}	& 0						& 0				      & \frac 1 {k_2}  \\
		\frac 1 {k_3} 	& 0						& 0					  & \frac 1 {k_2}  \\
		0				& \frac 1 {k_2 + k_3}	& \frac 1 {k_2 + k_3} & 0			   \\
		a_i				& b_i					& a_i				  & b_i			
		\end{matrix}
		\end{align*}
		\[\Aq {\bbv_1} 1 + \Aq {\bbv_1} 2  = \bbv_3 + \bbv_4 \]

		
		\item Case \{w3-r1-nd2\}: 
		
		\noindent\begin{minipage}[t]{.3\textwidth} 
			\centering 
			\begin{overpic}[scale=0.9]{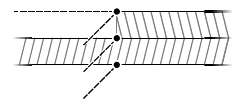} 
				\put (25,9) {\scalebox{1}{$t_3$}}
				\put (62,9) {\scalebox{1}{$t_4$}}
				\put (46,9) {\scalebox{1}{$w_0$}}
				\put (-2,20) {\scalebox{1}{$e$}}
				\put (98, 27) {\scalebox{1}{$e'$}}	
				\put (46,43) {\scalebox{1}{$A_0$}}
			\end{overpic}
			
			$\dtmor_0$
		\end{minipage}\hspace{0.5em}
		\begin{minipage}[t]{.3\textwidth}
			\centering
			\begin{overpic}[scale=0.9]{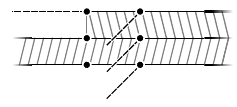}  
				\put (45,10) {\scalebox{1}{$t_1$}}
				\put (35,10) {\scalebox{1}{$u$}}
				\put (57,10) {\scalebox{1}{$v$}}
			\end{overpic}
			
			$\Aq \dtmor 1$
		\end{minipage}\hspace{0.5em}
		\begin{minipage}[t]{.30\textwidth} 
			\centering
			\begin{overpic}[scale=0.9]{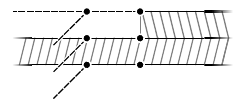} 
				\put (45,10) {\scalebox{1}{$t_1$}}
				\put (35,10) {\scalebox{1}{$v$}}
				\put (57,10) {\scalebox{1}{$u$}}
			\end{overpic}
			
			$\Aq \dtmor 2$
		\end{minipage}
		
		\begin{align*}
		\begin{matrix}
		\bbv_3 			&\bbv_4 				& \Aq {\bbv_1} 1 	  & \Aq {\bbv_1} 2 \\
		\frac 1 {k}		& \frac 1 {k+1}			& \frac 1 {k+1}		  & \frac 1 {k}  \\
		a_i				& b_i					& a_i				  & b_i			
		\end{matrix}
		\end{align*}
		\[\Aq {\bbv_1} 1 + \Aq {\bbv_1} 2  = \bbv_3 + \bbv_4 \]

		\item Case \{w2-r2-nd3-M-11\}:
		
		\noindent\begin{minipage}[t]{.23\textwidth}
			\centering 
			\begin{overpic}[scale=0.63]{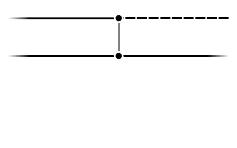}  
				\put (46,29) {\scalebox{1}{$w_0$}} 
				\put (-5,53) {\scalebox{1}{$e_1$}}	
				\put (-5,38) {\scalebox{1}{$e_2$}}	
				\put (98,38) {\scalebox{1}{$e_3$}}   
			\end{overpic}
			
			$\dtmor_0$
		\end{minipage}
		\begin{minipage}[t]{.23\textwidth}
			\centering 
			\begin{overpic}[scale=0.63]{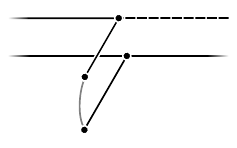}  
				\put (47,16) {\scalebox{1}{$t_1$}}
				\put (52,42) {\scalebox{1}{$A^{(1)}$}} 
				\put (39,4) {\scalebox{1}{$u$}} 
				\put (53,31) {\scalebox{1}{$v$}}   
			\end{overpic}
			
			$\Aq \dtmor 1$
		\end{minipage}
		\begin{minipage}[t]{.23\textwidth} 
			\centering
			\begin{overpic}[scale=0.63]{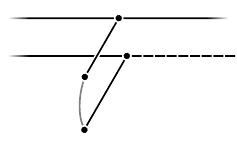}     
				\put (47,16) {\scalebox{1}{$t_1$}}
				\put (48,58) {\scalebox{1}{$A^{(2)}$}}
				\put (39,4) {\scalebox{1}{$u$}} 
				\put (53,31) {\scalebox{1}{$v$}} 
			\end{overpic} 
			
			$\Aq \dtmor 2$
		\end{minipage}
		\begin{minipage}[t]{.23\textwidth} 
			\centering
			\begin{overpic}[scale=0.63]{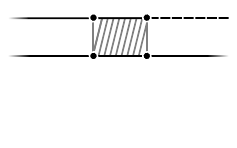}       
				\put (48,31) {\scalebox{1}{$t_1$}}
				\put (37,31) {\scalebox{1}{$u$}} 
				\put (59,31) {\scalebox{1}{$v$}} 
				\put (36,58) {\scalebox{1}{$A^{(3)}$}}
			\end{overpic}
			
			$\Aq \dtmor 3$
		\end{minipage}
		
		\begin{align*}
		\begin{matrix}
		\bbv_2 			&\bbv_3 				&& \Aq {\bbv_1} 1 	  & \Aq {\bbv_1} 2  & \Aq {\bbv_1} 3 \\
		1				& 0						&& 1				      & 0  			& 0\\
		1			 	& 0						&& 0					  & 1  			& 0\\
		0				& 1						&& 0					  & 0			& \frac 1 2	\\
		a_i				& a_i					&& 0					  & 0			& a_i	
		\end{matrix}
		\end{align*}
		\[\Aq {\bbv_1} 1 + \Aq {\bbv_1} 2 + 2\Aq {\bbv_1} 3  = \bbv_2 + \bbv_3. \]
		
		
		\item Case \{w2-r2-nd3-M-1k\}:
		
		\noindent\begin{minipage}[t]{.23\textwidth}
			\centering
			\begin{overpic}[scale=0.63]{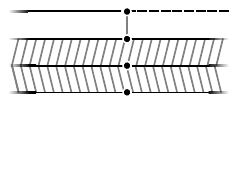}   
				\put (53,29) {\scalebox{1}{$w_0$}} 
				\put (-5,69) {\scalebox{1}{$e_1$}}	
				\put (-5,45) {\scalebox{1}{$e_2$}}	
				\put (98,45) {\scalebox{1}{$e_3$}}  
			\end{overpic} 
			$\dtmor_0$
		\end{minipage}%
		\begin{minipage}[t]{.23\textwidth}
			\centering 
			\begin{overpic}[scale=0.63]{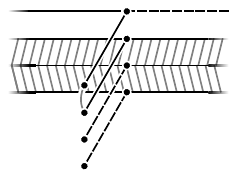} 
				\put (47,15) {\scalebox{1}{$t_1$}}
				\put (52,61) {\scalebox{1}{$A^{(1)}$}} 
				\put (39,2) {\scalebox{1}{$u$}} 
				\put (53,29) {\scalebox{1}{$v$}} 
			\end{overpic}
			
			$\Aq \dtmor 1$
		\end{minipage} 
		\begin{minipage}[t]{.23\textwidth} 
			\centering
			\begin{overpic}[scale=0.63]{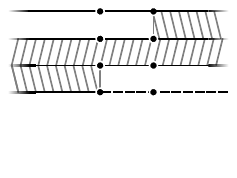}    
				\put (51,28) {\scalebox{1}{$t_1$}}
				\put (40,28) {\scalebox{1}{$u$}}
				\put (63,28) {\scalebox{1}{$v$}}
				\put (62,73) {\scalebox{1}{$A^{(2)}$}}
			\end{overpic}  
			
			$\Aq \dtmor 2$ 
		\end{minipage}
		\begin{minipage}[t]{.23\textwidth} 
			\centering
			\begin{overpic}[scale=0.63]{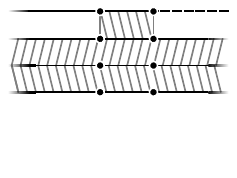}  
				\put (51,28) {\scalebox{1}{$t_1$}}
				\put (40,28) {\scalebox{1}{$u$}}
				\put (63,28) {\scalebox{1}{$v$}}
				\put (39,73) {\scalebox{1}{$A^{(3)}$}}
			\end{overpic}
			
			$\Aq \dtmor 3$ 
		\end{minipage}
		
		\begin{align*}
		\begin{matrix}
		\bbv_2 			&\bbv_3 				&& \Aq {\bbv_1} 1 	  	  & \Aq {\bbv_1} 2  & \Aq {\bbv_1} 3 \\
		1				& 0						&& 1				      & 1  				& 0\\
		\frac 1 k	 	& 0						&& 0					  & \frac 1 {k-1} 	& 0\\
		0				& \frac 1 k				&& 0					  & 0				& \frac 1 {k+1}	\\
		a_i				& a_i					&& 0					  & a_i				& a_i	
		\end{matrix}
		\end{align*}
		\[\Aq {\bbv_1} 1 + (k-1)\Aq {\bbv_1} 2 + (k+1)\Aq {\bbv_1} 3  = k \bbv_2 + k \bbv_3. \]

		
		\item Case \{w2-r2-nd3-M-kk\}: Assume that $k_1 \ge 2$ and $k_2 \ge 2$. Then $k_3 > k_1, k_2$, so Base~I is precluded since $|e_2|\not=|e_3|$ nor $|e_2|\not=|e_4|$. Base~II.2.1.M, Base~II.2.2.M, and Base~II.1.M determine $\Aq \dtmor 1$, $\Aq \dtmor 2$ and $\Aq \dtmor 3$, respectively. See figures and calculations below.
		
		
		\vspace{1em}
		
		\noindent \begin{minipage}[t]{.23\textwidth}
			\centering
			\begin{overpic}[scale=0.63]{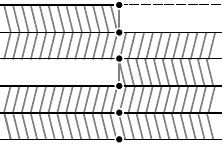}    
				\put (22,-6) {\scalebox{1}{$t_2$}}
				\put (75,-6) {\scalebox{1}{$t_3$}}
				\put (50,-6) {\scalebox{1}{$w_0$}}
				
				\put (-8,49) {\scalebox{1}{$e_1$}}	
				\put (-8,12) {\scalebox{1}{$e_2$}}	
				\put (102,23) {\scalebox{1}{$e_3$}}
				
				\put (50,67) {\scalebox{1}{$A_0$}}
			\end{overpic} 
			
			\vspace{1em}
			
			$\dtmor_0$
		\end{minipage}
		\begin{minipage}[t]{.23\textwidth}
			\centering 
			\begin{overpic}[scale=0.63]{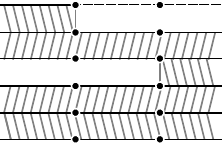}  
				\put (51,-6) {\scalebox{1}{$t_1$}}
				\put (32,-6) {\scalebox{1}{$u$}}
				\put (70,-6) {\scalebox{1}{$v$}}
				\put (62,52) {\scalebox{1}{$\Aq A 1$}} 
			\end{overpic}
			
			\vspace{1em}
			
			$\Aq \dtmor 1$ 
		\end{minipage}%
		\begin{minipage}[t]{.23\textwidth} 
			\centering
			\begin{overpic}[scale=0.63]{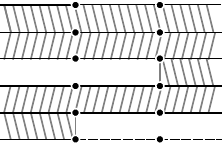}   
				\put (51,-6) {\scalebox{1}{$t_1$}}
				\put (32,-6) {\scalebox{1}{$u$}}
				\put (70,-6) {\scalebox{1}{$v$}}
				\put (65,66) {\scalebox{1}{$\Aq A 2$}}
			\end{overpic}  
			
			\vspace{1em}
			
			$\Aq \dtmor 2$
		\end{minipage}
		\begin{minipage}[t]{.23\textwidth} 
			\centering
			\begin{overpic}[scale=0.63]{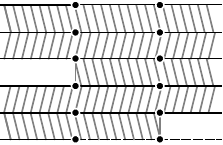} 
				\put (51,-6) {\scalebox{1}{$t_1$}}
				\put (32,-6) {\scalebox{1}{$u$}}
				\put (70,-6) {\scalebox{1}{$v$}}
				\put (30,66) {\scalebox{1}{$\Aq A 3$}}
			\end{overpic}
			
			\vspace{1em}
			
			$\Aq \dtmor 3$
		\end{minipage}
		
		\begin{align*}
		\begin{matrix}
		\bbv_2 			&\bbv_3 					&& \Aq {\bbv_1} 1 	  	  & \Aq {\bbv_1} 2  & \Aq {\bbv_1} 3 \\
		\frac 1 {k_1}	& 0							&& \frac 1 {k_1-1}		  & \frac 1 {k_1}  	& 0\\
		\frac 1 {k_2} 	& 0							&& \frac 1 {k_2}		  & \frac 1 {k_2-1}	& 0\\
		0				& \frac 1 {k_1 + k_2 - 1}	&& 0					  & 0				& \frac 1 {k_1 + k_2}	\\
		a_i				& a_i						&& a_i					  & a_i				& a_i	
		\end{matrix}
		\end{align*}
		\[(k_1-1)\Aq {\bbv_1} 1 + (k_2-1)\Aq {\bbv_1} 2 + (k_1+k_2)\Aq {\bbv_1} 3  = (k_1+k_2-1) \bbv_2 + (k_1+k_2-1)  \bbv_3. \]

		
		\item Case \{w2-r2-nd3-P\}: 
		
		\noindent  \begin{minipage}[t]{.23\textwidth}
			\centering
			\begin{overpic}[scale=0.63]{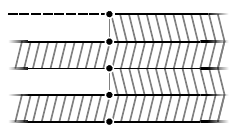}  
				\put (27,-4) {\scalebox{1}{$t_2$}}
				\put (62,-4) {\scalebox{1}{$t_3$}}
				\put (43,-4) {\scalebox{1}{$w_0$}}
				\put (-1,30) {\scalebox{1}{$e_1$}}	
				\put (-1,8)  {\scalebox{1}{$e_2$}}	
				\put (98,25) {\scalebox{1}{$e_3$}}	 
				\put (43,52) {\scalebox{1}{$A_0$}}
			\end{overpic} 
			\vspace{1em}
			
			$\dtmor_0$
		\end{minipage}%
		\begin{minipage}[t]{.23\textwidth}
			\centering 
			\begin{overpic}[scale=0.63]{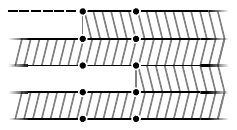}  
				\put (44,-3) {\scalebox{1}{$t_1$}}
				\put (33,-3) {\scalebox{1}{$u$}}
				\put (55,-3) {\scalebox{1}{$v$}}
				\put (54,54) {\scalebox{1}{$A^{(1)}$}} 	
			\end{overpic}\vspace{1em}
			
			$\Aq \dtmor 1$
		\end{minipage}
		\begin{minipage}[t]{.23\textwidth} 
			\centering
			\begin{overpic}[scale=0.63]{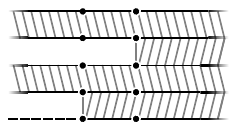}   
				\put (44,-3) {\scalebox{1}{$t_1$}}
				\put (33,-3) {\scalebox{1}{$u$}}
				\put (55,-3) {\scalebox{1}{$v$}}
				\put (32,54) {\scalebox{1}{$A^{(2)}$}}
			\end{overpic}  
			
			\vspace{1em}
			
			$\Aq \dtmor 2$
		\end{minipage}
		\begin{minipage}[t]{.23\textwidth} 
			\centering
			\begin{overpic}[scale=0.63]{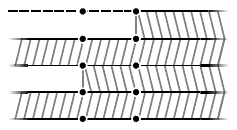} 
				\put (44,-3) {\scalebox{1}{$t_1$}}
				\put (33,-3) {\scalebox{1}{$u$}}
				\put (55,-3) {\scalebox{1}{$v$}}
				\put (54,54) {\scalebox{1}{$A^{(3)}$}}
			\end{overpic}
			
			\vspace{1em}
			
			$\Aq \dtmor 3$
		\end{minipage}
		
		\begin{align*}
		\begin{matrix}
		\bbv_2 			&\bbv_3 					&& \Aq {\bbv_1} 1 	  	  & \Aq {\bbv_1} 2  & \Aq {\bbv_1} 3 \\
		\frac 1 {k_1}	& 0							&& \frac 1 {k_1+1}		  & \frac 1 {k_1}  	& 0\\
		\frac 1 {k_2} 	& 0							&& \frac 1 {k_2}		  & \frac 1 {k_2+1}	& 0\\
		0				& \frac 1 {k_1 + k_2 + 1}	&& 0					  & 0				& \frac 1 {k_1 + k_2}	\\
		a_i				& a_i						&& a_i					  & a_i				& a_i	
		\end{matrix}
		\end{align*}
		\[(k_1+1)\Aq {\bbv_1} 1 + (k_2+1)\Aq {\bbv_1} 2 + (k_1+k_2)\Aq {\bbv_1} 3  = (k_1+k_2+1) \bbv_2 + (k_1+k_2+1)  \bbv_3. \]
		
		
		\item Case \{w2-r1\}: Assume that $r_0(A_0) = 1$. Then $\vale A_0 = 3$. Since $A_0$ satisfies no-return, let $e_2$, $e_3$ be non-dangling edges in $\Neigh {A_0}$, above $t_2$, $t_3$, respectively. Let $e_1$ be the remaining edge of $\Neigh {A_0}$. We may assume without loss of generality that~$e_1$ is above~$t_2$. By the refinement property $k_1 + k_2 = |A_0| = k_3$. Let $\tilde A$ be the unique vertex of $\ndGqAo$ with $\Aq r q(\tilde A) = 1$. If $\tilde A$ is above $u$ (resp. $v$), then the vertices of $\ndGqAo$ above $v$ (resp. $u$) belong to the Case~(r0-nd2) of the local properties (Case~(r0-nd3) would contradict that $v$ (resp. $u$) is divalent). These facts and \cite[Lemma~\ref{I-lemma-vertices-in-GqA0}]{dv20} determine the classes.

		\item Case \{w2-r1-nd3\}: Assume that $\nddeg A_0$ is 3. 
		\vspace{1em}
		
		\noindent    \begin{minipage}[t]{.30\textwidth}
			\centering 
			\begin{overpic}[scale=0.9]{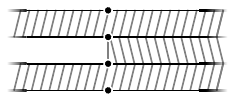}  
				\put (27,-5) {\scalebox{1}{$t_2$}}
				\put (62,-5) {\scalebox{1}{$t_3$}}
				\put (42,-5) {\scalebox{1}{$w_0$}}
				\put (-4,31) {\scalebox{1}{$e_1$}}	
				\put (-4,7) {\scalebox{1}{$e_2$}}	
				\put (96,20) {\scalebox{1}{$e_3$}}	 
				\put (43,42) {\scalebox{1}{$A_0$}}
			\end{overpic}
			
			\vspace{1em}
			
			local part around $A_0$
			\begin{align*}
			&\sigma_0(J_{A_0},2) = \frac {c(e_1)} {k_1}  + \frac {c(e_2)} {k_2}  \\
			&\sigma_0(J_{A_0},3) = \frac {c(e_3)} {k_1 + k_2 } 
			\end{align*}
		\end{minipage}\hspace{0.5em}
		\begin{minipage}[t]{.30\textwidth} 
			\centering
			\begin{overpic}[scale=0.9]{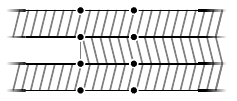} 
				\put (43,-4) {\scalebox{1}{$t_1$}}
				\put (32,-4) {\scalebox{1}{$u$}}
				\put (55,-4) {\scalebox{1}{$v$}}
				\put (31,42) {\scalebox{1}{$A^{(q)}$}}
			\end{overpic}

			\vspace{1em}
			
			$\Aq \dtmor 1 (\tilde A) = u$
			\begin{align*}
			&|e'| = k_3 \\ 
			&\Aq \sigma 1(J_{A_0},1) = \frac {c(e_3)} {k_1 + k_2} 
			\end{align*}
		\end{minipage}\hspace{0.5em}
		\begin{minipage}[t]{.30\textwidth} 
			\centering
			\begin{overpic}[scale=0.9]{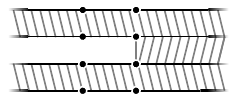} 
				\put (43,-4) {\scalebox{1}{$t_1$}}
				\put (32,-4) {\scalebox{1}{$u$}}
				\put (55,-4) {\scalebox{1}{$v$}}
				\put (54,42) {\scalebox{1}{$A^{(q)}$}}
			\end{overpic}  
			
			\vspace{1em}
			
			$\Aq \dtmor 2 (\tilde A) = v$
			\begin{align*}
			&|e'| = k_1  \quad |e''| = k_2\\
			&\Aq \sigma 2(J_{A_0},1) = \dfrac {c(e_1)} {k_1} + \dfrac {c(e_2)} {k_2}  
			\end{align*}
		\end{minipage}\hspace{1em}
		\vspace{1em} 
		\\
		Thus, \[\Aq \sigma 1(J_{A_0},1) + \Aq \sigma 2(J_{A_0},1) =  \sigma_0(J_{A_0}, 2) + \sigma_0(J_{A_0}, 3).\]
		
		\item Case \{w2-r1-nd2\}: Assume that $\nddeg A_0$ is 2. We may assume that $e_1$ is dangling. So $k_3 = k_2 + 1$. Let $h = h(e_2) = h(e_3)$.
		
		\vspace{1em}
		
		\begin{minipage}[t]{.30\textwidth}
			\centering 
			\begin{overpic}[scale=0.9]{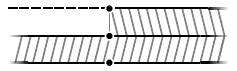}  
				\put (25,-5) {\scalebox{1}{$t_2$}}
				\put (63,-5) {\scalebox{1}{$t_3$}}
				\put (42,-5) {\scalebox{1}{$w_0$}}
				\put (-1,7) {\scalebox{1}{$e_2$}}	
				\put (97,13) {\scalebox{1}{$e_3$}}	 
				\put (43,30) {\scalebox{1}{$A_0$}}
			\end{overpic}
			
			\vspace{1em}
			
			local part around $A_0$
			\begin{align*}
			&\sigma_0(J_{A_0},2) = \dfrac {c_h} {k_1}\\
			&\sigma_0(J_{A_0},3) = \dfrac {c_h} {k_1 + 1} 
			\end{align*}
		\end{minipage}\hspace{0.5em}
		\begin{minipage}[t]{.3\textwidth} 
			\centering
			\begin{overpic}[scale=0.9]{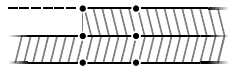} 
				\put (43,-4) {\scalebox{1}{$t_1$}}
				\put (32,-4) {\scalebox{1}{$u$}}
				\put (55,-4) {\scalebox{1}{$v$}}
				\put (31,30) {\scalebox{1}{$A^{(q)}$}}
			\end{overpic}
			
			\vspace{1em}
			
			$\Aq \dtmor 1 (\tilde A) = u$
			\begin{align*}
			&|e'| = k_2 + 1\\ 
			&\Aq \sigma 1(J_{A_0},1)= \dfrac {c_h} {k_2 + 1}  
			\end{align*} 
		\end{minipage}\hspace{0.5em} 
		\begin{minipage}[t]{.3\textwidth} 
			\centering
			\begin{overpic}[scale=0.9]{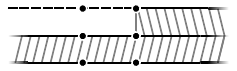}  
				\put (43,-4) {\scalebox{1}{$t_1$}}
				\put (32,-4) {\scalebox{1}{$u$}}
				\put (55,-4) {\scalebox{1}{$v$}}
				\put (54,30) {\scalebox{1}{$A^{(q)}$}}
			\end{overpic}  
			
			\vspace{1em}
			
			$\Aq \dtmor 2 (\tilde A) = v$
			\begin{align*}
			&|e'| = k_2\\
			&\Aq \sigma 2(J_{A_0},1) = \dfrac {c_h} {k_2} 
			\end{align*}
		\end{minipage}
		\vspace{1em} 
		\\
		Thus, \[\Aq \sigma 1(J_{A_0},1) + \Aq \sigma 2(J_{A_0},1) = \sigma_0(J_{A_0}, 2) + \sigma_0(J_{A_0}, 3).\]
		
		\item Proof of \eqref{I-eq-star} for case \{w2-r1\}: There is another vertex $B_0$ above $w_0$ with $r_0(B_0) = 1$. The previous analysis holds for $B_0$, with notation entirely analogous. Note that in $\Aq \dtmor q$, $\Aq \dtmor q (\tilde A) \ne \Aq \dtmor q (\tilde B)$ because $\ch u = \ch v = 1$. So $\Aq \dtmor q (\tilde A)$ determines the \DTmor, and it still holds that \[\Aq \sigma 1(J_{B_0},1) + \Aq \sigma 2(J_{B_0},1) =  \sigma_0(J_{B_0}, 2) + \sigma_0(J_{B_0}, 3).\] This gives the following calculation, which verifies Equation~\eqref{I-eq-star}: 
		\begin{align*}
		\Aq c1 &= \Aq \sigma 1(J_{A_0},1) + \Aq \sigma 1(J_{B_0},1) + s,
		&
		c^{(2)} &= \Aq \sigma 2(J_{A_0},1) + \Aq \sigma 2(J_{B_0},1) + s,
		\end{align*}
		\begin{align} \label{eq-10}
		c^{(1)} + c^{(2)} &= \sigma_0(2) + \sigma_0(3) = 0 + 0 = 0.
		\end{align}
		
	\end{itemize}


\begin{thebibliography}{ABBR15}

\bibitem[ABBR15]{abbr15}
O.~Amini, M.~Baker, E.~Brugall{\'e}, and J.~Rabinoff.
\newblock Lifting harmonic morphisms {I}: Metrized complexes and {B}erkovich skeleta.
\newblock \emph{Res. Math. Sci.}, 2(1):Art.~7, 2015.
\newblock \href{https://arxiv.org/abs/1303.4812}{arXiv:1303.4812}.

\bibitem[ACP15]{acp15}
D.~Abramovich, L.~Caporaso, and S.~Payne.
\newblock The tropicalization of the moduli space of curves.
\newblock \emph{Ann. Sci. \'Ec. Norm. Sup\'er. (4)}, 48(4):765--809, 2015.
\newblock \href{https://arxiv.org/abs/1212.0373}{arXiv:1212.0373}.

\bibitem[BBM11]{bbm11}
B.~Bertrand, E.~Brugall{\'e}, and G.~Mikhalkin.
\newblock Tropical open {H}urwitz numbers.
\newblock \emph{Rend. Semin. Mat. Univ. Padova}, 125:157--171, 2011.
\newblock \href{https://arxiv.org/abs/1005.4628}{arXiv:1005.4628}.

\bibitem[BMV11]{bmv11}
S.~Brannetti, M.~Melo, and F.~Viviani.
\newblock On the tropical {T}orelli map.
\newblock \emph{Adv. Math.}, 226(3):2546--2586, 2011.

\bibitem[BN09]{bn09}
M.~Baker and S.~Norine.
\newblock Harmonic morphisms and hyperelliptic graphs.
\newblock \emph{Int. Math. Res. Not. IMRN}, 2009(15):2914--2955, 2009.

\bibitem[Cap12]{cap12}
L.~Caporaso.
\newblock Geometry of tropical moduli spaces and linkage of graphs.
\newblock \emph{J. Combin. Theory Ser. A}, 119(3):579--598, 2012.

\bibitem[Cap14]{cap14}
L.~Caporaso.
\newblock Gonality of algebraic curves and graphs.
\newblock In \emph{Algebraic and Complex Geometry}, volume~17 of \emph{Springer Proc. Math. Stat.}, pages 77--108. Springer, 2014.
\newblock \href{https://arxiv.org/abs/1201.6246}{arXiv:1201.6246}.

\bibitem[Cas89]{cas89}
G.~Castelnuovo.
\newblock Numero delle serie lineari $\mathfrak{g}^1_n$ contenute sopra una curva di genere $p$.
\newblock \emph{Rend. Circ. Mat. Palermo}, 3:149--179, 1889.

\bibitem[CD18]{cd18}
F.~Cools and J.~Draisma.
\newblock On metric graphs with prescribed gonality.
\newblock \emph{J. Combin. Theory Ser. A}, 156:1--21, 2018.
\newblock \href{https://arxiv.org/abs/1602.05542}{arXiv:1602.05542}.

\bibitem[Cha13]{cha13}
M.~Chan.
\newblock Tropical hyperelliptic curves.
\newblock \emph{J. Algebraic Combin.}, 37(2):331--359, 2013.
\newblock \href{https://arxiv.org/abs/1110.0273}{arXiv:1110.0273}.

\bibitem[CMR16]{cmr16}
R.~Cavalieri, H.~Markwig, and D.~Ranganathan.
\newblock Tropicalizing the space of admissible covers.
\newblock \emph{Math. Ann.}, 364(3--4):1275--1313, 2016.

\bibitem[DV20]{dv20}
J.~Draisma and A.~Vargas.
\newblock Catalan-many tropical morphisms to trees; {P}art~{I}: {C}onstructions.
\newblock \emph{J. Symbolic Comput.}, 104:580--629, 2021.

\bibitem[EH87]{eh87}
D.~Eisenbud and J.~Harris.
\newblock Irreducibility and monodromy of some families of linear series.
\newblock \emph{Ann. Sci. \'Ec. Norm. Sup\'er. (4)}, 20(1):65--87, 1987.

\bibitem[GH80]{gh80}
P.~Griffiths and J.~Harris.
\newblock On the variety of special linear systems on a general algebraic curve.
\newblock \emph{Duke Math. J.}, 47(1):233--272, 1980.

\bibitem[Kem71]{kem71}
G.~Kempf.
\newblock Schubert methods with an application to algebraic curves.
\newblock \emph{Stichting Mathematisch Centrum, Zuivere Wiskunde}, ZW 6/71, 1971.

\bibitem[KL72]{kl72}
S.~L.~Kleiman and D.~Laksov.
\newblock On the existence of special divisors.
\newblock \emph{Amer. J. Math.}, 94(2):431--436, 1972.

\bibitem[Mik07]{mik07}
G.~Mikhalkin.
\newblock Tropical geometry and its applications.
\newblock In \emph{Proceedings of the International Congress of Mathematicians (Madrid, 2006)}, volume~II, pages 827--852. Eur. Math. Soc., Z\"urich, 2007.
\newblock \href{https://arxiv.org/abs/math/0601041}{arXiv:math/0601041}.

\bibitem[Rob24]{rob24}
D.~Robayo.
\newblock A combinatorial extension of tropical cycles.
\newblock \href{https://arxiv.org/abs/2410.23474}{arXiv:2410.23474}, 2024.

\bibitem[Rob25]{rob25}
D.~Robayo.
\newblock Tropical cycles of discrete admissible covers.
\newblock \href{https://arxiv.org/abs/2501.16074}{arXiv:2501.16074}, 2025.

\bibitem[Sta15]{sta15}
R.~P.~Stanley.
\newblock \emph{Catalan Numbers}.
\newblock Cambridge University Press, 2015.

\end{thebibliography}
\end{document}